\documentclass[11pt]{amsart} 

\RequirePackage{doi}
\usepackage{hyperref}

\usepackage{xfrac}
\newcommand{\half}{{\sfrac{1\!}{2}}}

\usepackage{amsmath, amssymb, amsfonts, amsbsy, amsthm, latexsym, stmaryrd, mathtools, enumerate
}
\usepackage[marginpar=2cm,ignoremp,margin=3cm]{geometry}
\usepackage{graphicx,float}
\usepackage[mathscr]{eucal}
\usepackage{wrapfig}
\usepackage{todonotes}
\allowdisplaybreaks

\usepackage{tikz}
\usepackage{tikz-cd}
\usetikzlibrary{calc,positioning}
\usetikzlibrary{matrix}
\usetikzlibrary{decorations.pathreplacing}

\tikzstyle{overbrace text style}=[font=\tiny, above, pos=.5, yshift=3mm]
\tikzstyle{overbrace style}=[decorate,decoration={brace,raise=2mm,amplitude=3pt}]
\tikzstyle{underbrace style}=[decorate,decoration={brace,raise=2mm,amplitude=5pt,mirror},color=gray]
\tikzstyle{underbrace text style}=[font=\tiny, below, pos=.5, yshift=-3mm]

\tikzset{
	bracket/.default=0.1cm,
	bracket/.style={
		to path={
			(\tikztostart) -- ([yshift=#1]\tikztostart) -- ([yshift=#1]\tikztotarget) \tikztonodes -- (\tikztotarget)
		}
	},
	ubracket/.default=0.1cm,
	ubracket/.style={
		to path={
			(\tikztostart) -- ([yshift=-#1]\tikztostart) -- ([yshift=-#1]\tikztotarget) \tikztonodes -- (\tikztotarget)
		}
	}
}

\def\equationautorefname~#1\null{Equation~(#1)\null}

\usepackage[mathscr]{eucal}
\usepackage{url}
\usepackage{shuffle}
\DeclareFontFamily{U}{shuffle}{}
\DeclareFontShape{U}{shuffle}{m}{n}{ <-8>shuffle7 <8->shuffle10}{}

\def\sh{\shuffle}

\usepackage{thmtools}

\declaretheorem[
style=plain,
name=Theorem,
numberwithin=section,
refname={Theorem,Theorems},
Refname={Theorem,Theorems}
]{thm}

\declaretheorem[
style=plain,
name=Lemma,
numberlike=thm,
refname={Lemma,Lemmas},
Refname={Lemma,Lemmas}
]{lem}
\declaretheorem[
style=plain,
name=Proposition,
numberlike=thm,
refname={Proposition,Propositions},
Refname={Proposition,Propositions}
]{prop}
\declaretheorem[
style=plain,
name=Corollary,
numberlike=thm,
refname={Corollary,Corollaries},
Refname={Corollary,Corollaries}
]{cor}

\declaretheorem[
style=definition,
name=Properties,
numberlike=thm,
refname={Properties,Properties},
Refname={Properties,Properties},
%qed={$\lhd$}
]{properties}

\declaretheorem[
style=plain,
name=Conjecture,
numberlike=thm,
refname={Conjecture,Conjectures},
Refname={Conjecture,Conjectures},
%qed={$\lhd$}
]{conj}
\declaretheorem[
style=definition,
name=Remark,
numberlike=thm,
refname={Remark,Remarks},
Refname={Remark,Remarks},
%qed={$\lhd$}
]{remark}

\numberwithin{equation}{section}

\def\decon{{\mathrm{decon}}}
\def\C{{\mathbb{C}}}
\def\Z{{\mathbb{Z}}}
\def\Q{{\mathbb{Q}}}
\def\P{{\mathbb{P}}}

\DeclareMathOperator{\SL}{SL}

\def\degb{{\deg_\mathcal{B}}}
\def\zm{\zeta^{\mathfrak{m}}}

\def\z{\zeta}
\DeclareMathOperator{\I}{I}

\DeclareMathOperator{\im}{im}
\def\il{{\text{I}^{\mathfrak{l}}}}
\def\imot{{\I^\mathfrak{m}}}

\def\zl{{\zeta^{\mathfrak{l}}}}
\def\ia{{\text{I}^{\mathfrak{a}}}}

\def\ibl{{\text{I}^\mathfrak{bl}}}

\def\bg{{\mathfrak{bg}}}
\def\rbg{{\mathfrak{rbg}}}
\def\gmot{{\mathfrak{g}^\mathfrak{m}}}

\DeclareMathOperator{\gr}{gr}
\def\qpoly{{\Q\langle e_0,e_1\rangle}}

\DeclareMathOperator{\Sh}{Sh}

\DeclareMathOperator{\Lie}{Lie}
\DeclareMathOperator{\id}{id}

\DeclareMathOperator{\sgn}{sgn}

\DeclareMathOperator{\per}{per}
\newcommand{\eps}{\varepsilon}
\DeclareMathOperator{\Tr}{Tr}

\def\proj{{\P^1\setminus\{0,1,\infty\}}}

\newcommand{\lmot}{\mathfrak{l}}
\newcommand{\mot}{\mathfrak{m}}
\newcommand{\ii}{\mathrm{i}}

\newcommand{\ev}{\mathrm{ev}}
\newcommand{\od}{\mathrm{od}}
\newcommand{\alt}{\mathrm{alt}}

\newcommand{\abs}[1]{\left|#1\right|}
\DeclareMathOperator{\Li}{Li}

\usepackage[normalem]{ulem}
\renewcommand{\vec}[1]{\mathbf{\uline{#1}}}

\DeclareMathOperator{\Vect}{Vec}
\DeclareMathOperator{\MT}{MT}
\DeclareMathOperator{\Spec}{Spec}
\DeclareMathOperator{\Isom}{Isom}
\DeclareMathOperator{\Gr}{Gr}

% better spacing in pmod
% https://tex.stackexchange.com/a/39222/80070
% user egreg
%\makeatletter
%\let\@@pmod\pmod
%\DeclareRobustCommand{\pmod}{\@ifstar\@pmods\@@pmod}
%\def\@pmods#1{\mkern4mu({\operator@font mod}\mkern 6mu#1)}
%\makeatother

\newcommand{\diffrbg}[1]{\frac{\partial^4{#1}}{\partial x_1^4}+\frac{\partial^4{#1}}{\partial x_2^4}+\frac{\partial^4{#1}}{\partial x_3^4}-2\frac{\partial^4{#1}}{\partial x_1^2\partial x_2^2}-2\frac{\partial^4{#1}}{\partial x_2^2\partial x_3^2}-2\frac{\partial^4{#1}}{\partial x_3^2\partial x_1^2}}

\DeclareMathOperator{\D}{D}

% consistent overline height
\let\overlineO\overline
\renewcommand{\overline}[1]{\overlineO{\mathclap{\phantom{I}}#1}}

\makeatletter
\@namedef{subjclassname@2020}{%
	\textup{2020} Mathematics Subject Classification}
\makeatother

\newcommand{\shortlong}[2]{#2}

\begin{document}

\title[Evaluation of \( \zeta(2,\ldots,2,4,2,\ldots,2) \) and period polynomial relations]{Evaluation of \( \zeta(2,\ldots,2,4,2,\ldots,2) \) and \\ period polynomial relations}
%{Evaluation of the multiple zeta values \( \zeta(2,\ldots,2,4,2,\ldots,2) \) via double zeta values, with applications to period polynomial relations and to multiple \( t \) values}

\author{Steven Charlton}
%\address{Fachbereich Mathematik (AZ), Universit\"at Hamburg, Bundesstra\textup{\ss}e 55, 20146 Hamburg, Germany}
%\email{steven.charlton@uni-hamburg.de}
\address{Max Planck Institute for Mathematics, Vivatsgasse 7, Bonn 53111, Germany}
\email{charlton@mpim-bonn.mpg.de}

\author{Adam Keilthy}
\address{Department of Mathematical Sciences - Chalmers University of Technology and University of Gothenburg
	SE-412 96 Gothenburg, Sweden}
\email{keilthy@chalmers.se}

\keywords{%
	Multiple zeta values, motivic multiple zeta values, period polynomials, block decomposition, block filtration, multiple zeta star values, alternating multiple zeta values, Euler sums,  generalised 2-1 theorem, parity theorem, Galois descent, multiple $t$ values, special values, generating series}
\subjclass[2020]{%
	Primary 11M32, 11M41, 11G99}

\date{\today}

\begin{abstract}
In studying the depth filtration on multiple zeta values, difficulties quickly arise due to a disparity between it and the coradical filtration \cite{BrownDepthGraded21}. In particular, there are additional relations in the depth graded algebra coming from period polynomials of cusp forms for $\SL_2(\Z)$. In contrast, a simple combinatorial filtration, the block filtration \cite{CharltonBlock21,KeilthyBlock21} is known to agree with the coradical filtration, and so there is no similar defect in the associated graded. However, via an explicit evaluation of \(\zeta(2,\ldots,2,4,2,\ldots,2)\) as a polynomial in double zeta values, we derive these period polynomial relations as a consequence of an intrinsic symmetry of block graded multiple zeta values in block degree 2. In deriving this evaluation, we find a Galois descent of certain alternating double zeta values to classical double zeta values, which we then apply to give an evaluation of the multiple $t$ values \cite{HoffmanOdd19} \(t(2\ell,2k)\) in terms of classical double zeta values.
\end{abstract}

\maketitle

\setcounter{tocdepth}{1}
\tableofcontents
\setcounter{tocdepth}{2}

\section{Introduction}

For any tuple $(k_1,k_2,\ldots,k_r)$ of positive integers with $k_r\geq 2$, we may define a multivariable analogue of the Riemann zeta values, called a multiple zeta value (MZV) of weight $k_1+\cdots+k_r$ and depth $r$, by
\[\z(k_1,k_2,\ldots,k_r)\coloneqq\sum_{0<n_1<n_2<\cdots<n_r}\frac{1}{n_1^{k_1}n_2^{k_2}\cdots n_r^{k_r}} \,.\]
These numbers arise naturally in many areas of mathematics and mathematical physics, including in connection to associators \cite{LeMurakamiMZVAssoc,RacinetThesis}, Feynman amplitudes \cite{BroadhurstKConj96}, and as periods of mixed Tate motives \cite{BrownMTM12}. Unlike single zeta values, multiple zeta values have a rich algebraic structure, the study of which goes back to Euler. Many families of relations, such as the associator relations \cite{LeMurakamiMZVAssoc}, the double shuffle relations \cite{RacinetDoubleShuff}, and the confluence relations \cite{HIROSEConfluence}, are conjectured to exhaust all relations among MZVs. However, this is incredibly challenging and encompasses still-open questions such as the transcendence of $\z(2k+1)$. 

One approach to make this more manageable is to consider instead motivic multiple zeta values. Via their connection to mixed Tate motives, MZVs may be lifted to formal, algebraic objects, only satisfying relations coming from the geometry of $\proj$ \cite{BrownMTM12}. In this setting, much more is known: the ring $\mathcal{H}$ of motivic MZVs are known to be graded by weight, with weight graded dimensions $d_n$ given by
\[\sum_{n\geq 0}d_nx^n=\frac{1}{1-x^2-x^3} \,.\]
Motivic multiple zeta values have an explicit basis \cite{BrownMTM12}, given by the Hoffman zeta values
\[\{\zm(k_1,\ldots,k_r)\mid k_1,\ldots,k_r\in\{2,3\}\} \,.\]
However, the question of providing a complete set of relations remains an open problem.

One approach to describing all (motivic) relations among MZVs is to consider relations the associated graded algebra with respect to the depth filtration
\[\mathcal{D}_n\mathcal{H}=\langle \zm(k_1,\ldots,k_r)\mid r\leq n\rangle_\Q \,.\]
Relations in $\gr_\bullet^\mathcal{D}\mathcal{H}$ are much simpler, with the stuffle product reducing to a simple shuffle product. However, this introduces additional relations \cite{GKZ06},
\[14\zm(3,9)+75\zm(5,7)+84\zm(7,5)\equiv 0 \pmod{\text{lower depth}} \] 
and the associated Lie algebra of relations is no longer free \cite{Ihara02}
\[\{\sigma_3,\sigma_9\} - 3\{\sigma_5,\sigma_7\} \equiv 0 \pmod{\text{terms of higher depth}} \,.\]
In particular, there are a family of such quadratic relations, arising from period polynomials of cusp forms \cite{GKZ06}. Both the relations among multiple zeta values and among elements of the motivic Lie algebra are commonly referred to as the \emph{period polynomial relations}. It is conjectured that these relations determine all additional relations among depth graded multiple zeta values -- that is to say the the associated Lie algebra of relations is the quotient of a free Lie algebra by the idea generated by these quadratic relations.

\begin{conj}[Broadhurst-Kreimer, \cite{BroadhurstKConj96}]\label{conj:depthzetadim}
The generating series for the dimension of the depth graded multiple zeta values is given by
\[BK(x,y)\coloneqq\frac{1}{1-O(x)y+S(x)y^2-S(x)y^4} \,, \]
where
\[O(x)\coloneqq\frac{x^3}{1-x^2} \,, \]
and
\[S(x)=\frac{x^{12}}{(1-x^4)(1-x^6)}\]
is the generating function for the space of cusp forms of weight $n$ for the full modular group. That is to say that the number of linearly independent depth graded multiple zeta values of weight $n$ and depth $d$ is given by the coefficient of $x^ny^d$ of $BK(x,y)$, and that these dimensions are determined by the relations coming from cusp forms.
\end{conj}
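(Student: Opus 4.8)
A proof of Conjecture~\ref{conj:depthzetadim} is far beyond reach at present; what follows is a strategy, together with an honest account of the central obstruction, rather than a complete argument.

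\textbf{Reduction to the depth-graded motivic Lie algebra, and to an upper bound.} By Brown's theorem \cite{BrownMTM12}, $\mathcal{H}\cong\Q[\zm(2)]\otimes\mathcal{A}$, where $\mathcal{A}=\mathcal{H}/\zm(2)\mathcal{H}$ is the graded dual of $U(\gmot)$ for a Lie algebra $\gmot$ free on one generator $\sigma_{2i+1}$ in each odd weight $2i+1\ge 3$. The depth filtration on $\mathcal{A}$ dualises to a filtration on $\gmot$; write $\dg\coloneqq\gr^\mathcal{D}\gmot$ for the associated bigraded (weight, depth) Lie algebra, so that $\gr^\mathcal{D}\mathcal{A}$ is the graded dual of $U(\dg)$ and Conjecture~\ref{conj:depthzetadim} says precisely that the Poincar\'e series of $U(\dg)$ equals $BK(x,y)$. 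By the Euler characteristic identity for enveloping algebras, together with the known facts that $\dg$ is concentrated in depths $\ge 1$ and its depth-one part has Poincar\'e series $O(x)$, this is equivalent to: $\dg$ is generated in depth $1$ (by the $\sigma_{2i+1}$); its relations are generated in depth $2$, precisely by the period polynomials of cusp forms --- so $\dg$ is not free --- giving $H^2(\dg)$ the series $S(x)y^2$ (cf.\ \cite{GKZ06}); and the only further cohomology is the correction $-S(x)y^4$, relations among those relations, living in depth $4$ and again indexed by cusp forms \cite{BrownDepthGraded21}. It is moreover enough to prove the term-by-term \emph{upper} bound $\dim\gr^\mathcal{D}_{n,d}\mathcal{A}\le[x^ny^d]BK(x,y)$, since summing over the finitely many depths $d\le n$ gives $\dim\mathcal{A}_n$, which by Brown's dimension theorem equals $[x^n]BK(x,1)=[x^n]\tfrac{1-x^2}{1-x^2-x^3}$; a term-by-term upper bound plus matching totals forces equality in every bidegree. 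So the conjecture reduces to showing that every relation in $\gr^\mathcal{D}\mathcal{H}$ is a consequence of the shuffle relations together with the period-polynomial relations.

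\textbf{Attacking the upper bound via the block filtration.} The route this paper is built to feed into is to pass through the block filtration $\mathcal{B}$. Since $\mathcal{B}$ agrees with the coradical filtration \cite{CharltonBlock21,KeilthyBlock21}, the block-graded algebra $\gr^\mathcal{B}\mathcal{H}$ has no analogue of the depth ``defect'', and one can hope to (i) obtain a complete, block-degree-organised presentation of $\gr^\mathcal{B}\mathcal{H}$ and then (ii) compare the block and depth filtrations to transport that presentation to $\dg$. Step (ii) is exactly where modular forms enter: the present paper carries it out in block degree $2$, where the explicit evaluation of $\zeta(2,\ldots,2,4,2,\ldots,2)$ in terms of double zeta values turns the intrinsic symmetry of block-degree-$2$ elements into precisely the period-polynomial relations, producing --- at the level of Poincar\'e series --- the $S(x)y^2$ term of $H^2(\dg)$ and the constraint $-S(x)y^4$ among those relations. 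To reach the full conjecture one pushes (i) and (ii) through in every block degree, checking that no relations beyond those propagated from block degree $2$ are created, with the already-understood low-depth cases ($d\le 3$, cf.\ \cite{GKZ06,BrownDepthGraded21}) as the base.

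\textbf{The main obstacle.} The genuine difficulty is step (ii) in block degree $\ge 3$ --- equivalently, the upper bound on $\gr^\mathcal{D}\mathcal{A}$ in depth $\ge 4$: one must prove that $H^{\ge 2}(\dg)$ is no larger than the shape of $BK(x,y)$ permits and is governed entirely by cusp forms, i.e.\ that there are no further relations among depth-graded MZVs at all. No technique is currently known that controls this higher cohomology; even the depth-$2$ and depth-$3$ cases needed substantial work, and a proof would almost certainly require a genuinely new structural link between the depth- (or block-) graded Lie algebra and modular forms, beyond the period-polynomial relations derived here. The block-filtration viewpoint --- whose associated graded is genuinely better behaved --- is a natural place to look for such a link, but bridging from block degree $2$ to all block degrees, and hence all depths, is the crux, and remains wide open.
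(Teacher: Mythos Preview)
The statement you were asked to prove is a \emph{conjecture}, and the paper does not prove it; it is stated as the open Broadhurst--Kreimer conjecture, with the paper explicitly noting that ``a proof of this remains out of sight''. There is therefore no proof in the paper to compare your attempt against, and you were right to frame your submission as a strategy rather than a proof.

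Your outline is broadly faithful to the paper's philosophy: the paper carries out exactly your step~(ii) in block degree~$2$, showing via the explicit evaluation of $\zeta(\{2\}^a,4,\{2\}^b)$ and Proposition~\ref{prop:alldouble} that the period-polynomial relations among double zetas are consequences of the block dihedral symmetry. One overstatement, however: you credit the present paper with also producing ``the constraint $-S(x)y^4$ among those relations'', but the paper only handles the depth-$2$ piece (the $S(x)y^2$ term). It explicitly remarks, just after Proposition~\ref{prop:kernel-period-isomorphism}, that the even-projection of $\rbg$ it studies \emph{cannot} detect the exceptional depth-$4$ generators needed for the $-S(x)y^4$ correction, so that portion of the conjecture remains entirely open even from the block viewpoint. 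Otherwise your identification of the main obstacle --- controlling the higher cohomology of $\dg$ in depth $\ge 4$ --- is accurate and matches the current state of the art.
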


However, a proof of this remains out of sight, and these additional relations make using the depth graded Lie algebra to conclude statements about ungraded MZVs challenging.

An alternative approach, first explored in \cite{KeilthyThesis20,KeilthyBlock21} and based on results in \cite{CharltonThesis,CharltonBlock21}, is to consider the so-called block filtration. This filtration provides a simple description of the coradical filtration associated to the motivic coaction in terms of a combinatorial degree function. Specifically
\[\mathcal{B}_n\mathcal{H}=\langle \zm(w)\mid \degb(w)\leq n\rangle_\Q\]
where $\degb(w)$ counts the number of subsequences $e_ie_i$ in $e_0we_1$. In \cite{KeilthyBlock21}, we see that in the associated graded algebra with respect to the block filtration, there are no additional relations, and furthermore that a complete set of relations can be given in low block degree. One might then ask how the period polynomial relations manifest in this setting.

\begin{lem}\label{lem:depthsubblock}
The depth filtration is a subfiltration of the block filtration:
\[ \mathcal{D}_n\mathcal{H}\subset \mathcal{B}_n\mathcal{H}\,.\]
\end{lem}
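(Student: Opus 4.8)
The plan is to exploit the known identification of the block filtration with the coradical filtration for the motivic coaction, together with the fact that the depth filtration is automatically compatible with that coaction. Write $\Delta\colon\mathcal{H}\to\mathcal{A}\otimes\mathcal{H}$ for Brown's motivic coaction, where $\mathcal{A}=\mathcal{H}/\zm(2)\mathcal{H}$, and $\widetilde\Delta\coloneqq\Delta-1\otimes\id$ for the reduced coaction, valued in $\mathcal{A}_{>0}\otimes\mathcal{H}$. By \cite{CharltonBlock21,KeilthyBlock21}, $\mathcal{B}_\bullet\mathcal{H}$ is the coradical filtration attached to $\Delta$; concretely, $\mathcal{B}_0\mathcal{H}=\Q[\zm(2)]$ is the algebra of $\Delta$-invariants and, for $n\ge 1$, $\mathcal{B}_n\mathcal{H}=\{x\in\mathcal{H}:\widetilde\Delta(x)\in\mathcal{A}_{>0}\otimes\mathcal{B}_{n-1}\mathcal{H}\}$, equivalently $\mathcal{B}_n\mathcal{H}=\ker\bigl(\widetilde\Delta^{[n+1]}\bigr)$, the kernel of the $(n{+}1)$-fold iterated reduced coaction $\widetilde\Delta^{[n+1]}\colon\mathcal{H}\to\mathcal{A}_{>0}^{\otimes(n+1)}\otimes\mathcal{H}$. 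It therefore suffices to prove
\[
\widetilde\Delta\bigl(\mathcal{D}_n\mathcal{H}\bigr)\ \subseteq\ \mathcal{A}_{>0}\otimes\mathcal{D}_{n-1}\mathcal{H}\qquad\text{for all }n\ge 0
\]
(with the convention $\mathcal{D}_{-1}\mathcal{H}\coloneqq 0$): iterating this inclusion $n+1$ times shows that $\widetilde\Delta^{[n+1]}$ annihilates $\mathcal{D}_n\mathcal{H}$, whence $\mathcal{D}_n\mathcal{H}\subseteq\mathcal{B}_n\mathcal{H}$. The base case $n=0$ is just $\mathcal{D}_0\mathcal{H}=\Q\subseteq\Q[\zm(2)]=\mathcal{B}_0\mathcal{H}$.

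To prove the displayed inclusion I would feed Goncharov's explicit formula for the coaction on iterated integrals into it. For a word $w$ of depth $n$, every term of $\widetilde\Delta\bigl(\imot(0;w;1)\bigr)$ has the form $\ia(0;u;1)\otimes\prod_p\imot\bigl(\,\cdot\,;\cdots;\,\cdot\,\bigr)$, where $u$ runs over the non-empty proper subsequences of the letters of $w$ and the $\mathcal{H}$-factor is the product of the iterated integrals over the complementary segments. The number of letters equal to $e_1$ is conserved between the two tensor factors: the chosen letters become the letters of $u$, those not chosen reappear as interior letters of the complementary integrals, and neither the integration endpoints $0,1$ nor the internal segment endpoints (which sit on the boundary of their integrals) contribute to depth; moreover the standard relations needed to rewrite the complementary integrals inside $\mathcal{H}$, namely path reversal and the vanishing of integrals with coinciding endpoints, preserve the multiset of letters, so this count really is conserved. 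The crucial point is that if $u$ contains no $e_1$ then $\ia(0;u;1)$ is the class of $\imot(0;e_0^{|u|};1)$, which vanishes by the shuffle relation; hence every surviving term absorbs at least one $e_1$ into the $\mathcal{A}$-factor, leaving at most $n-1$ of the $e_1$-letters in the $\mathcal{H}$-factor. Since each complementary integral is, up to sign and after shuffle regularisation if needed, an element of $\mathcal{D}_k\mathcal{H}$ with $k$ its number of $e_1$-letters, and since the depth filtration is a ring filtration ($\mathcal{D}_a\mathcal{H}\cdot\mathcal{D}_b\mathcal{H}\subseteq\mathcal{D}_{a+b}\mathcal{H}$), the $\mathcal{H}$-factor lies in $\mathcal{D}_{n-1}\mathcal{H}$, as required.

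The step I expect to be the main obstacle is precisely this bookkeeping: one must track the $e_1$-count carefully through Goncharov's formula, in particular through the degenerate boundary integrals it produces (those with equal endpoints, or with all letters equal to an endpoint), check that these are trivial or vanish motivically so as not to interfere, and confirm that the conservation of $e_1$-letters is not disturbed by the shuffle regularisation of the complementary integrals. Everything else in the argument is either an external input — the identification of $\mathcal{B}_\bullet\mathcal{H}$ with the coradical filtration \cite{CharltonBlock21,KeilthyBlock21} and the explicit coaction formula \cite{BrownMTM12} — or a formal consequence of it.
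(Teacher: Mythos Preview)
Your strategy is the paper's: use that the block filtration coincides with the coradical filtration, show the depth filtration is compatible with the coaction, and induct. The paper packages the second step by citing the known fact that $\Delta\mathcal{D}_n\mathcal{H}\subset\sum_{i+j=n}\mathcal{D}_i\mathcal{A}\otimes\mathcal{D}_j\mathcal{H}$ and then reducing to the base case $\mathcal{D}_1\mathcal{H}\subset\mathcal{B}_1\mathcal{H}$, which it attributes to Brown's Lemma~3.2; you instead try to verify $\widetilde\Delta(\mathcal{D}_n\mathcal{H})\subset\mathcal{A}_{>0}\otimes\mathcal{D}_{n-1}\mathcal{H}$ directly from Goncharov's formula. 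Either route works.

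There is, however, a genuine slip in your execution: you have the two tensor factors of the coaction formula swapped. In the convention used here (see \autoref{eqn:coaction}), the $\mathcal{A}$-factor is the \emph{product} $\prod_s\ia(a_{i_s};\ldots;a_{i_{s+1}})$ over the complementary segments, while the $\mathcal{H}$-factor is the \emph{single} integral $\imot(0;a_{i_1},\ldots,a_{i_k};1)$ over the chosen subsequence. Consequently your key step ``if $u$ contains no $e_1$ then $\ia(0;u;1)=0$'' is applied to the wrong side: as written it would show that the $\mathcal{H}$-factor must absorb an $e_1$, which is the opposite of what you need. The fix is straightforward and uses the same vanishing principle: if no $e_1$ lies in the interior of any $\mathcal{A}$-piece, then (since $\widetilde\Delta$ omits the term with empty complement) some piece is of the form $\ia(x;\{0\}^m;y)$ with $m\geq1$ and $x,y\in\{0,1\}$, which vanishes either by equal boundaries or by shuffle regularisation. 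Hence every surviving term has at least one $e_1$ in the $\mathcal{A}$-factor, and the $\mathcal{H}$-factor lies in $\mathcal{D}_{n-1}\mathcal{H}$. Note also that with the correct convention the $\mathcal{H}$-factor is a single integral, so your discussion of the depth filtration being multiplicative on the $\mathcal{H}$-side is unnecessary.
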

\begin{proof}
First note that the depth filtration is motivic:
\[\Delta\mathcal{D}_n\mathcal{H}\subset\sum_{i+j=n}\mathcal{D}_i\mathcal{A}\otimes\mathcal{D}_j\mathcal{H}\,.\]
As such, since the block filtration is equal to the coradical filtration, it suffices to show that $\mathcal{D}_1\mathcal{H}\subset\mathcal{B}_1\mathcal{H}$. This is an immediate consequence of Lemma 3.2 \cite{BrownMTM12}.
\end{proof}

As depth is a subfiltration of the block filtration, it is clear the we should be able to express double zeta values in terms of block degree 2 zeta values, and hence that all block graded relations among them, modulo products, should be determined by relations describing $\bg$, the associated Lie algebra of relations among block graded MZVs. However, \autoref{lem:depthsubblock} and \autoref{lem:blockparity} implies that, in block degree two and even weight, relations among multiple zeta values modulo terms of lower block degree are genuine relations modulo products. Thus, the period polynomial relations, modulo products, should arise as a consequence of the relations among block graded MZVs introduced in \cite{KeilthyBlock21}.

And indeed, this seems to be the case. The following is a consequence of \autoref{lem:eval:z2242prodmot}, and allows us to show that relations among double zeta values of even weight are encoded by certain explicit polynomials in $\Q[x_1,x_2,x_3]$.

\begin{cor}\label{eval:doublez2242}
Modulo products, the following holds for any $0\leq 2a\leq n$,
\begin{align*}
\sum_{i=a}^{n-a}\zl(\{2\}^i,4,\{2\}^{n-i}) & {} = 4(-1)^{n+1}\sum_{i=a}^{n-a} \zl(2i+3,2n-2i+1) \\
& {} = 4(-1)^n\zl(2a+1,2n-2a+3)\,.
\end{align*}
\begin{proof}
Letting $n\coloneqq a+b$ in \autoref{lem:eval:z2242prodmot}
, we have
\begin{align*}
\zl(\{2\}^a,4,\{2\}^b) ={}  &4(-1)^{n} \bigg[ {-}\zl(2a+2,2b+2) - \zl(2a+3,2b+1)\\
&{} +\sum_{j=1}^{2n+3} 2^{j-4-2n}\left( \! \binom{2n+3-j}{2b+1}-\binom{2n+3-j}{2a+1} \! \right)\zl(j,2n+4-j) \bigg] \,.
\end{align*}
Noting that both  $\zl(2a+2,2b+2)$ and 
\[\binom{2n+3-j}{2b+1}-\binom{2n+3-j}{2a+1}\]
are anti-symmetric in $a$ and $b$, the result immediately follows.
\end{proof}
\end{cor}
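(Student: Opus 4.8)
The plan is to feed \autoref{lem:eval:z2242prodmot} the substitution $b = n-i$ and sum over a symmetric range of indices; the corollary is then pure symmetrisation. Explicitly, the lemma, in the form displayed in the proof above (with $a$ replaced by $i$ and $b$ by $n-i$, so that the quantity $a+b=n$ is held fixed), gives, modulo products,
\[
\sum_{i=a}^{n-a}\zl(\{2\}^i,4,\{2\}^{n-i}) = 4(-1)^n \sum_{i=a}^{n-a}\Bigl[ {-}\zl(2i+2,2n-2i+2) - \zl(2i+3,2n-2i+1) + \Sigma_i\Bigr],
\]
where $\Sigma_i := \sum_{j=1}^{2n+3} 2^{j-4-2n}\bigl(\binom{2n+3-j}{2n-2i+1}-\binom{2n+3-j}{2i+1}\bigr)\zl(j,2n+4-j)$. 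The two equalities of the corollary follow once one checks (i) that $\sum_{i=a}^{n-a}\zl(2i+2,2n-2i+2)$ and $\sum_{i=a}^{n-a}\Sigma_i$ both vanish modulo products, and (ii) that $\sum_{i=a}^{n-a}\zl(2i+3,2n-2i+1)$ collapses to the single term $-\zl(2a+1,2n-2a+3)$.

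Both (i) and (ii) rest on the same elementary device: if $\iota$ is an involution of a finite set $I$ and $f$ is a function on $I$ valued in a $\Q$-vector space with $f(\iota(i))=-f(i)$, then $\sum_{i\in I}f(i) = \sum_{i\in I}f(\iota(i)) = -\sum_{i\in I}f(i)$, hence $\sum_{i\in I}f(i)=0$. For (i) I would take $I=\{a,\dots,n-a\}$, which is symmetric under $\iota(i)=n-i$. With $f(i)=\Sigma_i$ the sign change is purely combinatorial, since $i\mapsto n-i$ interchanges the two binomial coefficients and leaves the factors $2^{j-4-2n}\zl(j,2n+4-j)$ untouched, so $\Sigma_{n-i}=-\Sigma_i$. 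With $f(i)=\zl(2i+2,2n-2i+2)$ one uses that, modulo products, a length-two zeta value of even weight is antisymmetric in its two arguments --- the antisymmetry already invoked in the proof sketch, coming from the stuffle relation $\zl(x,y)+\zl(y,x)=-\zl(x+y)$ together with $\zl(2m)=0$ for $m\ge2$ --- so that $\zl(2n-2i+2,2i+2)=-\zl(2i+2,2n-2i+2)$. This gives the first equality $\sum_{i=a}^{n-a}\zl(\{2\}^i,4,\{2\}^{n-i})=4(-1)^{n+1}\sum_{i=a}^{n-a}\zl(2i+3,2n-2i+1)$.

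For (ii), note that $\zl(2a+1,2n-2a+3)$ is the value of $\zl(2i+3,2n-2i+1)$ at $i=a-1$, so the second equality is equivalent to $\sum_{i=a-1}^{n-a}\zl(2i+3,2n-2i+1)=0$ modulo products. Applying the device with $I=\{a-1,\dots,n-a\}$ and $\iota(i)=n-1-i$, which is an involution of $I$ because it exchanges the two endpoints, and invoking once more the even-weight antisymmetry, which gives $f(\iota(i))=\zl(2n-2i+1,2i+3)=-f(i)$, makes the sum vanish. Given \autoref{lem:eval:z2242prodmot} there is no real obstacle here; the only point deserving a word is that several of the double zeta values that occur carry a trailing $1$ --- for instance $\zl(2n+3,1)$, which appears in $\Sigma_i$ and, when $a=0$, in the symmetrised sum --- and are to be understood as the relevant shuffle-regularised motivic values, for which the stuffle antisymmetry invoked above continues to hold modulo products.
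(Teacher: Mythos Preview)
Your proof is correct and follows essentially the same route as the paper's: sum the evaluation from \autoref{lem:eval:z2242prodmot} over the symmetric range $i=a,\dots,n-a$ and use the antisymmetry under $i\mapsto n-i$ (equivalently $a\leftrightarrow b$) to kill the even--even double zeta term and the binomial sum. Your treatment of the second equality via the shifted involution $i\mapsto n-1-i$ on $\{a-1,\dots,n-a\}$ makes explicit the telescoping that the paper leaves to the reader, and your remark on the regularised values with a trailing~$1$ is a useful clarification.
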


From this, it is possible to deduce that dimension of double zeta values of weight $2n+2$ modulo products is bounded above by $\lfloor\frac{n}{3}\rfloor$, which is precisely the dimension predicted by \autoref{conj:depthzetadim}. As the modulo products version of \autoref{conj:depthzetadim} is known to hold in depth two \cite{SchnepsPoisson06}, we must have that all period polynomial relations can be written in terms of the block relations, defined in \autoref{sec:blockdmzv}, and thus \autoref{prop:alldouble} holds.

\renewcommand{\shortlong}[2]{#1}
\begin{restatable*}{prop}{propalldbl}
\label{prop:alldouble}
All relations among double zeta values of weight $2n+2$ modulo products are determined by \eqref{rel1} and \eqref{rel3} via \autoref{eval:doublez2242}.
\end{restatable*}
\renewcommand{\shortlong}[2]{#2}
Indeed, using a computer one can easily write the period polynomial relations as linear combinations of relations coming from the dihedral symmetry of Section 8 of \cite{KeilthyBlock21}. A more explicit connection is given in \autoref{prop:kernel-period-isomorphism}.  \medskip

The structure of this paper is as follows. We first will briefly reminder readers of the motivic formalism, and in particular the use of the motivic coaction to deduce relations. We then describe the block filtration and review several of the results of \cite{KeilthyBlock21}. In particular, we will introduce the block dihedral symmetry and the necessary framework to discuss it.

In \autoref{sec:blockdmzv}, we then apply these results, along with a number of new evaluations to conclude that the period polynomial relations are a consequence of this block dihedral symmetry in block degree 2.  The remainder of the paper is then dedicated to the necessary technical results needed for this section. Specifically, an evaluation of $\z(\{2\}^a,4,\{2\}^b)$ in terms of double zeta values\footnote{Computer readable versions of the full evaluations from \autoref{thm:eval:zs2242} and \autoref{thm:eval:z2242} in \autoref{sec:num2242} are attached to the arXiv submission,  as plain text files in \texttt{Mathematica} syntax and in \texttt{pari/gp} syntax.}, and a computation of the dimension of the space cut out by the block dihedral symmetry. The latter is a straight forward argument from representation theory, while the former is a trek through the world of MZV relations and machinery, including: 
motivic cobracket calculations; multiple Euler sums (also called alternating MZVs) and the octagon-relation-induced dihedral symmetries thereof \cite{GlanoisThesis16,GlanoisBasis16}; 
multiple zeta star values and the stuffle-antipode \cite[Equation 2.4]{LiIdentities13},\cite[Proposition 1]{HoffmanQuasi20}; 
Zhao's generalised 2-1 theorem \cite{ZhaoIdentity16} (and the first author's block-decomposition description thereof \cite{CharltonCyclic20}); 
(alternating) multiple zeta-half values \cite{YamamotoInterp};
the explicit depth-parity theory for depth 3 alternating MZVs \cite{PanzerParity16,GoncharovGalois01}; 
and a vital (and serendipitously unearthed) generalised doubling relation \cite[Section 14.2.5]{ZhaoBook}. We divide these results between \autoref{sec:proofs} and an appendix, according to how central they are to the main results of the article.

We end the main body of the paper with a short corollary of \autoref{prop:zevbarevbar:eval} in relation to a variation of multiple zeta values, called multiple $t$ values \cite{HoffmanOdd19}
\[t(k_1,\ldots,k_d)\coloneqq\sum_{0<n_1<\cdots<n_d}\frac{1}{(2n_1-1)^{k_1}\cdots (2n_d-1)^{k_d}}\,.\]
In particular, we provide an evaluation of $t(2\ell,2k)$, when the arguments are even, via classical double zeta values, improving upon the results of \cite[Theorem 1]{MurakamiMtVs21} by giving an explicit formula for the Galois descent in this case.

\subsection*{Acknowledgements} SC was supported by Deutsche Forschungsgemeinschaft Eigene Stelle grant CH 2561/1-1, for Projektnummer 442093436, during the preparation of this work at Universit\"at Hamburg.  AK is grateful to the Max Planck Institute for Mathematics, Bonn, for the support and hospitality provided during his stay. The authors are very thankful to the referee for their insightful and considered comments.

\section{The motivic Lie algebra and block graded multiple zeta values}
An essential observation in the study of multiple zeta values is that they may be lifted to motivic periods -- algebraic objects satisfying only relations coming from geometry. Because of this, motivic multiple zeta values (mMZVs) are much simpler to study. They are known to be graded by weight and they come equipped with a coaction that encodes all motivic relations. We may consider their graded analogues with respect to a number of filtrations, or consider the associated Lie coalgebra of mMZVs modulo products, whose relations are encoded in a free Lie algebra. The theory of motivic periods is substantial \cite{BrownMotivicPeriodNotes}, so we give only an essential overview here, and refer the reader to \cite{BrownMotivicPeriodSummary} for more details.

\subsection{Motivic multiple zeta values}\label{sec:motmzv}
The formal definition of mMZVs relies on the Tannakian formalism for the category of mixed Tate motives over $\Spec\mathbb{Z}$, and is intimately related to the motivic fundamental group of $\proj$ \cite{BrownMTM12}. In brief, letting $\MT(\mathbb{Z})$ denote the category of mixed Tate motives, and denoting by 
\[\omega_B,\omega_{dR}:\MT(\mathbb{Z})\to\Vect_\Q\]
the Betti and de Rham realisation functors, the ring of motivic periods of $\MT(\mathbb{Z})$ is the ring of functions on the scheme of tensor isomorphisms
\[\mathcal{P}^{\mot}_{\MT(\mathbb{Z})}\coloneqq\mathcal{O}(\Isom_{\MT(\mathbb{Z})}^\otimes(\omega_{dR},\omega_B))\,.\]
The results of Brown \cite{BrownMTM12} tell us that there is isomorphic to $\mathcal{H}[\mathbb{L}^{-1}]$, where $\mathcal{H}$ will be the algebra of motivic multiple zeta values, and $\mathbb{L}$ is a motivic analogue of $2\pi \ii$.

However, for our purposes, the reader need only keep in mind the following properties.

\begin{properties}\label{def:Hn}
The algebra $\mathcal{H}$ of motivic multiple zeta values is the $\Q$-algebra spanned by symbols 
\[\imot(a_0;a_1,\ldots,a_n;a_{n+1})\text{ where }a_i\in\{0,1\}\,,\]
called motivic multiple zeta values or motivic iterated integrals, satisfying the following properties:
\begin{enumerate}[i)]
    \item (Equal boundaries) $\imot(a_0;a_1,\ldots,a_n;a_0)=\delta_{n,0}$,
    \item (Reversal of paths) $\imot(a_0;a_1,\ldots,a_n;a_{n+1})=(-1)^n\imot(a_{n+1};a_n,\ldots,a_1;a_0)$,
    \item (Functoriality) $\imot(a_0;a_1,\ldots,a_n;a_{n+1})=\imot(1-a_0;1-a_1,\ldots,1-a_n;1-a_{n+1})$
    \item (Shuffle product) For $1<r<n$, denote by $\Sh_{r,n-r}$ the set of permutations $\sigma$ on $n$ satisfying
    \[ \sigma(1)<\sigma(2)<\cdots<\sigma(r)\text{ and }\sigma(r+1)<\cdots<\sigma(n)\,.\]
    Then
    \[\imot(0;a_1,\ldots,a_r;1)\imot(0;a_{r+1},\ldots,a_n;1)=\sum_{\sigma\in\Sh_{n,r}}\imot(0;a_{\sigma^{-1}(1)},\ldots,a_{\sigma^{-1}(n)};1)\,.\]
    \item (Period map) There is a ring homomorphism $\per:(\mathcal{H},\cdot{})\to (\C,{}\cdot{})$, called the period map, sending a motivic iterated integral to the corresponding complex iterated integral when it converges.
\end{enumerate}

For a tuple of positive integers $(k_1,\ldots,k_d)$, and \( \ell \geq 0 \), we write \( \zm = \zm_0 \) and  \begin{equation}\label{eqn:zasint}
\zm_\ell(k_1,\ldots,k_d)\coloneqq(-1)^d\imot(0;\{0\}^\ell,1,\{0\}^{k_1-1},\ldots,1,\{0\}^{k_d-1};1)\,,
\end{equation}
where $\{0\}^n$ denotes $n$ repeated zeroes.
\end{properties}

\begin{remark}\label{rem:regularisation}
In the standard definition of motivic multiple zeta values, we have that
\[\imot(0;0;1)=\imot(0;1;1) = 0.\]
This, combined with compatibility with the shuffle product, uniquely determines the image of motivic MZVs which do not correspond to convergent iterated integrals. This convention produces what are called shuffle regularised MZVs. It is occasionally convenient to consider other regularisations, such as stuffle regularised MZVs or MZVs regularised so that $\zeta(1)$ is regularised to a non-zero constant \cite{IKZ06}.
\end{remark}

\begin{remark}
The reversal of paths property and the functoriality property give an important relation for motivic MZVs called the \emph{duality} relation:
\[\imot(0;a_1,\ldots,a_n;1)=(-1)^n\imot(0;1-a_n,\ldots,1-a_1;1)\, .\]
\end{remark}

Let $\mathcal{A}\coloneqq\mathcal{H}/(\zm(2))$ be the quotient by the ideal generated by $\zm(2)$, and denote by $\ia(a_0;a_1,\ldots,a_n;a_{n+1})$ the image of $\imot(a_0;a_1,\ldots,a_n;a_{n+1})$. The formula given below equips $\mathcal{H}$ with the structure of an $\mathcal{A}$-comodule
\[\Delta\colon\mathcal{H}\to\mathcal{A}\otimes\mathcal{H}\,.\]
Explicitly, $\Delta\imot(a_0;a_1,\ldots,a_n;a_{n+1})$ is equal to
\begin{equation}\label{eqn:coaction}
\sum_{\substack{i_0<i_1<\cdots<i_{k+1}\\i_0=0,\, i_{k+1}=n+1}}\left(\prod_{s=0}^k\ia(a_{i_s};a_{i_s+1},\ldots,a_{i_{s+1}-1};a_{i_{s+1}})\right)\otimes\imot(a_0;a_{i_1},\ldots,a_{i_k};a_{n+1})\,.\end{equation}
A linear combination $R$ of motivic multiple zeta values vanishes in $\mathcal{H}$ if and only if
\begin{enumerate}[i)]
    \item $\per(R)=0$, i.e. $R$ holds numerically
    \item $\per(R^\prime)=0$ for all transforms $R^\prime$ under the motivic coaction, i.e. the relation is motivic. A transform $R^\prime$ of $R$ under the motivic coaction is obtained by choosing a weight graded basis $\{a_i\}_{i\in I}$ of $\mathcal{A}$ and writing
    \[\Delta(R) = \sum_{i\in I} a_i\otimes R_i\,.\]
    Each $R_i$ is referred to as a transform of $R$.   
\end{enumerate}

As the coaction is quite combinatorially complicated, it is often convenient to instead consider the infinitesimal coactions $\D_r$. Define the Lie coalgebra of indecomposables 
\[\mathcal{L}\coloneqq\mathcal{A}_{>0}/\mathcal{A}_{>0}\mathcal{A}_{>0}\]
where $\mathcal{A}_{>0}$ denotes the positive weight part of $\mathcal{A}$. Denote by $I^\lmot(a_0;a_1,\ldots,a_n;a_{n+1})$ the image of $\ia(a_0;a_1,\ldots,a_n;a_{n+1})$ in $\mathcal{L}$, and similarly denote by $\zl(k_1,\ldots,k_r)$ the image of $\zm(k_1,\ldots,k_r)$ in $\mathcal{L}$. Let $\mathcal{L}_r$ be the weight $r$ component of $\mathcal{L}$. The infinitesimal coaction is then the composition
\[\mathcal{H}\to \mathcal{A}\otimes\mathcal{H}\to \mathcal{L}_r\otimes\mathcal{H}\]
of $\Delta-1\otimes \id$ with the projection to $\mathcal{L}_r$. Explicitly, $\D_r\imot(a_0;a_1,\ldots,a_n;a_{n+1})$ is given by
\begin{equation}\label{eqn:derivation}\sum_{k=0}^{n-r}I^\lmot(a_k;a_{k+1},\ldots,a_{k+r};a_{k+r+1})\otimes \imot(a_0;a_1,\ldots,a_k,a_{k+r+1},\ldots,a_n;a_{n+1})\,.\end{equation}

These infinitesimal coactions are significantly easier to compute, but still encode almost all essential information surrounding motivic MZVs.

\begin{thm}[{Brown \cite[Theorem 3.3]{BrownMTM12}}]\label{thm:coaction:identities}
Let $N>2$ and denote by $\D_{<N}=\bigoplus_{3\leq 2r+1<N}\D_{2r+1}$. Then in weight $N$, the kernel of $\D_{<N}$ is one dimensional:
\[\ker\D_{<N} \cap \mathcal{H}_N = \Q \zm(N)\,.\]
\end{thm}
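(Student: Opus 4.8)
The result is \cite[Theorem~3.3]{BrownMTM12}; one would proceed as follows. For $\xi\in\mathcal{H}$ of positive weight $N$ write $\Delta\xi = 1\otimes\xi + \pi(\xi)\otimes 1 + \Delta'\xi$, where $\pi\colon\mathcal{H}\to\mathcal{A}$ is the quotient and $\Delta'\xi\in\bigoplus_{0<i<N}\mathcal{A}_i\otimes\mathcal{H}_{N-i}$ is the reduced coaction; call $\xi$ \emph{primitive} if $\Delta'\xi=0$. The inclusion $\Q\zm(N)\subseteq\ker\D_{<N}$ is the easy half: for $N$ even, $\zm(N)$ is a rational multiple of $\zm(2)^{N/2}$, which is primitive because $\zm(2)$ is (its class in $\mathcal{A}$ vanishes), so every $\D_r$ kills it; for $N$ odd, applying \eqref{eqn:derivation} to $\zm(N)=-\imot(0;1,\{0\}^{N-1};1)$ one finds that, for $r<N$, each window $I^{\lmot}(a_k;\dots;a_{k+r+1})$ has coinciding endpoints (so vanishes by ``equal boundaries'') or equals $I^{\lmot}(0;\{0\}^{r};1)$ or $I^{\lmot}(1;\{0\}^{r};0)$, which after shuffle regularisation is proportional to $I(0;0;1)^{r}$ with $r\geq 2$, hence a product, hence $0$ in $\mathcal{L}$. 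The same bookkeeping on \eqref{eqn:coaction} shows $\zm(N)$ is in fact primitive for every $N$; this is used below.

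For the reverse inclusion the plan is to show first that $\ker\D_{<N}\cap\mathcal{H}_N$ consists of primitives, and then that the primitives of $\mathcal{H}_N$ are exactly $\Q\zm(N)$. For the first part write $\Delta'\xi=\sum_i a_i\otimes h_i$ with the $h_i\in\mathcal{H}_{>0}$ linearly independent; then $\D_{<N}\xi=0$ says precisely that each $a_i$ is decomposable, $\pi_{\mathcal{L}}(a_i)=0$. Apply the comodule coassociativity $(\Delta_{\mathcal{A}}\otimes\id)\Delta=(\id\otimes\Delta)\Delta$ to $\xi$ and extract the component in $\mathcal{A}_{>0}\otimes\mathcal{A}_{>0}\otimes\mathcal{H}_{>0}$: this yields $\sum_i(\Delta'_{\mathcal{A}}a_i)\otimes h_i=\sum_i a_i\otimes\Delta'h_i$, where $\Delta'_{\mathcal{A}}$ is the reduced coproduct of $\mathcal{A}$. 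Projecting the first tensor factor to $\mathcal{L}$ annihilates the right-hand side, so $\sum_i\bigl((\pi_{\mathcal{L}}\otimes\id)\Delta'_{\mathcal{A}}a_i\bigr)\otimes h_i=0$, and independence of the $h_i$ forces $(\pi_{\mathcal{L}}\otimes\id)\Delta'_{\mathcal{A}}a_i=0$ for each $i$. Now bring in the structure of $\mathcal{A}$: by the Deligne--Goncharov determination of the motivic Galois group of $\MT(\Z)$, $\mathcal{A}$ is the graded dual of the enveloping algebra of the free Lie algebra on one generator in each odd weight $\geq 3$, hence a shuffle algebra with deconcatenation coproduct, and for such an algebra a direct combinatorial check gives $\ker\bigl((\pi_{\mathcal{L}}\otimes\id)\Delta'_{\mathcal{A}}\bigr)=\Q\oplus(\text{cogenerators})$. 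Thus each $a_i$ is simultaneously decomposable and a scalar-plus-cogenerator, and, being of positive weight, $a_i=0$. Hence $\Delta'\xi=0$.

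To finish, determine the primitives by induction on $N$ (the low-weight cases being immediate, since then $\mathcal{H}_N$ is already $\Q\zm(N)$). Let $\xi\in\mathcal{H}_N$ be primitive. Applying $\id\otimes\pi$ to $\Delta'\xi=0$ and using the compatibility $(\id\otimes\pi)\Delta=\Delta_{\mathcal{A}}\pi$ gives $\Delta'_{\mathcal{A}}(\pi\xi)=0$, so $\pi(\xi)$ is primitive in $\mathcal{A}$; by the structure of $\mathcal{A}$ its primitives in weight $N$ are spanned by $\pi(\zm(N))$ for $N$ odd and vanish for $N$ even. Subtracting the corresponding multiple of $\zm(N)$ — legitimate since $\zm(N)$ is itself primitive — reduces to $\pi(\xi)=0$, whence $\xi=\zm(2)\,\eta$ with $\eta\in\mathcal{H}_{N-2}$. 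Expanding $\Delta'(\zm(2)\eta)=\pi(\eta)\otimes\zm(2)+(1\otimes\zm(2))\Delta'\eta$ and observing that the two summands lie in different weight gradings of the $\mathcal{H}$-tensor factor, the equation $\Delta'(\zm(2)\eta)=0$ forces $\pi(\eta)=0$ and, since $\zm(2)$ is a non-zero-divisor, $\Delta'\eta=0$. By the inductive hypothesis $\eta\in\Q\zm(N-2)$, and $\pi(\eta)=0$ then gives $\eta=0$ (as $\pi(\zm(N-2))\neq 0$) when $N$ is odd and $\eta\in\Q\zm(2)^{(N-2)/2}$ when $N$ is even; either way $\xi\in\Q\zm(N)$, completing the induction.

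The main obstacle is the middle step: converting the purely formal condition $\D_{<N}\xi=0$ into genuine structural information — that $\xi$ is primitive — is exactly where one must inject the Deligne--Goncharov description of $\mathcal{A}$, and one must make sure this is not circular (it is not: that description does not use, and logically precedes, the present statement). Everything else — the easy inclusion, the coassociativity bookkeeping, and the induction down the $\zm(2)$-tower — is then formal.
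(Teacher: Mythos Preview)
Your argument is correct, and it takes a genuinely different route from the one the paper attributes to Brown. The paper does not actually prove this theorem; it only records that Brown establishes it by choosing an isomorphism of comodules $\mathcal{H}\cong\Q\langle f_3,f_5,\ldots\rangle[f_2]$ (over the coalgebra isomorphism $\mathcal{A}\cong\Q\langle f_3,f_5,\ldots\rangle$) and then reading off $\ker\D_{<N}$ directly in that explicit model, where the derivations become left-deconcatenation by letters. Your approach instead stays inside $\mathcal{H}$: you use coassociativity to upgrade ``each $a_i$ is decomposable'' to ``each $a_i$ lies in $\Q\oplus(\text{primitives})$'', conclude primitivity of $\xi$, and then peel off factors of $\zm(2)$ by induction. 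Both arguments draw on the same essential input, the Deligne--Goncharov description of $\mathcal{A}$; Brown's version front-loads this into a single isomorphism and is thereafter mechanical, while yours invokes pieces of it (primitives of $\mathcal{A}$ are the letters, $\mathcal{L}$ is concentrated in odd weight, $\mathcal{H}$ is a domain so $\zm(2)$ is a non-zero-divisor) exactly where needed.

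Two small points you rely on implicitly and should make explicit. First, the passage ``$\D_{<N}\xi=0$ says precisely that each $a_i$ is decomposable'' uses that $\mathcal{L}_{2r}=0$ and $\mathcal{L}_1=0$ (equivalently $\mathcal{A}_1=0$), since $\D_{<N}$ only probes odd weights $\geq 3$. Second, the assertion $\ker\bigl((\pi_{\mathcal{L}}\otimes\id)\Delta'_{\mathcal{A}}\bigr)=\Q\oplus(\text{primitives})$ is true but is not quite a ``direct combinatorial check'': dually it is the statement that $U_{>0}/\mathfrak{f}\,U_{>0}\cong\mathfrak{f}/[\mathfrak{f},\mathfrak{f}]$ for $U=U(\mathfrak{f})$, which follows from PBW but deserves a sentence.
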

Brown proves this result by considering a particular choice of isomorphism of coalgebras
\[(\mathcal{A},\Delta)\cong (\Q\langle f_3,f_5,f_7,\ldots\rangle,\Delta_{\decon})\,,\]
which he lifts to an isomorphism of comodules over these coalgebras. We may instead consider the corresponding vector spaces of indecomposables, equipped with with the structure of Lie coalgebras by defining the cobracket to be the natural cobracket coming from antisymmetrising the coproduct. We then have an isomorphism
\[(\mathcal{L},\partial)\cong(\Q\langle f_3,\ldots\rangle_{>0}/\Q\langle f_3,\ldots\rangle^{\sh 2}_{>0},\partial_{\decon})\,,\]
which we use to obtain the following standard proposition.
\begin{prop}\label{prop:coaction:modprod}
Denote by 
\[\partial_r:\mathcal{L}\to\mathcal{L}_r\otimes\mathcal{L}\] 
the $r$\textsuperscript{th} infinitesimal cobracket, given in weight $N$ by 
\[\partial_r \xi \coloneqq \pi_r\circ\partial=\D_{r}\xi-\tau\D_{N-r}\xi\]
where $\tau(a\otimes b)=b\otimes a$. Let $\partial_{<N}=\bigoplus_{3\leq 2r+1<N}\partial_{2r+1}$. Then, in weight $N$, the kernel of $\partial_{<N}$ is at most one dimensional:
\[\ker\partial_{<N}\cap \mathcal{L}_N = \Q\zl(N)\]
where we note that $\zl(2n)=0$.
\end{prop}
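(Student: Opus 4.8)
The plan is to dualise and reduce the statement, via the isomorphism recalled above, to Brown's theorem that the motivic Lie algebra is free. That isomorphism $(\mathcal{L},\partial)\cong\bigl(\Q\langle f_3,f_5,\ldots\rangle_{>0}/\Q\langle f_3,f_5,\ldots\rangle^{\sh 2}_{>0},\partial_{\decon}\bigr)$ exhibits $\mathcal{L}$ as the cofree Lie coalgebra on $V:=\bigoplus_{k\ge 1}\Q\,\zl(2k+1)$, where $\zl(2k+1)$ is the single‑letter cogenerator $\overline{f_{2k+1}}$ in weight $2k+1$ and $\zl(2n)=0$ (matching $\zm(2n)\in(\zm(2))$); equivalently, the graded dual $\gmot=\mathcal{L}^{\vee}$ is the free Lie algebra on one generator $\sigma_{2k+1}$ in each odd degree $2k+1\ge 3$. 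Under this duality the vanishing of $\partial_r\xi$ on $\mathcal{L}_N$ is exactly the condition that $\xi$ annihilate $[\gmot_r,\gmot_{N-r}]$.

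First I would note that on $\mathcal{L}_N$ the component $\partial_{2r+1}$ vanishes identically once $2r+1\ge N$, since its target $\mathcal{L}_{2r+1}\otimes\mathcal{L}_{N-2r-1}$ is then zero; hence $\ker\partial_{<N}\cap\mathcal{L}_N=\bigcap_{r\ge 1}\ker\partial_{2r+1}\cap\mathcal{L}_N$, which dually is the annihilator in $\mathcal{L}_N$ of $\sum_{r\ge 1}[\gmot_{2r+1},\gmot]_N$, i.e.
\[ \ker\partial_{<N}\cap\mathcal{L}_N \;=\; \Bigl(\gmot_N\Big/{\textstyle\sum_{r\ge 1}}[\gmot_{2r+1},\gmot]_N\Bigr)^{\!\vee}. \]
The one point that needs an argument is the identity $\sum_{r\ge 1}[\gmot_{2r+1},\gmot]=[\gmot,\gmot]$: since $\gmot$ is generated by elements of odd degree it is linearly spanned by right‑normed brackets $[\sigma_{i_1},[\sigma_{i_2},[\cdots,\sigma_{i_m}]\cdots]]$ of generators, and those with $m\ge 2$ span $[\gmot,\gmot]$; each such bracket equals $[\sigma_{i_1},x]$ with $\sigma_{i_1}$ of odd degree, hence lies in $\sum_{s\text{ odd}}[\gmot_s,\gmot]$. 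Feeding this back gives $\ker\partial_{<N}\cap\mathcal{L}_N=(\gmot_N/[\gmot,\gmot]_N)^{\vee}=V_N$, which is $\Q\,\zl(N)$ for odd $N$ and $0=\Q\,\zl(N)$ for even $N$; that also recovers $\ker\partial_{<N}\cap\mathcal{L}_N\ni\zl(N)$ directly.

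I do not expect a serious obstacle — the argument is short — but the step at which a more naive approach tends to stall is the implication "$\partial_{2r+1}\xi=0$ for all odd $2r+1<N$" $\Rightarrow$ "$\partial\xi=0$". For odd $N$ this is automatic from the antisymmetry $\partial_{2t}=-\tau\partial_{N-2t}$ of the cobracket, but for even $N$ the even‑weight components of $\partial$ are not forced by antisymmetry, and one genuinely needs the freeness of $\gmot$ on odd‑degree generators (equivalently the identity $[\gmot,\gmot]=\sum_{s\text{ odd}}[\gmot_s,\gmot]$) rather than any explicit manipulation of Lyndon words or of the deconcatenation combinatorics — where, incidentally, individual components can cancel (e.g.\ between $\overline{f_3f_5f_7}$ and $\overline{f_3f_7f_5}$ in the $\partial_{\decon,3}$‑component) and would make a direct combinatorial argument fiddly. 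An alternative that avoids the dualisation is to lift $\xi\in\mathcal{L}_N$ to $\mathcal{H}_N$ and quote \autoref{thm:coaction:identities} directly, at the cost of bookkeeping the ideal $(\zm(2))$ and the products that arise under the infinitesimal coactions $\D_r$.
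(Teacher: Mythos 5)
Your proof is correct and takes a cleaner, dual route to the same underlying fact — the freeness of $\gmot\cong\Lie[\sigma_3,\sigma_5,\ldots]$ on odd-degree generators. The paper works on the coalgebra side: it writes $\xi$ in a Lyndon-word basis of $L$ as $\xi=\sum_r f_{2r+1}\nu_r+cf_N$ with $\nu_r\in\Q\langle f_{2r+1},f_{2r+3},\ldots\rangle$, uses the Lyndon property to ensure no monomial of $\nu_r$ ends in $f_{2r+1}$, and then reads off $(f_{2r+1}^\vee\otimes 1)\circ\partial_{2r+1,\decon}\xi=\nu_r$, forcing $\nu_r=0$. You instead dualise to $\gmot$ and observe that, being generated in odd degree, $\gmot$ is spanned by right-normed brackets $[\sigma_{i_1},x]$ with $\sigma_{i_1}$ of odd degree $\geq 3$, so that $\sum_{r\geq 1}[\gmot_{2r+1},\gmot]=[\gmot,\gmot]$; hence $\xi\in\ker\partial_{<N}\cap\mathcal{L}_N$ is precisely a functional annihilating $[\gmot,\gmot]_N$, and the kernel is $(\gmot_N/[\gmot,\gmot]_N)^\vee$, one-dimensional for odd $N\geq 3$ and zero for even $N$. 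The two arguments are dual to one another and both ultimately rest on the same structural input, but your spanning argument via right-normed brackets sidesteps the fiddly bookkeeping of which Lyndon words can end in $f_{2r+1}$. You also correctly identify the real content: for even $N$ the vanishing of the odd components of $\partial$ is not a formal consequence of antisymmetry, and this is exactly where both proofs must invoke odd-degree generation of $\gmot$.
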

\begin{proof}
It is known $\mathcal{L}$ is isomorphic to $L$ the Lie coalgebra of indecomposables of $\Q\langle f_3,f_5,,\ldots\rangle$ with respect to the shuffle product, which is the cofree Lie coalgebra with cogenerators $f_3,f_5,\ldots$. Choosing an isomorphism $\phi$ such that $\phi(\zl(2n+1))=f_{2n+1}$, it suffices to show that
\[\ker\partial_{<N}\cap L_N = \Q f_N,\]
where we take $f_{2n} \coloneqq 0$.

As $L$ is graded by $f$-degree, the vanishing of $\partial_{<N}$ is equivalent to the vanishing of the full cobracket $\partial$ followed by projection onto odd weight in the first component. This composition is dual to the Lie bracket
\[L^\vee_{\mathrm{odd}}\otimes L^\vee \to L^\vee\]
where $L^\vee$ is the free Lie algebra on $f_3^\vee,f_5^\vee,\ldots$. As this map is surjective onto the $f$-degree at least 2 part of $L^\vee$, this implies that $\partial_{<N}$ is injective on the $f$-degree at least 2 part of $L$, and hence the kernel is spanned in weight $N$ by $f_N$.
\end{proof}

\begin{remark}
It is worth noting that this formalism for motivic multiple zeta values extends to more general motivic iterated integrals, in particular alternating motivic MZVs \cite{GlanoisBasis16}. We will need this extension for the results of \autoref{sec:mot2242}, and will introduce the necessary results and concepts as needed.
\end{remark}

\subsection{The motivic Lie algebra}
 Elements of $\mathcal{L}$ may be viewed as motivic multiple zeta values, modulo products. By considering the weight-graded dual, we obtain a Lie algebra $\gmot$, called the motivic Lie algebra. From the theory of mixed Tate motives and Tannakian categories, this Lie algebra is equal to a subspace of $\qpoly$, equipped with the Ihara bracket $\{\cdot,\cdot\}$ \cite{BrownMTM12,iharalin}. Via the pairing
 \[\langle I^\lmot(0;a_1,\ldots,a_m;1), e_{i_1}\ldots e_{i_n}\rangle = \delta_{m,n}\delta_{a_1,i_1}\ldots \delta_{a_m,i_m}\,,\]
 elements of $\gmot$ may be viewed as encoding relations among elements of $\mathcal{L}$. For example, in weight $5$, $\gmot$ is spanned by
 \[\sigma_5 = e_1e_0^4 + \frac{9}{2}e_1e_0^2e_1e_0+\cdots\]
 from which we can conclude that
 \[\zm(3,2) \equiv \frac{9}{2}\zm(5) \pmod{\text{products}} \,.\]

 As such, describing relations among motivic MZVs (up to products) is equivalent to describing the elements of $\gmot$. In particular, to describe all such relations, it would suffice to describe explicit generators for $\gmot$. It is known \cite{delmixedtate} that the motivic Lie algebra is isomorphic to a free Lie algebra
 \[\gmot\cong \Lie[\sigma_3,\sigma_5,\ldots]\,,\]
 with generators $\sigma_{2k+1}$ in every odd weight greater than 1. However, this isomorphism is non-canonical, and there does not exist an explicit canonical choice of representatives of $\sigma_{2k+1}$. However, we can somewhat remedy this by considering graded relations among motivic MZVs for a certain filtration. 
 
\subsection{The block filtration}\label{sec:blockdecomp}
 In \cite{BrownLetter16}, Brown proposed a new filtration on the space of convergent motivic MZVs, based on the work of the first author \cite{CharltonBlock21}. This was expanded to a filtration on all motivic MZVs by the second author in \cite{KeilthyBlock21, KeilthyThesis20}, in which the associated graded algebra - and relations therein - is investigated. In this section, we provide a brief summary of this filtration and relations in the associated graded algebra.
 
 Call a word in two letters $\{a,b\}$ alternating if it contains no subword of the form $aa$ or $bb$. As noted in \cite{CharltonBlock21}, every word in $\{a,b\}$ then has a unique factorisation into alternating words of maximal length. This allows us to uniquely determine a word $w$ by its first letter and the lengths of the alternating blocks in this factorisation $(x;\ell_1,\ldots,\ell_n)$, $x\in\{a,b\}$. We call this sequence the block decomposition of the word $w$.
 
 We define the block degree $\degb(w)$ of a word $w$ to be the number of instances of subwords of the form $aa$ or $bb$ in $w$. This allows us to define the block filtration on the vector space $\Q\langle a,b\rangle$ by defining the $n$\textsuperscript{th} part
 \[
    \mathcal{B}_n\Q\langle a,b\rangle \coloneqq\langle w\mid \degb(w)\leq n\rangle_\Q
 \]
 to be the vector subspace spanned by words of degree at most $n$.
 
 As motivic iterated integrals, and hence motivic MZVs, may be viewed as a quotient of $\qpoly$ via the map
 \[e_{i_0}e_{i_1}\ldots e_{i_{n+1}}\mapsto \imot(i_0;i_1,\ldots,i_n;i_{n+1})\]
 the space of motivic MZVs inherits the block filtration. We may also define the block degree of an iterated integral, by taking the block degree of the associated word. This turns out to be a very natural filtration to consider, satisfying a number of nice properties, the proofs of which we shall either sketch here, or may be found in \cite{KeilthyBlock21, KeilthyThesis20}.
 
 \begin{prop}
 The block filtration is equal to the coradical filtration induced by the motivic coaction. Furthermore, when restricted to the Hoffman basis, it is the level filtration of Brown \cite{BrownMTM12}. Hence, any MZV of block degree $N$ may be written as a linear combination of Hoffman MZVs with at most $N$ threes.
 \end{prop}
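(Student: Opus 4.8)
The plan is to prove the three assertions in sequence, with the identity of the block and coradical filtrations — the main structural result of \cite{KeilthyBlock21,KeilthyThesis20}, building on \cite{CharltonBlock21} — as the crux; I sketch its proof. For \emph{block $\subseteq$ coradical}, I would use the standard description of the coradical filtration via iterated infinitesimal coactions, as in \cite{BrownMTM12}: $C_n\mathcal{H}$ is the subspace annihilated by every $(n{+}1)$-fold composite $(\id^{\otimes n}\otimes\D_{r_{n+1}})\circ\cdots\circ(\id\otimes\D_{r_2})\circ\D_{r_1}$ with the $r_i$ odd. Thus it suffices to show each $\D_{2r+1}$ maps $\mathcal{B}_n\mathcal{H}$ into $\mathcal{L}_{2r+1}\otimes\mathcal{B}_{n-1}\mathcal{H}$, together with the trivial fact that $\mathcal{B}_0\mathcal{H}\subseteq\Q[\zm(2)]$ is killed by all $\D_{2r+1}$ (products, and elements of weight $<2r+1$, die). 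The nontrivial point I would extract from \eqref{eqn:derivation}: each term of $\D_{2r+1}\imot(a_0;a_1,\dots,a_m;a_{m+1})$ excises a contiguous window $a_{k+1},\dots,a_{k+2r+1}$ of the interior word and glues the flanking letters $a_k$ and $a_{k+2r+2}$. Whenever the excised factor $I^\lmot(a_k;a_{k+1},\dots,a_{k+2r+1};a_{k+2r+2})$ is nonzero in $\mathcal{L}$, the equal-boundaries relation of \autoref{def:Hn} forces $a_k\ne a_{k+2r+2}$, so the glue creates no new repeated-letter pair; and if the length-$(2r{+}3)$ string $a_k,\dots,a_{k+2r+2}$ contained no repeated pair it would be alternating, hence (being of odd length) have equal endpoints $a_k=a_{k+2r+2}$, a contradiction. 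So at least one repeated pair inside the string is destroyed while none is created, and the block degree of the glued word drops by at least one. This gives $\mathcal{B}_n\mathcal{H}\subseteq C_n\mathcal{H}$.

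Next, \emph{block degree equals level on the Hoffman basis}. For $k_i\in\{2,3\}$, \eqref{eqn:zasint} presents $\zm(k_1,\dots,k_r)$ as $(-1)^r\imot(0;1,\{0\}^{k_1-1},\dots,1,\{0\}^{k_r-1};1)$; prepending $e_0$ and appending $e_1$ to the interior word gives $e_0\,e_1e_0^{k_1-1}\,e_1e_0^{k_2-1}\cdots e_1e_0^{k_r-1}\,e_1$. In this word the leading $e_0$ is followed by $e_1$, every interior $e_1$ is followed by an $e_0$, and every block junction is $e_0e_1$; hence the only repeated-letter pairs are the $e_0e_0$'s inside blocks coming from arguments equal to $3$. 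So $\degb\zm(k_1,\dots,k_r)=\#\{i:k_i=3\}$, which is Brown's level.

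Finally I would \emph{conclude by a dimension count}. By \cite{BrownMTM12} the Hoffman MZVs form a $\Q$-basis of $\mathcal{H}$, and the comodule isomorphism $\mathcal{H}\cong\Q\langle f_3,f_5,\dots\rangle\otimes\Q[f_2]$ carries $C_n\mathcal{H}$ onto the span of $f$-words using at most $n$ odd letters, whose weight-graded dimension has generating series $\tfrac{1}{1-x^2-x^3y}$ with $y$ tracking coradical degree; the number of Hoffman words of weight $k$ with at most $n$ threes has the same generating series. By the previous paragraph the Hoffman MZVs with $\le n$ threes lie in $\mathcal{B}_n\mathcal{H}$, by the first paragraph $\mathcal{B}_n\mathcal{H}\subseteq C_n\mathcal{H}$, and in each weight they are linearly independent of cardinality exactly $\dim C_n\mathcal{H}$; hence all the inclusions $\langle\zm(w): w\text{ has }\le n\text{ threes}\rangle\subseteq\mathcal{B}_n\mathcal{H}\subseteq C_n\mathcal{H}$ are equalities. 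This simultaneously gives $\mathcal{B}_n\mathcal{H}=C_n\mathcal{H}$, identifies the block filtration with the level filtration on the Hoffman basis, and shows that every MZV of block degree $\le n$ is a $\Q$-linear combination of Hoffman MZVs with at most $n$ threes.

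The step I expect to be the main obstacle is the reverse inclusion $C_n\mathcal{H}\subseteq\mathcal{B}_n\mathcal{H}$: the combinatorial lemma of the first paragraph only bounds coradical degree by block degree from one side, so exactness genuinely rests on the dimension bookkeeping above (equivalently, on exhibiting for each block-degree-$n$ word an explicit nonvanishing $n$-fold iterated coaction). Within the first paragraph, the delicate ingredient is the parity input: it is precisely because one uses the odd coactions $\D_{2r+1}$ that an alternating window together with its flanks, of odd length $2r{+}3$, is forced to have equal endpoints — and that is exactly what drives the strict drop in block degree.
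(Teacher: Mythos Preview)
The paper itself does not prove this proposition; it simply defers to \cite{KeilthyBlock21,KeilthyThesis20}. Your argument is correct and is exactly the strategy one finds there: the equal-boundaries plus parity observation showing each nonvanishing term of $\D_{2r+1}$ strictly lowers block degree, the direct identification of block degree with the number of threes on Hoffman words, and the $f$-alphabet dimension count to close the sandwich $\langle\text{Hoffman}_{\leq n}\rangle\subseteq\mathcal{B}_n\mathcal{H}\subseteq C_n\mathcal{H}$.

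One point worth tightening: your description of $C_n\mathcal{H}$ as the joint kernel of all $(n{+}1)$-fold iterates of the odd $\D_{2r+1}$ is true but not entirely immediate, since $\D_r$ projects the left factor to the indecomposables $\mathcal{L}_r$ rather than to the primitives $\Q f_r$, and these differ once $r\geq 8$. The quickest justification runs through the isomorphism $\mathcal{H}\cong T(V)\otimes\Q[f_2]$: for $\xi$ with top word-length $m$, the $\mathcal{L}_s\otimes V^{\otimes m-1}$ component of $\D_s(\xi)$ is exactly the ``peel off the first letter $f_s$'' map on $\xi_m$, and single letters survive in $\mathcal{L}$ since they are never shuffle products; hence $\xi_m\neq 0$ forces some $\D_s(\xi)\notin\mathcal{L}_s\otimes C_{m-2}$. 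Alternatively, and closer to how \cite{KeilthyBlock21} phrases it, one checks directly from \eqref{eqn:coaction} that the block filtration is motivic, $\Delta\mathcal{B}_n\mathcal{H}\subset\sum_{i+j=n}\mathcal{B}_i\mathcal{A}\otimes\mathcal{B}_j\mathcal{H}$ (for nonvanishing terms the repeated pairs of the original word partition between the excised blocks and the skeleton), and then $\mathcal{B}_n\subset C_n$ follows by induction from $\mathcal{B}_0\mathcal{H}=\Q[\zm(2)]=C_0\mathcal{H}$.
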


\begin{lem}[\cite{CharltonBlock21}]\label{lem:blockparity}
All MZVs of block degree $b$ and weight $N$, with $b$ and $N$ of opposite parity, vanish.
\end{lem}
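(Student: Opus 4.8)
The claim amounts to the vanishing of the weight-$N$ part of $\gr^{\mathcal{B}}_b\mathcal{H}$ whenever $b$ and $N$ have opposite parity, and the plan is to derive this from the purely combinatorial fact that \emph{every} motivic iterated integral symbol has block degree congruent modulo $2$ to its weight. Granting that fact, $\mathcal{B}_b\mathcal{H}\cap\mathcal{H}_N$ is spanned (the block filtration being compatible with the weight grading) by the $\zm(w)$ of weight $N$ with $\degb(w)\le b$, and when $b\not\equiv N\pmod 2$ each such $w$ automatically satisfies $\degb(w)\le b-1$; hence $\mathcal{B}_b\mathcal{H}\cap\mathcal{H}_N=\mathcal{B}_{b-1}\mathcal{H}\cap\mathcal{H}_N$, so the image of every weight-$N$ element in $\gr^{\mathcal{B}}_b\mathcal{H}$ is zero, which is the asserted vanishing.

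To establish the combinatorial fact I would argue directly. A weight-$N$ generator of $\mathcal{H}$ is, up to sign, $\imot(0;a_1,\dots,a_N;1)$, whose associated word $v=e_0e_{a_1}\cdots e_{a_N}e_1\in\qpoly$ has length $N+2$, begins with $e_0$, and ends with $e_1$. Write $v$ as the concatenation $R_1R_2\cdots R_r$ of its maximal constant runs. Consecutive runs use different letters (otherwise they would not be maximal), so the runs alternate strictly between $e_0$-runs and $e_1$-runs; since $R_1$ is an $e_0$-run and $R_r$ is an $e_1$-run, $r$ is even. On the other hand $\degb(v)$ is the number of positions $i$ with $v_i=v_{i+1}$, and a run of length $\ell$ contributes exactly $\ell-1$ of these, so
\[ \degb(v)=\sum_{j=1}^{r}\bigl(\abs{R_j}-1\bigr)=\abs{v}-r=(N+2)-r\equiv N \pmod 2, \]
using that $r$ is even.

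I do not anticipate a genuine obstacle: the argument is elementary, and the only point needing a word of care is the convention attaching a word to a non-admissible or regularised iterated integral (where $a_1=0$ or $a_N=1$, so $v$ starts or ends with a run longer than one letter) --- but the run count uses nothing beyond the first and last letters of $v$, so those cases are covered without change. An alternative, equally short, route is to run the same parity count through the Hoffman basis: there the block filtration is the ``number of $3$'s'' filtration recalled above, and a Hoffman word of weight $N$ and depth $d$ satisfies $N=2d+\#\{3\text{'s}\}$, so its block degree $\#\{3\text{'s}\}\equiv N\pmod 2$; since the Hoffman elements of block degree $\le b$ span $\mathcal{B}_b\mathcal{H}$, the filtration statement again reduces to this single congruence.
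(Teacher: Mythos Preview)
Your proof is correct. Both you and the paper reduce the lemma to the same parity count, but you run that count through the \emph{dual} decomposition: the paper factors the word $e_{i_0}\cdots e_{i_{N+1}}$ into maximal \emph{alternating} blocks and argues that the last letter coincides with the last letter of an alternating word of length $N+2-b$ starting at $e_{i_0}$ (hence equals $e_{i_0}$ when $N+2-b$ is odd, forcing equal boundaries), whereas you factor into maximal \emph{constant} runs and observe that $\degb(v)=\abs{v}-r$ with $r$ even because the first run is $e_0$ and the last $e_1$. These are two sides of the same coin --- your $r$ is exactly the paper's $N+2-b$ --- so neither argument buys anything the other lacks, though your formulation perhaps makes the link between $\degb$ and the run count slightly more transparent. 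Your extra framing via $\gr^{\mathcal{B}}_b\mathcal{H}$ and the Hoffman-basis alternative are correct but unnecessary: the paper simply notes that no generator $\imot(0;\dots;1)$ of the stated block degree and weight exists, which is what your combinatorial fact gives directly.
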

\begin{proof}
If $\imot(i_0;i_1,\ldots,i_n;i_{N+1})$ has block degree $b$, then the final letter of $e_{i_0}\ldots e_{i_{N+1}}$ must be equal to the final letter of the alternating word of length $N+2-b$, beginning with $e_{i_0}$. In particular, we must have that $e_{i_{N+2-b}}=e_{i_0}$ if $N+2-b$ is odd, i.e. $N$ and $b$ are off opposite parity. Hence $\imot(i_0;i_1,\ldots,i_n;i_{n+1})=0$, as it has the same start and end points of the integral.
\end{proof}

Analogously to depth graded MZVs \cite{BrownDepthGraded21}, we may consider the associated graded algebra 
\[\gr^\mathcal{B}\mathcal{H}\coloneqq\bigoplus_{n\geq 0} \mathcal{B}_n\mathcal{H}/\mathcal{B}_{n-1}\mathcal{H}\]
and consider relations among block graded multiple zeta values. Much like relations among motivic MZVs, modulo products, in the motivic Lie algebra $\gmot$, relations among block graded MZVs, modulo products, are encoded in the block Lie algebra
\[\bg\coloneqq\bigoplus_{n\geq 0}\mathcal{B}^n\gmot/\mathcal{B}^{n+1}\gmot\]
where the filtration on $\gmot$ is induced by the filtration
\[\mathcal{B}^n\qpoly \coloneqq\langle w\mid \degb(w)\geq n\rangle_\Q\]
via the embedding $\gmot\hookrightarrow e_0\qpoly e_1$. We denote by $\bg_n$ the block degree $n$ part, via the embedding into $\mathcal{B}^n\qpoly/\mathcal{B}^{n+1}\qpoly$.

\begin{prop}
As Lie algebras $\gmot\cong\bg$. In particular, they are (non-canonically) isomorphic to $ \Lie[\sigma_3,\sigma_5,\ldots]$.
\end{prop}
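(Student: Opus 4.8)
The plan is to derive the isomorphism from the identification, recalled above, of the block filtration with the coradical filtration, together with Brown's presentation of the coalgebra $\mathcal{A}$. Since the block filtration equals the coradical filtration, and the coradical filtration is intrinsic to the coalgebra structure, Brown's isomorphism $(\mathcal{A},\Delta)\cong(\Q\langle f_3,f_5,\ldots\rangle,\Delta_{\decon})$ (see \cite{BrownMTM12}) is automatically an isomorphism of block-filtered coalgebras. Here $\Q\langle f_3,f_5,\ldots\rangle$ also carries a central generator $f_2$ coming from $\zm(2)$, which does not enter $\Delta_{\decon}$ and does not raise the block degree, matching the fact that $\zm(2)=-\imot(0;1,0;1)$ has block degree $0$. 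Now the coradical filtration of a deconcatenation coalgebra is the filtration by word length in the non-central letters $f_{2k+1}$; this filtration is already a grading and $\Delta_{\decon}$ is homogeneous for it, so the associated graded coalgebra with respect to the coradical filtration is canonically $\Q\langle f_3,f_5,\ldots\rangle$ again. Hence $\gr^{\mathcal{B}}\mathcal{A}\cong\mathcal{A}$ as graded coalgebras.

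From here I would pass to indecomposables. The block (coradical) filtration of the connected Hopf algebra $\mathcal{A}$ is multiplicative, so forming the Lie coalgebra of indecomposables commutes with taking the associated graded; thus $\gr^{\mathcal{B}}\mathcal{L}\cong\mathcal{L}$ as graded Lie coalgebras, with cobracket induced by antisymmetrising $\Delta_{\decon}$. Everything is finite dimensional in each weight, so we may dualise weight by weight to obtain $\bg\cong\gmot$ as Lie algebras, provided we know that the combinatorial filtration $\mathcal{B}^\bullet\gmot$ used to define $\bg$ (via $\gmot\hookrightarrow e_0\qpoly e_1$ and $\mathcal{B}^\bullet\qpoly$) is exactly the filtration of $\gmot$ dual to the coradical filtration of $\mathcal{L}$. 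This last compatibility — that the combinatorial block degree on words matches coradical degree on both the comodule and on the associated graded Lie algebra — is part of the block-filtration package of \cite{KeilthyBlock21}, and I would simply invoke it. The final sentence is then immediate: $\gmot\cong\Lie[\sigma_3,\sigma_5,\ldots]$ was recalled above from \cite{delmixedtate}, so the same holds for $\bg$; in particular freeness of $\bg$ falls out as a byproduct rather than being assumed.

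A variant argument, closer to the Lie-algebra language of the surrounding sections, runs as follows: choose for each odd weight $2k+1\geq 3$ a lift $\sigma_{2k+1}\in\gmot$ of a free generator of $\bg$ (available once one knows $\bg$ is free, by \cite{KeilthyBlock21}, with generators in block degree one). Since the block filtration on $\gmot$ is exhaustive and, in each fixed weight, of bounded length, an induction on block degree shows that the $\sigma_{2k+1}$ generate $\gmot$; the resulting filtered Lie algebra map $\Lie[\sigma_3,\sigma_5,\ldots]\to\gmot$ has associated graded the identity isomorphism onto $\bg$, and so, being a filtered map of finite-dimensional filtered vector spaces in each weight, is itself an isomorphism. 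This simultaneously recovers freeness of $\gmot$ and exhibits $\gmot\cong\bg$.

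The main obstacle is not the Lie/coalgebra formalism, which is routine, but the combinatorial bookkeeping hidden in the phrase "the block filtration equals the coradical filtration": one must know that under Brown's $f$-alphabet isomorphism the block degree $\degb$ of a word in $\{e_0,e_1\}$ equals word length in the $f_{2k+1}$-letters (equivalently, the number of $3$'s in the Hoffman-basis expansion), compatibly with the comodule structure, with the passage to $\mathcal{L}$, and with weight-graded duality to $\gmot$. In the write-up I would cite \cite{KeilthyBlock21} for this compatibility and present only the short formal deduction of the first two paragraphs.
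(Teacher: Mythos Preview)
Your outline is correct and is essentially the argument given in \cite{KeilthyBlock21,KeilthyThesis20}, which is also what the present paper does: it states the proposition without proof and refers to those sources. The key input is exactly the one you isolate, namely that the block filtration on $\mathcal{H}$ (hence on $\mathcal{A}$ and $\mathcal{L}$) agrees with the coradical filtration; once this is known, Brown's isomorphism $(\mathcal{A},\Delta)\cong(\Q\langle f_3,f_5,\ldots\rangle,\Delta_{\decon})$ is automatically filtered, the right-hand side is already graded by word length, and passing to indecomposables and weight-graded duals gives $\bg\cong\gmot$. Your check that the combinatorial descending filtration $\mathcal{B}^n\gmot$ is the annihilator of $\mathcal{B}_{n-1}\mathcal{L}$ under the diagonal pairing is the honest point that makes the duality step go through.

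One small caution about your variant argument: there you \emph{assume} freeness of $\bg$ (with generators in block degree one) in order to lift generators and deduce $\gmot\cong\bg$. As stated, the proposition is meant to \emph{conclude} that $\bg$ is free (via $\gmot\cong\Lie[\sigma_3,\sigma_5,\ldots]$ from \cite{delmixedtate}), so the variant is logically backwards unless you cite an independent proof of freeness of $\bg$ from \cite{KeilthyBlock21}. Your first argument avoids this and is the cleaner one to present.
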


\begin{prop}
Let $\{\sigma_{2k+1}\}_{k\geq 1}$ be a choice of generators for $\gmot$. Then the block degree $1$ piece of the image of $\{\sigma_{2k+1}\}_{k\geq 1}$ in $\qpoly$ is independent of the choice of generators. In particular, we can make a canonical identification between the image of $\bg$ in $\qpoly$ and the free Lie algebra $\Lie[\sigma_3,\sigma_5,\ldots]$.
\end{prop}

\begin{remark}\label{rem:depthgradedfail}
It is in these two results that we see a stark contrast to the case of depth graded multiple zeta values \cite{BrownDepthGraded21}. Analogously to the above, one can consider the associated graded Lie algebra for the depth filtration, induced by the $e_1$-degree of words in $\qpoly$. As for the block graded case, the image of the generators $\{\sigma_{2k+1}\}_{k\geq 1}$ in depth $1$ is canonical. However, unlike the block graded case, the depth graded Lie algebra $\mathfrak{dg}$ is not free, having quadratic relations and extra generators in depth 4. These quadratic relations are algebraically well understood \cite{BrownDepthGraded21,SchnepsPoisson06}, and give a somewhat mysterious connection to modular forms. Indeed, the quadratic relations are exactly encoded in the period polynomials of cusp forms. This is a relationship that we can discuss in a new light using the approaches of this paper.
\end{remark}

As the image of $\bg$ in $\qpoly$ lies in $e_0\qpoly e_1$, the block decomposition gives an injection of vector spaces
\[\bg \to \bigoplus_{n\geq 1}\Q[x_1,\ldots,x_n]\]
obtained by sending a word $w=e_0\ldots$ to $x_1^{\ell_1}\ldots x_n^{\ell_n}$, where $(e_0;\ell_1,\ldots,\ell_n)$ is the block decomposition of $w$. The image of $\bg_n$ under this map lies in $(x_1-x_{n+1})x_1\ldots x_{n+1}\Q[x_1,\ldots,x_{n+1}]$. We denote by $\rbg_n$ the image of $\bg_n$ divided by $(x_1-x_{n+1})x_1\ldots x_{n+1}$, and let $\mathfrak{rbg}\coloneqq\bigoplus_{n\geq 1}\rbg_n$. Thus, we have reduced the problem of describing relations among block graded MZVs modulo products to describing $\rbg$ as a subspace of $\bigoplus_{n\geq 2}\Q[x_1,\ldots,x_n]$.

In \cite{KeilthyBlock21}, a number of relations are found. In particular, elements of $\rbg$ satisfy a functional equation coming from shuffle regularisation; a differential equation; and have a dihedral symmetry.

\begin{prop}
If $f(x_1,\ldots,x_n)\in\rbg$, then
\[ f(x_1,\ldots,x_n)=f(x_n,\ldots,x_1)=(-1)^{n+1} f(x_2,\ldots,x_n,x_1)\,.\]
\end{prop}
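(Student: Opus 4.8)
The plan is to prove the two generating symmetries of the dihedral group separately --- the reflection $(x_1,\ldots,x_n)\mapsto(x_n,\ldots,x_1)$ and the signed rotation $(x_1,\ldots,x_n)\mapsto(x_2,\ldots,x_n,x_1)$ --- as statements about the block Lie algebra $\bg\cong\gmot$ sitting inside $\qpoly$, and then transport them to $\rbg$ by dividing out the common factor $(x_1-x_{n+1})x_1\cdots x_{n+1}$. In both cases one realises the symmetry of block polynomials as an explicit operation on the underlying words $w=e_0\cdots e_1$ --- its effect being read off directly from the block decomposition $(e_0;\ell_1,\ldots)$ --- and then shows that this operation preserves $\gmot$ (exactly, or only modulo lower block degree) by appealing to a known property of motivic multiple zeta values; the perfect pairing between $\bg$ and block-graded MZVs modulo products lets one pass freely between relations among the latter and stability statements for the former.

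For the reflection, let $D\colon\qpoly\to\qpoly$ reverse a word and then interchange $e_0\leftrightarrow e_1$. Since reversal and complementation each respect the factorisation of a word into maximal alternating blocks, $D$ carries a word with block decomposition $(e_0;\ell_1,\ldots,\ell_n)$ to one with block decomposition $(e_0;\ell_n,\ldots,\ell_1)$, so on block polynomials $D$ is exactly variable-reversal. But applied to a motivic iterated integral $D$ is the duality relation of \autoref{def:Hn} (reversal of paths composed with functoriality), which holds identically in $\mathcal H$; dualising, every element of $\gmot$ is a $\pm1$-eigenvector of $D$, and this descends to $\bg$. It then remains to track signs --- combining the $D$-eigenvalue, the sign the prefactor $(x_1-x_n)$ picks up under variable-reversal, and the coincidence of the parities of weight and block degree on $\bg_{n-1}$ (which is spanned by brackets of the odd-weight generators $\sigma_{2k+1}$, compare \autoref{lem:blockparity}) --- and these conspire, after dividing out the prefactor, to leave $f(x_1,\ldots,x_n)=f(x_n,\ldots,x_1)$.

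The signed rotation carries the real content. Cyclically permuting the block lengths of $w$ amounts to sliding the ``seam'' of the circular block word $e_0\cdots e_1$ one block around the circle; this is \emph{not} an identity among ungraded motivic MZVs and only becomes true modulo lower block degree, so it has to be proved with the motivic coaction rather than by rearranging boundary points. Concretely, form the combination $R_w$ which is the block-graded class of $w$ minus the appropriately signed block-graded class of its block-cyclic rotation --- first checking combinatorially that the rotated block sequence is again realised by an honest (possibly shuffle-regularised) iterated integral, adjusting by reversal of paths where a block of the wrong parity demands it --- and show $R_w=0$ in $\gr^{\mathcal B}\mathcal H$. By the block-graded refinement of \autoref{thm:coaction:identities} and \autoref{prop:coaction:modprod} it suffices to check $\partial_{<N}R_w=0$ and that $R_w$ is not a nonzero multiple of $\zl(N)$, the latter being immediate from \autoref{lem:blockparity} and block-degree counting. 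Evaluating the infinitesimal coaction \eqref{eqn:derivation} on a block word and rewriting it in block coordinates gives a local rule --- excising a short alternating sub-block either trims a single $\ell_i$ or fuses two adjacent ones, the latter lowering block degree --- under which the coaction of the rotated word equals the rotation of the coaction of $w$ up to the bookkeeping sign and terms of strictly smaller block degree. Feeding this into an induction on the weight, with \autoref{lem:blockparity} and block degree one (where $\rbg_1$ is spanned by constants and the claim degenerates) as base cases, collapses $\partial_{<N}R_w$ to lower-weight instances of the very same symmetry, which vanish by the inductive hypothesis.

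Finally, once the reflection and a single rotation are known, the whole dihedral group acts on $\rbg_{n-1}$ through a sign character, and one checks this character is self-consistent --- the relation $\rho c\rho=c^{-1}$ forces $(-1)^{n(n+1)}=1$, which is automatic --- so no further relations need be imposed and the statement follows. I expect the third step to be the main obstacle: matching the motivic coaction formula against the block-cyclic rotation, and in particular controlling the strictly-lower-block-degree error terms tightly enough that the weight induction closes, is where the work concentrates; the reflection is essentially a repackaging of duality, and the sign-counting throughout, though fiddly, is mechanical.
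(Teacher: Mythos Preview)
The paper does not give its own proof of this proposition; it is quoted from \cite{KeilthyBlock21} without argument, so there is nothing in the present paper to compare your proposal against directly.

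That said, your plan is reasonable and is broadly in line with how the result is established in the reference. The reflection symmetry is exactly duality (reversal of paths composed with functoriality) transported to block coordinates, as you say; the only content there is the sign bookkeeping, which you have correctly identified as the interaction of the weight sign from path reversal, the sign picked up by the prefactor $(x_1-x_{n+1})$, and the weight--block-degree parity constraint of \autoref{lem:blockparity}. For the cyclic rotation, your proposed route --- form the difference $R_w$, annihilate it with $\partial_{<N}$ by induction on weight, using that the infinitesimal coaction in block coordinates commutes with block-cyclic rotation modulo strictly lower block degree --- is a workable strategy. As you yourself flag, the step where one verifies that the coaction of a rotated word agrees (up to sign and lower block degree) with the rotation of the coaction is the substantive part; it requires a careful case analysis of which subsequences in \eqref{eqn:derivation} lie entirely within a single block versus straddle one or more block boundaries, together with an argument that the terms straddling the seam (present in one word but not its rotation) drop in block degree.
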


It turns out these three relations, along with \autoref{lem:blockparity}, describe most relations among block graded MZVs \cite{KeilthyBlock21}. In block degree 1, the shuffle regularisation, the differential equation, and this dihedral symmetry, along with \autoref{lem:blockparity}, describe all relations among block graded motivic multiple zeta values.

\begin{prop}
The vector space $\rbg_1$ is the subspace of $\Q[x_1,x_2]$ given by polynomials $f(x_1,x_2)$ such that
\begin{alignat*}{3}
      f(0,x)   & {} = 2f(x,-x)\,,  \\
    f(-x_1,-x_2) & {}= f(x_1,x_2)\,,  && \quad\quad \text{
    \smash{{$\displaystyle\frac{\partial^2 f}{\partial x_1^2}=\frac{\partial^2 f}{\partial x_2^2} \,.$}}
    }\\
      f(x_1,x_2) & {} = f(x_2,x_1)\,, 
     \\[-1ex]
    \end{alignat*}
\end{prop}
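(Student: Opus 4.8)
The plan is to establish the two inclusions separately. Writing $V$ for the set of $f\in\Q[x_1,x_2]$ satisfying the four displayed conditions, the inclusion $\rbg_1\subseteq V$ is routine and follows directly from the relations already recalled, while $V\subseteq\rbg_1$ is the substantive half and will be obtained by solving the four conditions explicitly and comparing against the known dimension of $\bg_1$.

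For the first inclusion I would simply unwind the relations of \cite{KeilthyBlock21} in block degree $1$. The identity $f(x_1,x_2)=f(x_2,x_1)$ is the $n=1$ case of the dihedral symmetry recalled above, in which the reversal $x_n,\dots,x_1$ and the cyclic shift $x_2,\dots,x_n,x_1$ coincide and the sign $(-1)^{n+1}$ equals $1$; the identity $f(-x_1,-x_2)=f(x_1,x_2)$ is \autoref{lem:blockparity} in block degree $1$; and $\tfrac{\partial^2 f}{\partial x_1^2}=\tfrac{\partial^2 f}{\partial x_2^2}$ together with $f(0,x)=2f(x,-x)$ are the block-degree-$1$ specialisations of the differential equation and of the shuffle-regularisation functional equation. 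The only point needing care is the sign picked up when one divides an element of $\bg_1$ by the factor $(x_1-x_2)x_1x_2$, which is antisymmetric both under $x_1\leftrightarrow x_2$ and under $(x_1,x_2)\mapsto(-x_1,-x_2)$; this is precisely what turns the $\bg_1$-relations into the stated $\rbg_1$-relations.

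For the reverse inclusion I would first describe $V$ degree by degree. Take $f\in V$ homogeneous of degree $d$; the relation $f(-x_1,-x_2)=f(x_1,x_2)$ forces $d$ even. In the coordinates $u=x_1+x_2$, $v=x_1-x_2$ the differential equation becomes $\partial_u\partial_v f=0$, whence $f=a\,(x_1+x_2)^d+b\,(x_1-x_2)^d$ for scalars $a,b$, and $f(x_1,x_2)=f(x_2,x_1)$ is then automatic since $d$ is even. For $d\geq 2$ the remaining condition $f(0,x)=2f(x,-x)$ reads $(a+b)x^d=2^{d+1}b\,x^d$, i.e. $a=(2^{d+1}-1)b$, cutting the solution space down to dimension $1$; for $d=0$ it forces $f=0$. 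Hence $V$ is at most one-dimensional in each even degree $\geq 2$ and zero otherwise.

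It remains to match this bound inside $\rbg_1$. Because $\gmot\cong\bg\cong\Lie[\sigma_3,\sigma_5,\dots]$ is free on one generator in each odd weight and the image of these generators in $\bg_1$ is canonical and forms a basis (by the propositions above), $\bg_1$ is one-dimensional in each odd weight $\geq 3$ and zero otherwise. A weight-$(2k+1)$ generator contributes, under the embedding $\gmot\hookrightarrow e_0\qpoly e_1$ and projection to block degree $1$, a homogeneous polynomial of degree $2k+3$, so dividing by $(x_1-x_2)x_1x_2$ lands in $\rbg_1$ in degree $2k$; thus $\rbg_1$ is one-dimensional in each even degree $\geq 2$ and zero otherwise, matching $V$ exactly. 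Together with $\rbg_1\subseteq V$ this gives $\rbg_1=V$. The one genuinely delicate point is this last degree bookkeeping — confirming that each $\sigma_{2k+1}$ has a nonzero block-degree-$1$ component lying in $(x_1-x_2)x_1x_2\,\Q[x_1,x_2]$, and that the degree shift on dividing is exactly $3$ — since that is what makes the two one-parameter families occur in the same degrees; everything else is d'Alembert's solution of a constant-coefficient PDE followed by a single linear constraint.
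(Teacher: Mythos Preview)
The paper does not actually give its own proof of this proposition; it is quoted as a known result from \cite{KeilthyBlock21}. Your argument is the natural one and is correct: the inclusion $\rbg_1\subseteq V$ comes from the block relations (with the sign bookkeeping from dividing by $(x_1-x_2)x_1x_2$ that you flag), and the reverse inclusion follows from the dimension count you give --- solving the wave equation in $u=x_1+x_2$, $v=x_1-x_2$ to get $f=a(x_1+x_2)^d+b(x_1-x_2)^d$, and then imposing $f(0,x)=2f(x,-x)$ to pin down a one-dimensional space in each even degree $\geq 2$, which matches $\dim(\rbg_1)_{2k}=1$ coming from the single generator $\sigma_{2k+1}$ in each odd weight. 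One small wording slip: when you say ``the $n=1$ case of the dihedral symmetry,'' note that the proposition recalled in the paper indexes by the number of variables, so for $\rbg_1\subset\Q[x_1,x_2]$ this is the case of two variables; your subsequent remark about the sign change under division by $(x_1-x_2)x_1x_2$ is exactly what reconciles the $\bg$-level and $\rbg$-level versions of the symmetry, so the substance is fine.
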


Unfortunately, even in block degree 2, this is insufficient, leaving degrees of freedom linear in weight. While a remedy in block degree 2 was given in Proposition 2.8.7 of \cite{KeilthyThesis20}, it turns out that the failure of space cut out by the dihedral symmetry, differential equation, and shuffle regularisation to encode all relations in block degree 2 has an interesting connection to double zeta values, and gives an alternative source of the relations between double zeta values coming from period polynomials. This connection is explored in the next section.

\section{Block graded relations among double zeta values}\label{sec:blockdmzv}
As noted above, relations among block graded motivic multiple zeta values, modulo products, are determined by the coefficients of elements of $\bg$. However, these relations are also genuine relations among motivic multiple zeta values mod products for motivic MZVs of block degree at most 2. Observe that
\[\mathcal{B}_0\mathcal{L}=\langle \z(2n)\mid n\geq 1\rangle_\Q =\{0\}\pmod{\text{products}}\]
and so, modulo products, $\mathcal{B}_1\mathcal{L}/\mathcal{B}_0\mathcal{L}=\mathcal{B}_1\mathcal{L}$. Similarly, as MZVs of block degree 1 are necessarily of odd weight, and MZVs of block degree 2 are necessarily of even weight, block graded relations among motivic MZVs of block degree 2 are genuine relations, modulo products. As \autoref{eval:doublez2242} defines an explicit representation of double zeta values in terms of MZVs of block degree 2, relations among double zeta values are determined, modulo products, by the coefficients of elements of $\bg$. In this section, we will make this precise, and show that all relations among double zeta values are determined by the below relations.

Explicitly, following Remark 9.3 of \cite{KeilthyBlock21}, the weight $2n+2$, block degree 2 piece of $\bg$ can be identified with a subspace of $V_n\subset \Q[x_1,x_2,x_3]$, where $V_n$ is spanned by polynomials satisfying the following relations.
\begin{thm}[\cite{KeilthyBlock21}]\label{thm:block-relations}
  Define $V_n\subset\Q[x_1,x_2,x_3]$ to be the space spanned by polynomials satisfying the block relations: 
\begin{align}
\tag{Relation 0}\label{rel0} & f(\lambda x_1,\lambda x_2,\lambda x_3) = \lambda^{2n}f(x_1,x_2,x_3)\text{ for all }\lambda\in \Q \,,\\[1ex]
\tag{Relation 1}\label{rel1} & f(x_1,x_2,x_3)=f(x_2,x_3,x_1)=-f(x_3,x_2,x_1)\,,\\
\tag{Relation 2}\label{rel2} &\frac{1}{2}\big(f(0,y,z)-f(0,y,-z)\big)=f(-y,y,z)-f(y,-z,z)\,,\\
\tag{Relation 3}\label{rel3} & \diffrbg{f}=0 \,.
\end{align}
    Then $\rbg_{2,2n+2}$, weight $2n+2$ component of $\rbg_2$, is a subspace of $V_n$.
\end{thm}
As mentioned previously, this inclusion is strict. Additional relations are necessary in order to completely describe $\rbg_{2,2n+2}$ as a subspace of $\Q[x_1,x_2,x_3]$. A choice of such relations is given in Proposition 2.8.7 of \cite{KeilthyThesis20}.
For any $f(x_1,x_2,x_3)=\sum_{i+j+k=2n}\alpha_{i,j,k}x_1^ix_2^jx_3^k$, define \[f_e(x_1,x_2,x_3)\coloneqq \sum_{i+j+k=n}\alpha_{2i,2j,2k}x_1^{2i}x_2^{2j}x_3^{2k}\,.\] One may easily check that if $f(x_1,x_2,x_3)\in V_n$, then both $f_e(x_1,x_2,x_3)$ and $f(x_1,x_2,x_3)-f_e(x_1,x_2,x_3)$ are elements of $V_n$. As such,
 the linear map $f(x_1,x_2,x_3)\mapsto f_e(x_1,x_2,x_3)$ defines a projection $V_n\to V_n$.

\begin{restatable}[\cite{KeilthyThesis20}]{prop}{blockdimcount}\label{prop:blockdimensioncount}
Let $V_n$ be as above, and let $P_e:V_n\to V_n$ denote the projection $f(x_1,x_2,x_3)\mapsto f_e(x_1,x_2,x_3)$. Then
\begin{align*}
\dim \im P_e &\leq\Big\lfloor\frac{n}{3}\Big\rfloor\, ,\\
\dim \ker P_e &=\Big\lfloor\frac{n-1}{2}\Big\rfloor = \dim \gmot_{2,2n+2}\, ,
\end{align*}
where $\gmot_{2,2n+2}$ denotes the vector space spanned by $\{\sigma_k,\sigma_\ell\}$ with $k+\ell=2n+2$.
\end{restatable}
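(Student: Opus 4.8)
The plan is to treat this as a representation-theory computation. Since $P_e$ is idempotent, $V_n = \im P_e \oplus \ker P_e$, so it suffices to compute $\dim\im P_e$ and $\dim\ker P_e$ separately; the identity $\lfloor\frac{n-1}{2}\rfloor = \dim\gmot_{2,2n+2}$ is then immediate, as $\gmot\cong\Lie[\sigma_3,\sigma_5,\dots]$ is free and hence its degree-$2$, weight-$(2n+2)$ part has basis $\{\{\sigma_i,\sigma_j\}\mid 3\le i<j\text{ odd},\ i+j=2n+2\}$, which has $\lfloor\frac{n-1}{2}\rfloor$ elements. Now, the polynomials satisfying \eqref{rel0} and \eqref{rel1} are precisely the homogeneous degree-$2n$ elements of the sign-isotypic component of $\Q[x_1,x_2,x_3]$ under the $S_3$-action permuting variables, i.e.\ of $\Delta\cdot\Q[x_1,x_2,x_3]^{S_3}$ with $\Delta=(x_1-x_2)(x_2-x_3)(x_1-x_3)$. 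As $S_3$ preserves the parity pattern of a monomial and $2n$ is even, this space splits into its all-even part and its ``two-odd'' part, and these are the ambient spaces of $\im P_e$ and of $\ker P_e$. Parametrising an all-even sign-isotypic polynomial as $\delta(x_1^2,x_2^2,x_3^2)G(x_1^2,x_2^2,x_3^2)$ with $\delta(y)=(y_1-y_2)(y_2-y_3)(y_1-y_3)$ and $G$ symmetric of degree $n-3$ shows the first ambient space has dimension $p_{\le3}(n-3)$, where $p_{\le3}(m)$ counts partitions of $m$ into at most three parts; the second has dimension $p_{\le3}(2n-3)-p_{\le3}(n-3)$, and is parametrised by the $x_2$-even sector $x_1x_3(x_1^2-x_3^2)H(x_1^2,x_2^2,x_3^2)$ of its elements, $H$ symmetric under $x_1^2\leftrightarrow x_3^2$.

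On all-even sign-isotypic polynomials \eqref{rel2} holds identically --- its right-hand side because any sign-isotypic polynomial vanishes whenever two variables agree, its left-hand side because the polynomial is even in the relevant variable --- so $\im P_e$ is carved out by \eqref{rel3} alone. The operator $\diffrbg{f}$ is $S_3$-invariant and uses only even-order derivatives, so it respects both the sign-isotypic component and the parity splitting; and a direct computation shows that, up to a scalar, it factors as $(\partial_a+\partial_b+\partial_c)\partial_a\partial_b\partial_c$ in the linear coordinates dual to the three independent forms $-\partial_{x_1}+\partial_{x_2}+\partial_{x_3}$, $\partial_{x_1}-\partial_{x_2}+\partial_{x_3}$, $\partial_{x_1}+\partial_{x_2}-\partial_{x_3}$ (whose sum is $\partial_{x_1}+\partial_{x_2}+\partial_{x_3}$). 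It is therefore a composition of surjective constant-coefficient operators on $\Q[x_1,x_2,x_3]$, and averaging a preimage against the sign character of $S_3$ and projecting onto the even part (operations commuting with $\diffrbg{\cdot}$) makes it surjective from the all-even sign-isotypic part in degree $2n$ onto that in degree $2n-4$. Hence $\dim\im P_e = p_{\le3}(n-3)-p_{\le3}(n-5)$, and since $(1-t^2)\sum_m p_{\le3}(m)t^m = \frac{1}{(1-t)(1-t^3)}=\sum_j(\lfloor j/3\rfloor+1)t^j$, this equals $\lfloor\frac n3\rfloor$.

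The identical surjectivity argument for $\diffrbg{\cdot}$ on the two-odd sign-isotypic part yields $\dim\big(\ker\diffrbg{\cdot}\cap(\text{two-odd, sign-isotypic, degree }2n)\big) = \big(p_{\le3}(2n-3)-p_{\le3}(2n-7)\big)-\big(p_{\le3}(n-3)-p_{\le3}(n-5)\big)$, which simplifies (using $1-t^4=(1-t^2)(1+t^2)$) to a closed form that grows linearly in $n$. It remains to intersect with \eqref{rel2}, which here is a genuine constraint: translating it into a linear condition on the parameter $H$, one must show it has rank exactly $\lfloor\frac n2\rfloor$ on this space, whence the bookkeeping above gives $\dim\ker P_e=\lfloor\frac{n-1}2\rfloor$. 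Pinning down this rank --- that \eqref{rel2} imposes neither fewer nor more than $\lfloor\frac n2\rfloor$ independent conditions on the solutions of the other relations (a matching lower bound for $\dim\ker P_e$ can also be sought by exhibiting explicit solutions, e.g.\ reductions of Ihara brackets of block-degree-$1$ generators) --- is the step I expect to be the real obstacle. It is precisely the point where the crude representation-theoretic count collapses to the true value $\frac{n-1}{2}$, and it is the mechanism by which the period-polynomial relations among double zeta values are seen to live in block degree $2$.
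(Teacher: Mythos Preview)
Your computation of $\dim\im P_e$ is complete and correct, and takes a genuinely different route from the paper's. The paper also notes that Relation 2 is automatic on the all-even part, but then works in the basis $\{(x_1\pm x_2)^i(x_2\pm x_3)^j\}_{i+j=2n}$ for the kernel of Relation 3 and extracts the sign-isotypic dimension by a character computation: $\Tr(\id)=2n+1$, $\Tr((1\,3))=1$, and $\Tr((1\,2\,3))=\sum_i(-1)^i\binom{2n-i}{i}$, the last evaluated by a generating-function argument and a case split modulo $6$. Your approach---factoring the operator as $-(\partial_1{+}\partial_2{+}\partial_3)(\partial_1{+}\partial_2{-}\partial_3)(\partial_1{-}\partial_2{+}\partial_3)(-\partial_1{+}\partial_2{+}\partial_3)$, hence surjective on each parity-and-isotypic piece, and reading off $p_{\le3}(n-3)-p_{\le3}(n-5)=\lfloor n/3\rfloor$---is arguably cleaner and sidesteps that case analysis.

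For $\dim\ker P_e$ there is a genuine gap, which you yourself flag. You stop at ``the rank of Relation 2 must be $\lfloor n/2\rfloor$'' without establishing it, and you also do not confirm that the pre-Relation-2 dimension is exactly $n-1$ (you only say it ``grows linearly''). The paper fills both as follows. Writing $q\in\ker P_e$ satisfying Relation 3 in the basis $(x_1\pm x_2)^i(x_2\pm x_3)^j$, its $(x_1\text{-odd},\,x_2\text{-even},\,x_3\text{-odd})$ part $q_\star$ determines $q$ by cyclic symmetrisation and carries exactly $n-1$ antisymmetric parameters $\rho_{i,j}=-\rho_{j,i}$, $i+j=2n$, $i,j>0$. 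Relation 2 becomes $q_\star(z,0,y)=2q_\star(z,y,y)-2q_\star(y,z,z)$; comparing coefficients of $y^kz^l$ for $k$ odd, $k\ne l$, yields
\[
\rho_{l,k}=\sum_{\substack{0<j\le k\\ i+j=2n}}(-2)^j\binom{i}{l}\rho_{i,j}-\sum_{\substack{0<j\le l\\ i+j=2n}}(-2)^j\binom{i}{k}\rho_{i,j}\,,
\]
a system of $\lceil(n-1)/2\rceil=\lfloor n/2\rfloor$ equations. The decisive trick is to reduce modulo $2$: every term on the right is even, so the system collapses to $\rho_{l,k}\equiv0$, which is visibly of full rank over $\mathbb{F}_2$ and hence over $\Q$. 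This mod-$2$ observation is exactly the missing ingredient you anticipated.
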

We delay the proof until the following section. Denote by 
 \[\Phi_n(x_1,\ldots,x_{n+1}):=\sum_{\ell_1,\ell_2,\ldots,\ell_{n+1}} \ibl(\ell_1,\ldots,\ell_{n+1}) \otimes x_1^{\ell_1}\ldots x_{n+1}^{\ell_{n+1}} \in \gr_n^\mathcal{B}\mathcal{L}\otimes\Q[x_1,\ldots,x_{n+1}]\]
 the generating series of block degree $n$ motivic MZVs modulo products. As a consequence of Lemma 7.4 \cite{KeilthyBlock21}, 
 \[\Phi_n(x_1,\ldots,x_{n+1}) = x_1\ldots x_{n+1}(x_1-x_{n+1})\phi_n(x_1,\ldots,x_{n+1})\]
 for some $\phi_n\in \gr_n^\mathcal{B}\mathcal{L}\otimes\Q[x_1,\ldots,x_{n+1}]$. This series is then given by
 \[\phi_n(x_1,\ldots,x_{n+1}) = \sum_{k_1,\ldots,k_{n+1}\geq 0} F(k_1,\ldots,k_{n+1})\otimes x_1^{k_1}\ldots x_{n+1}^{k_{n+1}}\]
 where
 \[F(k_1,\ldots,k_{n+1}) = -\sum_{i+j=k_1}\ibl(i+1,k_2+1,\ldots,k_n+1,k_{n+1}+j+2).\]
 Conversely, we have that
 \[\ibl(\ell_1,\ldots,\ell_{n+1}) = F(\ell_1-2,\ell_2-1,\ldots,\ell_n-1,\ell_{n+1}-1) - F(\ell_1-1,\ldots,\ell_n-1,\ell_{n+1}-2).\]

Via the pairing
 \[\gr^{\mathcal{B}}_n\mathcal{L}\times \bg_n \to \Q\]
 we can view $\Phi_n$ as a linear map 
 \[\bg_n\to\Q[x_1,\ldots,x_{n+1}]\]
 the image of which is precisely the embedding described previously. We may similarly view $\phi_n$ as a linear isomorphism $\bg_n\to\rbg_n$.

 \begin{lem}\label{lem:F_to_double}
In block degree 2, the coefficients of $\phi_{3,e}(x_1,0,x_3)$ are equal to motivic double zeta values:
\[F(a,0,b) = 4\zl(a+1,b+1)\]
for $a,b$ even non-negative integers.
 \end{lem}
 \begin{proof}
  We assume, without loss of generality, that $a\leq b$. From \autoref{lem:eval:z2242prodmot} and \autoref{eval:doublez2242}, we have that
\[\zl(2a+1,2b+3) = \frac{(-1)^{a+b}}{4}\sum_{s=a}^{b}\zl(\{2\}^s,4,\{2\}^{a+b-s}).\]
The block decomposition of 
\[\zl(\{2\}^s,4,\{2\}^{a+b-s})=(-1)^{a+b+1}\il(0;\{1,0\}^s,1,0,0,0,\{1,0\}^{a+b-s};1)\] is $(\ell_1,\ell_2,\ell_3)=(2s+3,1,2a+2b-2s+2)$. Hence
\begin{align*}
\zl(2a+1,2b+3) &= -\frac{1}{4}\sum_{s=a}^b \ibl(2s+3,1,2a+2b-2s+2)\\
    &= -\frac{1}{4}\sum_{s=a}^b F(2s+1,0,2a+2b -2s +1) - F(2s+2,0,2a+2b-2s) \,.
\end{align*}
The dihedral symmetry of $\rbg_2$ implies that
\[F(p,q,r) = F(q,r,p) = -F(r,q,p)\]
and so this sum reduces to
\[\zl(2a+1,2b+3) = \frac{1}{4}F(2b+2,0,2a) \,,\]
from which the claim follows.
 \end{proof}

 \begin{lem}\label{lem:all_F_are_double}
For every tuple $(a,b,c)$ of even, non-negative integers,
\[F(a,b,c)=-4\zl_a(b+1,c+1)\]
where we denote by
\[\zl_a(b+1,c+1) \coloneqq \il(0;\{0\}^a,1,\{0\}^b,1,\{0\}^c;1)\]
the regularized iterated integral modulo products.
 \end{lem}
\begin{proof}
    By viewing $\phi_2$ as a linear isomorphism $\bg_2\to\rbg_2$, we see that we must have that the vector space
    \[\langle F(a,b,c)\mid a,b,c\geq 0,\, a+b+c=2n\rangle_\Q\]
    is dual to $\rbg_{2,2n+2}$, and furthermore that
    \[\langle \langle F(a,b,c)\mid a,b,c\geq 0\text{ and even},\, a+b+c=2n\rangle_\Q\]
    is dual to $P_e(\rbg_{2,2n+2})$.

    Following Brown's conventions \cite{BrownDepthGraded21}, $\gr_\mathcal{D}^1\gmot$ may be identified with the space of translation invariant polynomials $s_{2n+1}(x_0,x_1):=(x_0-x_1)^{2n}$. By the work of the second author \cite{KeilthyBlock21}, $\rbg_1$ is spanned by polynomials
    \[p_{2n+1}(x_0,x_1) := \frac{(2^{2n+1}-1)(x_0+x_1)^{2n} - (x_0-x_1)^{2n}}{2^{2n}}.\]
    Denoting by $f_e(x_0,x_1)$ the projection of a polynomial $f(x_0,x_1)$ onto $\Q[x_0^2,x_1^2]$, it is easy to see that
    \[s_{2n+1,e}(x_0,x_1) = 2p_{2n+1,e}(x_0,x_1)\, .\]
    The depth graded Ihara bracket
    \[\{\cdot,\cdot\}:\gr_\mathcal{D}^1\gmot\wedge \gr_\mathcal{D}^1\gmot\to \gr_\mathcal{D}^2\gmot\]
    is given by
    \begin{align*}\{f,g\}(x_0,x_1,x_2) =&{} f(x_0,x_1)\left(g(x_0,x_2)-g(x_1,x_2)\right)\\
        &+  f(x_1,x_2)\left(g(x_0,x_1)-g(x_0,x_2)\right)\\
        &+  f(x_2,x_0)\left(g(x_1,x_2)-g(x_0,x_1)\right)
    \end{align*}
    which is identical to the block graded Ihara bracket
    \[\{\cdot,\cdot\}:\rbg_1\wedge\rbg_1\to\rbg_2\,.\]
    Furthermore, as all the polynomials involved are of even total degree, this commutes with projection onto polynomials even in each variable (where we have formally extended the Ihara bracket to all polynomials of even total degree). Thus, we obtain a surjective map
    \begin{gather*}
        \gr_\mathcal{D}^2\gmot \to P_e(\rbg_2)\,,\\
        \{s_{2k+1},s_{2\ell+1}\} \mapsto 4\{p_{2k+1,e},p_{2\ell+1,e}\}\,.
    \end{gather*}
    Dualising this we obtain an injective map
    \begin{gather*}
         \langle F(a,b,c)\mid a,b,c\geq 0\text{ and even},\rangle_\Q\to \gr_2^\mathcal{D}\mathcal{L}\,,\\
        F(a,b,c)\mapsto 4\zl_a(b+1,c+1)\,.
    \end{gather*}
    Since this map is injective, and $\{\zl(b+1,c+1)\}_{b+c=2n}$ span $\gr_2^\mathcal{D}\mathcal{L}$ in weight $2n+2$, we must have that if $(a,b,c)$ are even non-negative integers such that
    \[\zl_a(b+1,c+1) = \sum_{2k+2\ell=a+b+c} \eta_{k,\ell}\zl(2k+1,2\ell+1)\]
    then, by \autoref{lem:F_to_double},
    \[F(a,b,c) = \sum_{2k+2\ell=a+b+c} \eta_{k,\ell}F(0,2k,2\ell)\]
    and hence
    \[F(a,b,c) = \sum_{2k+2\ell=a+b+c} -4\eta_{k,\ell}\zl(2k+1,2\ell+1) = - 4\zl_a(b+1,c+1)\,. \qedhere \]
\end{proof}

\propalldbl
\begin{proof}
 By corollary 4.2 of \cite{SchnepsPoisson06}, we have that the dimension of the space of motivic multiple zeta values of weight $2n+2$ modulo products is equal to the dimension of $\gr_2^\mathcal{D}\gmot_{2,2n+2}$, which is given by
\[\Big\lfloor\frac{n-1}{2}\Big\rfloor - \dim S_{2n+2}\,,\]
where $S_{2n+2}$ is the space of cusp forms of weight $2n+2$. It is known that
\[ \dim S_{2n+2} =\begin{cases} 
      \frac{n}{6}-1 & \text{if $n\equiv 0\pmod{6}$} \\
\frac{n}{6} & \text{if $n\equiv 1,2,3,4\pmod{6}$} \\
\frac{n}{6}+1 & \text{if $n\equiv 5\pmod{6}$\,.} \\
   \end{cases}
\]
Checking each case, we see 
\[\Big\lfloor\frac{n-1}{2}\Big\rfloor - \dim S_{2n+2}=\Big\lfloor\frac{n}{3}\Big\rfloor\,.\]

By \autoref{prop:blockdimensioncount},
\[\dim P_eV_{n}\leq\Big\lfloor\frac{n}{3}\Big\rfloor\,.\]
As $\gr^\mathcal{D}_2\mathcal{L}$ is spanned by motivic double zetas, \autoref{lem:all_F_are_double} implies that 
\[\langle F(a,b,c)\mid a,b,c\in 2\mathbb{Z}\rangle_\Q \cong \gr^\mathcal{D}_2\mathcal{L}\,.\]
Hence, the surjection of \autoref{lem:all_F_are_double} is an isomorphism
\[P_e\rbg_{2,2n+2}\cong \gr_2^\mathcal{D}\gmot_{2,2n+2}\]
for every $n>0$.
Following \autoref{thm:block-relations} we have that $P_e\rbg_{2,2n+2}\subset P_eV_n$. But
\[\Big\lfloor\frac{n}{3}\Big\rfloor\ = \dim \gr_2^\mathcal{D}\gmot_{2,2n+2}=\dim P_e\rbg_{2,2n+2}\leq \dim P_eV_n \leq \Big\lfloor\frac{n}{3}\Big\rfloor \]
and hence
\[P_e\rbg_{2,2n+2}= P_eV_n\,.\]
As such, all relations in
\[\langle F(a,b,c)\mid a,b,c\in 2\mathbb{Z}\rangle_\Q \cong \gr^\mathcal{D}_2\mathcal{L}\]
are determined by the relations defining $V_n$. In fact, as \eqref{rel2} has no even part, all weight graded relations among $F(a,b,c)$ with $a,b,c$ even are determined by \eqref{rel1} and \eqref{rel3}.
\end{proof}

 Notably, \autoref{prop:alldouble} tells us that the period polynomial relations among double zetas are a consequence of the dihedral symmetry of the block graded motivic Lie algebra. Using the surjection of \autoref{lem:all_F_are_double} we can, in fact, make this quite explicit.

 \begin{remark}
     Note that this also shows that we can upgrade the results of \autoref{prop:blockdimensioncount} to
     \begin{align*}
\dim \im P_e &=\Big\lfloor\frac{n}{3}\Big\rfloor\,,\\
\dim \ker P_e &=\Big\lfloor\frac{n-1}{2}\Big\rfloor = \dim \gmot_{2,2n+2} \,,
\end{align*}
with equalities in both cases.
 \end{remark}
\subsection{An explicit connection to period polynomials} 
 Recall that the space of even period polynomials $W_{2n}^+$ of degree $2n$ is defines as the subspace of $\Q[x_1,x_2]$ consisting of polynomials that are homogeneous of degree $2n$, even in each variable, and satisfy
 \begin{align*}
     P(x_1,0)&=P(0,x_2)=0\,,\\
     P(x_1,x_2)&+P(x_2,x_1)=0\,,\\
     P(x_1,x_2)+P(x_1,x_1+x_2)&+P(x_1+x_2,x_2)=0\,.
 \end{align*}
 See, for example, Section 8 of \cite{BrownZeta317} for more detail.

 \begin{prop}\label{prop:kernel-period-isomorphism}
 Denote by $e_{2k+1}$ the projection of the image of the Lie algebra generator $\sigma_{2k+1}$ in $\rbg_1$ to $\Q[x_1^2,x_2^2]$. The kernel
 \[K\coloneqq \ker\bigg(\{\cdot,\cdot\}:\bigoplus_{k\geq 1}\Q e_{2k+1}\wedge\bigoplus_{\ell\geq 1}\Q e_{2\ell+1}\to \bigoplus_{n\geq 1}P_eV_n\bigg)\]
 is isomorphic to the space of even period polynomials
 \[K\cong \bigoplus_{n\geq 1}W_{2n}^+\,.\]
 \end{prop}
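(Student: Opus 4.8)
The plan is to make both sides explicit as spaces of two‑variable polynomials and to match their defining relations. In weight $2n+2$ the source $\bigoplus_{k\ge 1}\Q e_{2k+1}\wedge\bigoplus_{\ell\ge 1}\Q e_{2\ell+1}$ has basis $\{e_{2a+1}\wedge e_{2b+1}\}$ indexed by $1\le a<b$ with $a+b=n$, hence dimension $\lfloor\tfrac{n-1}{2}\rfloor=\dim\gmot_{2,2n+2}$. I would send $e_{2a+1}\wedge e_{2b+1}$ to the antisymmetric monomial $x^{2a}y^{2b}-x^{2b}y^{2a}$; this identifies the source with the space $W'_{2n}$ of homogeneous degree‑$2n$ polynomials $P(x,y)$ that are even in each variable, antisymmetric, and vanish on the axes $x=0$, $y=0$ (the last automatically, since $a,b\ge 1$). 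Three of the four conditions defining $W_{2n}^+$ are thereby built in, so the claim reduces to showing that $\sum_{a+b=n}c_{a,b}\{e_{2a+1},e_{2b+1}\}$ vanishes in $P_eV_n$ precisely when $P\coloneqq\sum_{a+b=n}c_{a,b}(x^{2a}y^{2b}-x^{2b}y^{2a})$ also satisfies the three‑term relation $P(x_1,x_2)+P(x_1,x_1+x_2)+P(x_1+x_2,x_2)=0$.

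Next I would compute the bracket, feeding the closed form of the generators $e_{2k+1}\in\Q[x_1^2,x_2^2]$ (computed from the relations cutting out $\rbg_1$) and the block Ihara bracket $\bg_1\otimes\bg_1\to\bg_2\subseteq V_n$ of \cite{KeilthyBlock21} through the projection $P_e$. Two reductions keep this manageable. First, the Ihara bracket of two polynomials even in each variable is again even in each variable, so $\{e_{2a+1},e_{2b+1}\}$ already lies in $P_eV_n$ and there agrees with $P_e\{\overline{\sigma}_{2a+1},\overline{\sigma}_{2b+1}\}$: writing $\overline{\sigma}_{2k+1}=e_{2k+1}+(\overline{\sigma}_{2k+1}-e_{2k+1})$ and expanding, the cross terms carry odd exponents and are killed by $P_e$. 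Second, by \autoref{lem:bernoullidecomp} and \autoref{rem:evendoublezetas} an element of $P_eV_n$ is determined by its coefficients of $x_1^{2i}x_3^{2j}$ — equivalently by the $\eta_{2i,2j}$, the remaining $\eta$'s being fixed by the Bernoulli recursion — so it is enough to record the $x_2=0$ specialisation of $\sum c_{a,b}\{e_{2a+1},e_{2b+1}\}$, a polynomial in $x_1,x_3$.

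The main obstacle is the ensuing combinatorial comparison. Expanding the bracket produces, for each pair $(a,b)$, a bilinear expression in $x_1,x_3$ whose coefficients are binomial sums of exactly the shape appearing in \autoref{rem:evendoublezetas} and in the Gangl–Kaneko–Zagier relations \cite{GKZ06}; the shift $x\mapsto x+y$ underlying the three‑term relation enters through the pre‑Lie substitution of block variables built into the Ihara bracket. One must show that, after this substitution and the parity bookkeeping above, the $x_2=0$ part of $\sum c_{a,b}\{e_{2a+1},e_{2b+1}\}$ is, up to a nonzero constant, the image under the pairing of \autoref{lem:zetaetacorr} of the three‑term defect of $P$; granting this, the bracket vanishes if and only if $P\in W_{2n}^+$, and the map $e_{2a+1}\wedge e_{2b+1}\mapsto x^{2a}y^{2b}-x^{2b}y^{2a}$ restricts to an isomorphism $K\xrightarrow{\ \sim\ }W_{2n}^+$. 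This is the block‑graded incarnation of the classical identification \cite{SchnepsPoisson06} of the kernel of the depth‑graded bracket with even period polynomials; carrying out the binomial identity, and checking that the parity reduction introduces no spurious kernel, is the real content.

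As a consistency check (and a shortcut to one inclusion), the ``if'' direction already yields $W_{2n}^+\subseteq K$, while $B$ is surjective onto $P_eV_n$: its image is $P_e$ of the span of the quadratic brackets $\{\overline{\sigma}_{2a+1},\overline{\sigma}_{2b+1}\}$, which by \autoref{prop:alldouble} account for all relations among the weight‑$2n+2$ double zeta values detected by $P_eV_n$. Hence, using the dimension computation in the proof of \autoref{prop:alldouble} together with \autoref{prop:blockdimensioncount}, $\dim K=\lfloor\tfrac{n-1}{2}\rfloor-\dim\im B=\lfloor\tfrac{n-1}{2}\rfloor-\lfloor\tfrac{n}{3}\rfloor=\dim S_{2n+2}=\dim W_{2n}^+$, so the inclusion $W_{2n}^+\subseteq K$ is forced to be an equality, recovering the isomorphism independently of the explicit matching once surjectivity and the ``if'' direction are established.
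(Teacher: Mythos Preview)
Your plan is correct and follows essentially the same route as the paper: identify the kernel via the three-term relation defining even period polynomials, then close by a dimension count. The paper's execution is more direct at the key step. Instead of the $x_2=0$ specialisation and the double-zeta pairing, it uses the injective projection $\pi^+\colon P_eV_n\to\Q[X^2,Y^2]$ with $X=x_1-x_2$, $Y=x_2-x_3$ coming out of \autoref{lem:bernoullidecomp}, and records (calling it a ``straightforward computation'') the explicit formula
\[
\pi^+\big(\{e_{2k+1},e_{2\ell+1}\}\big)=Q_{2k,2\ell}(X,Y)+Q_{2k,2\ell}(X,X+Y)+Q_{2k,2\ell}(X+Y,Y),
\]
where $Q_{2k,2\ell}(X,Y)\coloneqq X^{2k}Y^{2\ell}-X^{2\ell}Y^{2k}$. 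This single identity dispatches what you call the ``main obstacle'' in one line and gives both directions of your equivalence simultaneously, so the map $K\to W_{2n}^+$ sending $\sum a_{k,\ell}\,e_{2k+1}\wedge e_{2\ell+1}$ to $\sum a_{k,\ell}\,Q_{2k,2\ell}$ is well-defined and injective. The surjectivity argument is then exactly your consistency check (the paper phrases it contrapositively: if $\dim K<\dim W_{2n}^+$ then the span of the brackets would exceed $\dim P_eV_n$). Your parity reductions relating $\{e_{2a+1},e_{2b+1}\}$ to $P_e\{\overline\sigma_{2a+1},\overline\sigma_{2b+1}\}$ are more elaborate than the paper needs, since it simply works with the bracket map into $P_eV_n$ as given in the statement.
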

 \begin{proof}
Define a pair of linear maps
\begin{gather*}
    \pi_1:X^2\Q[X^2] \to \Q[x_1,x_2]\,,\\
    p(X)\mapsto P_e(p(x_1-x_2))
\end{gather*}
and
\begin{gather*}
    \pi_2:\Q[X^2,Y^2]_{>0}\to\Q[x_1,x_2,x_3]\,,\\
    p(X,Y)\mapsto P_e(p(x_1-x_2,x_2-x_3)),
\end{gather*}
where we write $\Q[X^2,Y^2]_{>0}$ for the subspace of polynomials of positive degree. We define a basis for the space of antisymmetric polynomials in $\Q[X^2,Y^2]_{>0}$ given by
\[\left\{ Q_{2k,2\ell}(X,Y) \coloneqq X^{2k}Y^{2\ell} - X^{2\ell}Y^{2k}\right\}\,.\]
The first map $\pi_1$ defines an isomorphism
\[X^2\Q[X^2]\to \bigoplus_{k\geq 1}\Q e_{2k+1}\]
and it is not difficult to show that we have a commutative diagram
%{\begin{center}
\[
\begin{tikzcd}
    X^2\Q[X^2]\wedge Y^2\Q[Y^2]\arrow[d]\arrow[r] & \bigoplus_{k\geq 1}\Q e_{2k+1}\wedge\bigoplus_{\ell\geq 1}\Q e_{2\ell+1} \arrow[d] \\
  \Q[X^2,Y^2]_{>0}\arrow[r] & \Q[x_1,x_2,x_3] \,,
\end{tikzcd}\]
%\end{center}
where the left vertical arrow is the map
\[X^{2k}\wedge Y^{2\ell} = Q_{2k,2\ell}(X,Y)+Q_{2k,2\ell}(X,X+Y)+Q_{2k,2\ell}(X+Y,Y)\]
and the right vertical arrow is the map induced by the Ihara bracket. By construction, the image of the right vertical arrow is contained in $\bigoplus_{n\geq 1}P_eV_n$, and has kernel $K$. Note also that
\[\bigoplus_{n\geq 1}W_{2n}^+ \cong  \ker\left(X^2\Q[X^2]\wedge Y^2\Q[Y^2]\to \Q[X^2,Y^2]_{>0}\right)\]
via the identification
\[X^{2k}\wedge Y^{2\ell} \mapsto Q_{2k,2\ell}(X,Y)\,,\]
again by construction. Thus we have a commutative diagram of short exact sequences
\[\begin{tikzcd}
    0\arrow{r} & W_{2n}^+ \arrow{r}\arrow{d}{F} &\bigoplus_{k+l=n} \Q X^{2k}\wedge Y^{2\ell}\arrow{d}{\cong} \arrow{r} & Q_{2n} \arrow{r}\arrow{d}{G}& 0\\
    0\arrow{r}& K_n \arrow{r} &\bigoplus_{k+l=n}\Q e_{2k+1}\wedge \Q e_{2\ell+1} \arrow{r} &\rbg_{2,2n+2} \arrow{r} & 0 \,,
\end{tikzcd}\]
where we denote by 
\[K_n:=\ker\left(\bigoplus_{k+l=n}\Q e_{2k+1}\wedge \Q e_{2\ell+1}\to P_eV_n\right).\]
A short diagram chase shows that $F$ is an injection and is $G$ a surjection. Thus, we must have
\[\dim K_n \geq \dim W_{2n}^+ =\dim S_{2n+2}\,.\]
If $\dim K_n> \dim W_{2n}^+$, then we must have
\[\langle \{e_{2k+1},e_{2\ell+1}\}\mid k+\ell=n\rangle_\Q < \Big\lfloor\frac{n-1}{2}\Big\rfloor - \dim S_{2n+2} = \dim P_e V_n.\]
But 
\[\langle \{e_{2k+1},e_{2\ell+1}\}\mid k+\ell=n\rangle_\Q = P_e\rbg_{2,2n+2}\]
by definition, and by the previous theorem $P_e\rbg_{2,2n+2}=P_eV_n$. Hence $\dim K_n = \dim W_{2n}^+$ for every $n>0$.
 \end{proof}
 
 This suggests that a possible approach to study depth graded motivic multiple zeta values and exploring \autoref{conj:depthzetadim} is to consider the Lie algebra generated by the $\{e_{2k+1}\}_{k\geq 1}$, or equivalently, the projection of $\rbg$ onto $\bigoplus_{n\geq 2}\Q[x_1^2,x_2^2,\ldots,x_n^2]$. Indeed, the results of this section show that this is isomorphic to the depth graded motivic Lie algebra in depths 1 and 2, though this isomorphism cannot extend to depth 4 as the projection of $\rbg_4$ onto $\Q[x_1^2,x_2^2,x_3^2,x_4^2,x_5^2]$ is generated by the $\{e_{2k+1}\}_{k\geq 1}$ and hence we cannot find the ``exceptional'' generators in depth 4 referred to in \autoref{rem:depthgradedfail} required to generate the full depth graded Lie algebra \cite[Section 1.4]{BrownDepthGraded21}

\section{Proofs of the more technical results}\label{sec:proofs}
We now explain the proofs of some of the more technical results used in the previous sections. It is worth noting that determining the statement of \autoref{lem:eval:z2242prodmot} required computing the full evaluation of $\zm(\{2\}^a,4,\{2\}^b)$ described in \autoref{sec:num2242} and \autoref{sec:mot2242}. However, as we only use the evaluation of $\zeta^\lmot(\{2\}^a,4,\{2\}^b)$ in terms of double zetas modulo products, we have elected to give here a simpler direct proof using the motivic formalism.
 \subsection{\texorpdfstring{Evaluation of $\zl(\{2\}^a,4,\{2\}^b)$}{Evaluation of zeta\textasciicircum{}l(\{2\}\textasciicircum{}a, 4, \{2\}\textasciicircum{}b)}}
\begin{lem}\label{lem:eval:z2242prodmot}
	The following evaluation holds in the motivic coalgebra
	\begin{align*}
	  \zeta^\lmot(\{2\}^a, 4, \{2\}^b) {} = {}
	& (-1)^{a+b} \begin{aligned}[t] \bigg\{  & {-} 4 \zeta^\lmot(2a+2, 2b+2) +  4 \zeta^\lmot(2b+1, 2a+3)
	\\ & 
	+ \sum_{\crampedsubstack{i + j = 2a + 2b \\ i, j \geq 0 }} \bigg(  \frac{1}{2^i} \binom{i+1}{2a+1} + \frac{1}{2^j} \binom{j+1}{2b+1} \! \bigg) \zeta^\lmot(i+2, j+2) \bigg\} 
	 \,. \end{aligned}
	\end{align*}
\end{lem}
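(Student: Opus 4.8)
\emph{Strategy.} The plan is to reduce, via the rigidity of \autoref{prop:coaction:modprod}, to a computation of infinitesimal coactions. Write $N\coloneqq 2(a+b+2)$ for the common weight. Since $N$ is even we have $\zl(N)=0$, so by \autoref{prop:coaction:modprod} it suffices to show that the difference $R\in\mathcal{L}_N$ of the two sides lies in $\ker\partial_{<N}$. As $\partial_{2r+1}=\D_{2r+1}-\tau\D_{N-2r-1}$, and as the remaining piece $\partial_{N-1}=\D_{N-1}-\tau\D_1$ vanishes on $\mathcal{L}_N$ (because $\mathcal{L}_1=0$ forces $\D_1=0$, while $\D_{N-1}\colon\mathcal{L}_N\to\mathcal{L}_{N-1}\otimes\mathcal{L}_1=0$), the task reduces to checking, for every $r$ with $3\le 2r+1\le N-3$, that
\[
(\id\otimes\pi_{\mathcal{L}})\,\D_{2r+1}R=0 \qquad\text{in } \mathcal{L}_{2r+1}\otimes\mathcal{L}_{N-2r-1},
\]
where $\pi_{\mathcal{L}}$ is the projection onto indecomposables and $\D_{2r+1}$ is computed on lifts to $\mathcal{H}$ via \eqref{eqn:derivation}. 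So one computes $\D_{2r+1}$ of both sides and compares.

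\emph{The left-hand side.} From $\zl(\{2\}^a,4,\{2\}^b)=(-1)^{a+b+1}\imot(0;\{1,0\}^a,1,0,0,0,\{1,0\}^b;1)$ (see \eqref{eqn:zasint}) and \eqref{eqn:derivation}: the word $0\,\{1,0\}^a\,1,0,0,0\,\{1,0\}^b\,1$ has block degree $2$ (block decomposition $(2a+3,1,2b+2)$), so every consecutive sub-word has block degree $\le 2$, and so does each word obtained by deleting one. Both tensor factors produced by $\D_{2r+1}$ have odd weight, so by \autoref{lem:blockparity} a factor of block degree $0$ or $2$ vanishes; hence only the sub-words of block degree exactly $1$ survive, namely those whose enclosing window meets exactly one of the two $00$-defects of the central $1,0,0,0$, and for such a window the complementary word also has block degree $1$. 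Since for odd $m\ge 3$ the block-degree-$1$ part of $\mathcal{L}_m$ is the line $\Q\zl(m)$ (as $\bg_1$ is one-dimensional in each odd weight $\ge 3$ and $\mathcal{B}_0\mathcal{L}=0$), and since after path-reversal and functoriality the surviving small factors become (shuffle-regularised) values $\zl(\{2\}^r,1)$ while the large factors become $\zl(\{2\}^{a-r},3,\{2\}^{b})$ or $\zl(\{2\}^{a},3,\{2\}^{b-r})$, the known reductions of these modulo products \cite{BrownMTM12} present $(\id\otimes\pi_{\mathcal{L}})\D_{2r+1}\zl(\{2\}^a,4,\{2\}^b)$ as an explicit rational multiple of $\zl(2r+1)\otimes\zl(N-2r-1)$.

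\emph{The right-hand side.} For a double zeta $\zl(m,n)$ of weight $N$ one runs the same block-degree analysis on the word $1,0^{m-1},1,0^{n-1}$; combined with the elementary reduction $\imot(0;\{0\}^p,1,\{0\}^q;1)=(-1)^{p+1}\binom{p+q}{p}\zm(p+q+1)$, it expresses $(\id\otimes\pi_{\mathcal{L}})\D_{2r+1}\zl(m,n)$ as a sum of terms $\zl(2r+1)\otimes(\text{weight }N-2r-1)$ with binomial coefficients. Summing the contributions of the three groups of double zetas in the statement --- $-4\zl(2a+2,2b+2)$; $\;4\zl(2b+1,2a+3)$; and $\;\sum_{i+j=2a+2b}\bigl(2^{-i}\binom{i+1}{2a+1}+2^{-j}\binom{j+1}{2b+1}\bigr)\zl(i+2,j+2)$ --- and simplifying yields $(\id\otimes\pi_{\mathcal{L}})\D_{2r+1}$ of the right-hand side; in particular, every component of the large factors not proportional to $\zl(N-2r-1)$ has to cancel, which is itself part of the identity.

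\emph{Comparison.} Matching the two computations for each $r$ turns the lemma into a single family of binomial identities in $a,b,r$. I expect this to be the main obstacle: the identity has exactly the shape of the Bernoulli/binomial recursion for even-weight double zetas recalled in \autoref{rem:evendoublezetas} and \autoref{lem:bernoullidecomp} (going back to \cite{GKZ06}), and it is precisely this coincidence that makes the whole evaluation work. Besides the algebra of binomials, the delicate points are the case distinctions $r\le a$ versus $r\le b$ and the windows, on both sides, that abut the outer endpoints of the integrals. Once the identity is in hand, $R\in\ker\partial_{<N}\cap\mathcal{L}_N=\Q\zl(N)=\{0\}$, so the two expressions agree in $\mathcal{L}$, proving the lemma.
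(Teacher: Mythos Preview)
Your overall strategy is exactly the paper's: reduce via \autoref{prop:coaction:modprod} to checking that $\partial_{<N}$ kills the difference, compute $(\id\otimes\pi)\D_{2r+1}$ on both sides, and compare. Your block-degree analysis of the left-hand side is correct and recovers the paper's formula
\[
(\id\otimes\pi)\D_{2r+1}\zm(\{2\}^a,4,\{2\}^b)=-\delta_{r\le a}\,\zeta_1^\lmot(\{2\}^r)\otimes\zl(\{2\}^{a-r},3,\{2\}^b)+\delta_{r\le b}\,\zeta_1^\lmot(\{2\}^r)\otimes\zl(\{2\}^a,3,\{2\}^{b-r}),
\]
and your treatment of the double zetas on the right-hand side is likewise on target.

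The gap is at the step you flag as ``the main obstacle''. You do not prove the resulting binomial identity, and your diagnosis of its nature is off: it is \emph{not} the Bernoulli recursion of \cite{GKZ06} recalled in \autoref{rem:evendoublezetas}. After inserting Brown's evaluations of $\zl_1(\{2\}^r)$ and $\zl(\{2\}^\alpha,3,\{2\}^\beta)$ and the formula for $\D_{2r+1}\zm(p,q)$, the comparison reduces to identities of the form
\[
\sum_{i=0}^{2k-2}(-2)^{-i}\binom{i+2\ell-1}{2\ell-1}\binom{2r}{i+2\ell-1}=2^{-(2r+1-2\ell)}\binom{2r}{2\ell-1}
\]
together with a non-obvious $k\leftrightarrow\ell$ symmetry of
\[
\sum_{i=0}^{2\ell-2}(-2)^{-i-2k}\binom{i+2k-1}{2k-1}\binom{2r}{2\ell-i-1}.
\]
These involve no Bernoulli numbers; the paper proves them as a separate lemma (\autoref{lem:binomial}), the symmetry requiring a genuine residue/contour argument (interpreting the sum as $[x^{-1}]\,\frac{(1+x)^{2r}-1}{(2+x)^{2k}x^{2\ell}}$ and using that the sum of residues at $0$ and $-2$ vanishes). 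Without this piece your argument is incomplete, and the GKZ-type recursion will not supply it.
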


\begin{proof}
    For \(Z \) a weight \( w \) combination of motivic MZVs, it is sufficient to check, by \autoref{prop:coaction:modprod}, that 
    \[
        \partial_{<w}Z=0
    \]
    for all \(1<2r+1<w \), as this would show that \(Z=\alpha \zm(w) +\text{products} \). If the weight of \( Z \) is even, we have that \(\zeta^\lmot(w)=0 \); this means checking that \( \partial_{<w}Z=0 \) allows us to confirm that \(\pi(Z)=0 \) on the nose, where \(\pi\colon\mathcal{A}\to\mathcal{L}\) is the natural projection.
    
    Explicitly, it amounts to checking for \( 1<2r+1<w \) that
    \[
		(\id \otimes \pi)(\D_{r} Z) - \tau (\id \otimes \pi)(\D_{w-{r}} Z) = 0 \,,
	\]
	where \( \tau(a\otimes b)= b\otimes a\). For the case under consideration we need to check for \( 3 \leq 2r+1 \leq 2a + 2b + 1 \)
	\[
		(\id \otimes \pi)(\D_{2r+1} Z) - \tau (\id \otimes \pi) (\D_{2a + 2b + 3 - 2r} Z) \overset{?}{=} 0 \,.
	\]
	
    It is a straightforward calculation, as explained in \autoref{sec:zs2242dp3:mot}, to show the following.
	\[
		(\id \otimes \pi) \D_{2r+1} \zm(\{2\}^a, 4, \{2\}^b) = \begin{aligned}[t]
			-\delta_{r \leq a} & \zeta_1^\lmot(\{2\}^r) \otimes \zl(\{2\}^{a-r}, 3, \{2\}^b)  \\
			& + \delta_{r \leq b} \zeta_1^\lmot(\{2\}^r) \otimes \zl(\{2\}^a, 3, \{2\}^{b-r}) \,.
		\end{aligned}
	\]
	Recalling the motivic evaluations of \( \zm_1(\{2\}^r) \) and \( \zm(\{2\}^a, 3, \{2\}^b) \) from \cite{BrownMTM12}, we have that
	\begin{align*}
	    \zl_1(\{2\}^r) & {} = 2(-1)^r \zl(2r+1)\,, \\
		\zl(\{2\}^a, 3, \{2\}^b) & {}= \begin{aligned}[t] 2 (-1)^{a + b + 1} \bigg( &  \! \binom{2a  +2b + 2}{2a + 2}   \\ 
		{} & \quad -  (1 - 2^{-2a - 2b - 2}) \binom{2a + 2b + 2}{2b+1} \! \bigg) \zl(2a + 2b + 3) \,. \end{aligned}
	\end{align*}
	Therefore
	\begin{align*}
			& (\id \otimes \pi) \D_{2r+1} \zm(\{2\}^a, 4, \{2\}^b) = \\
			& \begin{aligned}[t]
			 \Bigg\{  4\delta_{r \leq a} (-1)^{a+b} \bigg( \! \binom{2a + 2b + 2 - 2r}{2a - 2r + 2} - {} & (1 - 2^{2r-2a-2b-2}) \binom{2a + 2b + 2 - 2r}{2b + 1} \! \bigg)  \\
			 - 4\delta_{r \leq b} (-1)^{a+b} \bigg( \! \binom{2a + 2b + 2 - 2r}{2a + 2} & {} - (1 - 2^{2r-2a-2b-2}) \binom{2a + 2b + 2 - 2r}{2b -2r + 1} \! \bigg) \Bigg\} \\
			 & \hspace{2em} {} \cdot \zl(2r+1) \otimes \zl(2a + 2b + 3 - 2r) \\
			\end{aligned}
	\end{align*}
	Likewise from \autoref{sec:mot_doubling}
	\[
		(\id\otimes\pi) \D_{2r+1} \zm(p,q) = \begin{aligned}[t] \bigg( \delta_{2r+1 = p} + (-1)^p \binom{2r}{p-1} - (-1)^q \binom{2r}{q-1} \! \bigg) & \\ {} \cdot \zl(2r+1) \otimes \zl(p+q - 2r - 1) & \,. \end{aligned}
	\]
	So for the purpose of checking
	\[
		(\id \otimes \pi)(\D_{2r+1} Z) - \tau (\id \otimes \pi) (\D_{2a + 2b + 3 - 2r} Z) = 0
	\]
	where \( Z \) is the purported evaluation of \( \zl(\{2\}^a, 4, \{2\}^b) \) via double zeta values \( \zl(n_1, n_2) \), we can project \( \zl(2r+1) \otimes \zl(2a + 2b + 3 - 2r) \mapsto 1 \), and reduce to an identity amongst binomial coefficients.
	
	After some straightforward simplification of the deltas and binomial coefficients in the expression for
	\[
		\big( (\id \otimes \pi) \D_{2r+1} - \tau (\id \otimes \pi) \D_{2a + 2b + 3 - 2r} \big) ( \text{LHS \autoref{lem:eval:z2242prodmot}} - \text{RHS \autoref{lem:eval:z2242prodmot}}) \,,
	\]
	for the range \( 3 \leq 2r+1 \leq 2a + 2b + 1 \), and using that \( i, j \) have the same parity in the sum, we find the claimed identity to be equivalent to the following purported identity
	\begin{align*}
		& \sum_{\substack{i + j = 2a + 2b \\ i,j \geq 0}} \begin{aligned}[t] & (-1)^i \bigg( 2^{-i} \binom{i+1}{2a+1} + 2^{-j} \binom{j+1}{2b+1} \! \bigg) \\
		& {} \cdot \bigg( \! \binom{2r}{i+1} - \binom{2r}{j+1} - \binom{2a + 2b + 2 - 2r}{i+1} + \binom{2a + 2b + 2 - 2r}{j+1} \! \bigg) \end{aligned} \\
		& \overset{?}{=} \begin{aligned}[t] & 2^{-(2a + 2b + 1 - 2r)} \binom{2a + 2b + 2 - 2r}{2b + 1} - 2^{-(2a + 2b + 1 - 2r)} \binom{2a + 2b + 2 - 2r}{2a + 1} \\
		& - 2^{-(2r-1)} \binom{2r}{2b + 1} + 2^{-(2r - 1)} \binom{2r}{2a + 1} \,.	
		\end{aligned}	
	\end{align*}
	This is seen to be an exact identity using the results from  \autoref{lem:binomial} below.
\end{proof}

\begin{lem}\label{lem:binomial}
	For \( 3 \leq 2r + 1 \leq 2k+2\ell-3 \), the following identities hold
	\begin{align*}
	\tag{i}
		& \sum_{i=0}^{2k-2} (-2)^{-i} \binom{i+2\ell-1}{2\ell-1} \binom{2r}{i + 2\ell-1} = 2^{-(2r + 1 - 2\ell)} \binom{2r}{2\ell-1} \\
	\tag{ii}
		& \sum_{i=0}^{2\ell-2} (-2)^{-i-2k} \binom{i+2k-1}{2k-1} \binom{2r}{2\ell-i-1} = 
		\sum_{i=0}^{2k-2} (-2)^{-i-2\ell}  \binom{i+2\ell-1}{2\ell-1} \binom{2r}{2k-i-1}  \,,
	\end{align*}
	i.e. the left hand side is symmetric in \( k \leftrightarrow \ell \).
	
	\begin{proof}
		Given the restriction \( 2r + 1 \leq 2k + 2\ell - 3 \), the sum in i) can be truncated to \( i = 2r + 1 - 2\ell \).  It is then reduced to the binomial theorem after simplifying the summand. \medskip
		
		For ii), we show that the left hand side is symmetric in \( k \leftrightarrow \ell \), to obtain the equality.  (We remark here that the symmetry is not obvious, as even the number of non-zero terms differs between the two sides.) To show the symmetry, note that 
		\begin{align*}
			& \sum_{i=0}^{2\ell-2} (-2)^{-i-2k} \binom{i+2k-1}{2k-1} \binom{2r}{2\ell-i-1} {} = {}  \\ 
			& \bigg. -(-2)^{-i-2k} \binom{i+2k-1}{2k-1} \underbrace{\binom{2r}{2\ell-i-1}}_{=1} \bigg\rvert_{i = 2\ell-1} + \sum_{i=0}^{\infty} (-2)^{-i-2k} \binom{i+2k-1}{2k-1} \binom{2r}{2\ell-i-1} \,,
		\end{align*}
		since the second binomial vanishes for \( i > 2\ell-1\).  The first term is equal to the coefficient of \( x^{2\ell-1} \) in
		\[
			-(2 + x)^{-2k}
		\]
		Likewise, the second term is the coefficient of \( x^{2\ell-1} \) in 
		\[
			(1 + x)^{2r} (2+x)^{-2k} \,.
		\]
		Therefore the left hand side of ii) is given by
		\[
			[x^{2\ell-1}] \bigg( \frac{(1+x)^{2r} - 1}{(2+x)^{2k}} \bigg) = [x^{-1}] \bigg( \frac{(1+x)^{2r} - 1}{(2+x)^{2k} x^{2\ell}} \bigg) \,.
		\]
		This is not obviously symmetric in \( k \leftrightarrow \ell \); it is however equal to the following contour integral around 0 (along a sufficiently small circle \( C_{\varepsilon}(0) \)) by the residue theorem
		\[
			= \frac{1}{2 \pi \ii} \oint_{C_{\varepsilon}(0)} \frac{(1+z)^{2r} - 1}{(2+z)^{2k} z^{2\ell}} \mathrm{d}z \,.
		\]
		The only poles of the integrand \[
			f(z) = \frac{(1+z)^{2r} - 1}{(2+z)^{2k} z^{2\ell}}
		\] are at \( z = -2 \), and at \( z = 0 \).  Since \( 2r + 1 \leq 2k + 2\ell - 3 \), we see that \[
			-\frac{1}{z^2}f\Big(\frac{1}{z}\Big) = \frac{z^{2r} - (1 + z)^{2r}}{(1 + 2z)^{2k}} \cdot z^{2k + 2l - 2 - 2r}
		\]
		has no pole at \( z = 0 \), so that our original integrand \( f(z) \) has no pole (and hence no residue) at \( z = \infty \).  We therefore find that the residues at \( z = 0 \) and at \( z = -2 \) must cancel, giving
		\begin{equation}\label{eqn:sumofres}
			 \frac{1}{2 \pi \ii} \oint_{C_{\varepsilon}(0)} \frac{(1+z)^{2r} - 1}{(2+z)^{2k} z^{2\ell}} \mathrm{d}z + \frac{1}{2 \pi \ii}  \oint_{C_{\varepsilon}(-2)} \frac{(1+z)^{2r} - 1}{(2+z)^{2k} z^{2\ell}} \mathrm{d}z = 0 \,.
		\end{equation}
		Now put \( z \mapsto -2-w \), with \( \mathrm{d}z = -\mathrm{d}w \), in the second integral, and we find
		\[
		\frac{1}{2 \pi \ii}  \oint_{C_{\varepsilon}(-2)} \frac{(1+z)^{2r} - 1}{(2+z)^{2k} z^{2\ell}} \mathrm{d}z = \frac{1}{2 \pi \ii}  \oint_{C_{\varepsilon}(0)} \frac{(-1-w)^{2r} - 1}{(-w)^{2k} (-2-w)^{2\ell}} \mathrm{d}w \,.
		\]
		Putting this back into \autoref{eqn:sumofres} shows that
		\[
		 \frac{1}{2 \pi \ii} \oint_{C_{\varepsilon}(0)} \frac{(1+z)^{2r} - 1}{(2+z)^{2k} z^{2\ell}} \mathrm{d}z - \frac{1}{2 \pi \ii}  \oint_{C_{\varepsilon}(0)} \frac{(1+w)^{2r} - 1}{(w)^{2k} (2+w)^{2\ell}} \mathrm{d}w = 0 \,,
		\]
		and so establishes the symmetry in \( k \leftrightarrow \ell \) that we claimed.
	\end{proof}
\end{lem}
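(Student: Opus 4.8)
The plan is to handle the two identities by quite different means: part (i) by an elementary binomial manipulation that exploits the degree hypothesis to truncate the sum, and part (ii) by recognising both sides as coefficients of one explicit rational function and comparing its residues at $0$, $-2$ and $\infty$.

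For part (i), I would first note that the hypothesis $2r+1\le 2k+2\ell-3$ forces $\binom{2r}{i+2\ell-1}=0$ as soon as $i>2r+1-2\ell$, and $2r+1-2\ell\le 2k-3<2k-2$, so the sum is really a finite sum that is unaffected by its stated upper limit. Then I would apply the subset-of-a-subset identity $\binom{2r}{i+2\ell-1}\binom{i+2\ell-1}{2\ell-1}=\binom{2r}{2\ell-1}\binom{2r-2\ell+1}{i}$, pull $\binom{2r}{2\ell-1}$ out of the sum, and recognise the remaining $\sum_{i}\binom{2r-2\ell+1}{i}(-\tfrac12)^i=(1-\tfrac12)^{2r-2\ell+1}=2^{-(2r+1-2\ell)}$ by the binomial theorem. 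This is routine; the only point of substance is the truncation.

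For part (ii), the first step is to convert each side into a coefficient extraction. Expanding $(2+x)^{-2m}=\sum_{i\ge 0}(-2)^{-i-2m}\binom{i+2m-1}{2m-1}x^i$ and $(1+x)^{2r}=\sum_j\binom{2r}{j}x^j$, a Cauchy product identifies the stated left-hand sum — after adding back the single $j=0$ term, which absorbs the ``$-1$'' in the numerator — with $[x^{2\ell-1}]\tfrac{(1+x)^{2r}-1}{(2+x)^{2k}}$, and symmetrically the right-hand sum with $[x^{2k-1}]\tfrac{(1+x)^{2r}-1}{(2+x)^{2\ell}}$. So it suffices to show that the residue at $0$ of $f(z)=\dfrac{(1+z)^{2r}-1}{(2+z)^{2k}z^{2\ell}}$ is invariant under $k\leftrightarrow\ell$. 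To do this I would observe that $f$ is rational with poles only at $z=0$ and $z=-2$, and that the inequality $2r+1\le 2k+2\ell-3$ makes $-z^{-2}f(1/z)$ regular at $z=0$; hence $f$ has no residue at $\infty$ and $\mathrm{Res}_0 f+\mathrm{Res}_{-2}f=0$. Finally the substitution $z\mapsto-2-w$ carries $\mathrm{Res}_{-2}f$ to (up to sign) the residue at $0$ of the function obtained from $f$ by interchanging the exponents $2k$ and $2\ell$ — this works precisely because $(1+z)^{2r}$, $(2+z)^{2k}$ and $z^{2\ell}$ involve only even powers, so $(1+(-2-w))^{2r}-1=(1+w)^{2r}-1$, $(2+(-2-w))^{2k}=w^{2k}$ and $(-2-w)^{2\ell}=(2+w)^{2\ell}$. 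Combining with $\mathrm{Res}_0 f=-\mathrm{Res}_{-2}f$ gives the claimed symmetry.

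I expect the conceptual heart of the whole lemma to be the vanishing of the residue at $\infty$ in part (ii): this is the one place where the hypothesis $2r+1\le 2k+2\ell-3$ is genuinely used, and it is what makes the otherwise non-obvious symmetry (the two sides even have different numbers of nonzero terms) fall out of the $z\mapsto-2-w$ involution. The only real hazard in writing it up carefully is the bookkeeping of signs and contour orientations in that change of variables; everything else is binomial-theorem and generating-function routine.
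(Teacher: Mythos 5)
Your proposal is correct and follows essentially the same route as the paper: for (i) the truncation plus the subset-of-a-subset identity and binomial theorem, and for (ii) the identification of each side as $[x^{2\ell-1}]\bigl((1+x)^{2r}-1\bigr)(2+x)^{-2k}$ (and $k\leftrightarrow\ell$), followed by the residue-at-infinity argument and the involution $z\mapsto -2-w$. You even anticipate the one place that requires care (orientation/sign bookkeeping in the substitution), which the paper handles slightly tersely; the conceptual content matches exactly.
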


\subsection{\texorpdfstring{Computing the dimension of $\im P_e$}{Computing the dimension of im P\_e}}\label{sec:evenetadimension}

\blockdimcount*
\begin{proof}
Suppose $q(x_1,x_2,x_3)\in \ker P_e$.
Then \autoref{rel3} implies
\begin{equation*}
\begin{split}
q(x_1,x_2,x_3)=\sum_{i+j=2n} \begin{aligned}[t] \alpha_{i,j} & (x_1-x_2)^i(x_2-x_3)^j+\beta_{i,j}(x_1+x_2)^i(x_2-x_3)^j\\[-1ex]
& {} + \gamma_{i,j}(x_1-x_2)^i(x_2+x_3)^j+\delta_{i,j}(x_1+x_2)^i(x_2+x_3)^j \,. \end{aligned}
\end{split}
\end{equation*}
Define $q_\star(x_1,x_2,x_3)\coloneqq \frac{1}{4}\left(q(x_1,x_2,x_3)-q(-x_1,x_2,x_3)-q(x_1,x_2,-x_3)+q(-x_1,x_2,-x_3)\right)$;\linebreak[4] this is the part of $q$ that is odd in $x_1\text{ and } x_3$ and even in $x_2$. We can write 
\begin{equation*}
\begin{split}
q_\star(x_1,x_2,x_3)=\sum_{\substack{i+j=2n\\ i,j>0}} \begin{aligned}[t] \rho_{i,j}\big( & (x_1-x_2)^i(x_2-x_3)^j+(-1)^{i+1}(x_1+x_2)^i(x_2-x_3)^j \\
&{} -(x_1-x_2)^i(x_2+x_3)^j+(-1)^i(x_1+x_2)^i(x_2+x_3)^j \big) \end{aligned}
\end{split}
\end{equation*}
where $\rho_{i,j}\coloneqq \alpha_{i,j}+(-1)^{i+1}\beta_{i,j}-\gamma_{i,j}+(-1)^i\delta_{i,j}$. As $q(x_1,x_2,x_3)=-q(x_3,x_2,x_1)$, the same holds for $q_\star(x_1,x_2,x_3)$ and thus $\rho_{i,j}=-\rho_{j,i}$.

Then, as $q_e(x_1,x_2,x_3)=0$, and $q(x_1,x_2,x_3)=q(x_2,x_3,x_1)$, we must have 
\[q(x_1,x_2,x_3)=q_\star(x_1,x_2,x_3)+q_\star(x_2,x_3,x_1)+q_\star(x_3,x_1,x_3)\,.\]
Thus, $q$ is uniquely determined by $q_\star$. We currently have $n-1$ free variables in $q_\star$, so in order for $\dim \ker P_e$ to be equal to $\lfloor\frac{n-1}{2}\rfloor$, we need \eqref{rel2} to impose $\lceil\frac{n-1}{2}\rceil$ independent constraints on the $\rho_{i,j}$.

Writing \eqref{rel2} in terms of $q_\star(x_1,x_2,x_3)$, we find that we must have
$$q_\star(z,0,y)=2q_\star(z,y,y)-2q_\star(y,z,z).$$
Evaluating the coefficient of $y^kz^l$ in this equation we obtain
$$\rho_{l,k}=\sum_{\substack{0<j\leq k\\i+j=2n}}(-2)^j\binom{i}{l}\rho_{i,j} - \sum_{\substack{0<j\leq l\\i+j=2n}}(-2)^j\binom{i}{k}\rho_{i,j}$$
if $k$ is odd, and $0=0$ if $k$ is even, or if $k=l$. As the coefficient of $y^lz^k$ is just the negative of this, this gives us $\lceil\frac{n-1}{2}\rceil$ equations, so it suffices to show that they are independent. As we are solving for rational $\rho_{i,j}$, it is sufficient to show that these equations are independent modulo 2. But modulo 2 we obtain the system of equations
$$\{\rho_{l,k}\equiv 0 \pmod*{2} \} \,,$$
which are clearly independent. Hence, we have $\lfloor\frac{n-1}{2}\rfloor$ free variables in $q_\star$ and $\dim \ker P_e=\lfloor\frac{n-1}{2}\rfloor$.

Similarly, if $q(x_1,x_2,x_3)\in\im P_e$, then
\begin{equation*}
\begin{split}
q(x_1,x_2,x_3)=\sum_{\substack{i+j=2n\\ i,j\geq 0}} \begin{aligned}[t] \eta_{i,j}\big( & (x_1-x_2)^i(x_2-x_3)^j+(-1)^{i}(x_1+x_2)^i(x_2-x_3)^j \\
& {} +(x_1-x_2)^i(x_2+x_3)^j+(-1)^i(x_1+x_2)^i(x_2+x_3)^j\big) \,. \end{aligned}
\end{split}
\end{equation*}
Indeed, the set
\begin{align*}
\mathcal{Q}\coloneqq  \big\{&(x_1-x_2)^i(x_2-x_3)^j+(-1)^{i}(x_1+x_2)^i(x_2-x_3)^j\\
&+(x_1-x_2)^i(x_2+x_3)^j+(-1)^i(x_1+x_2)^i(x_2+x_3)^j\big\}_{i+j=2n}
\end{align*}
 forms a basis for the space of totally even solutions of
$$\diffrbg{q}=0 \,.$$
Then, as $\frac{1}{2}\left(q(0,y,z)-q(0,y,-z)\right)=q(-y,y,z)-q(y,-z,z)$ holds trivially for any totally even polynomial satisfying the symmetry conditions, it is sufficient to compute the dimension of the subspace of skew-symmetric polynomials spanned by $\mathcal{Q}$. This is a simple representation theoretic argument: we consider $\mathbf{Span}(\mathcal{Q})$ as a representation of the symmetric group $\mathcal{S}_3$ via the standard polynomial representation, and compute the dimension of the sign representation within this. In particular, representation theory says that
\begin{equation*}
\begin{split}
\dim \im P_e& {} =\frac{1}{6}\big[\Tr(\id)-3\Tr((1\,3))+2\Tr((1\,2\,3))\big]\\
& {} =\frac{1}{6}\big[2n+1-3\Tr((1\,3))+2\Tr((1\,2\,3))\big]\,.
\end{split}
\end{equation*}
Note that the vector spaces generated by $\{(x_1-x_2)^i(x_2-x_3)^j\}_{i+j=2n} $ is invariant under the action of $\mathcal{S}_3$, and  there is a natural surjection onto $\mathcal{Q}$, so it is sufficient to consider the trace of the action restricted to $\{(x_1-x_2)^i(x_2-x_3)^j\}_{i+j=2n}$ in order to get an upper bound.

Clearly, the trace of $(1\,3)$ is $1$, as the only diagonal entry corresponds to $(x_1-x_2)^n(x_2-x_3)^n\mapsto (x_3-x_2)^n(x_2-x_1)^n$. Now, computing the trace of $(1\,2\,3)$, we find that it is given by
$$\sum_{i=0}^{2n}(-1)^i\binom{2n-i}{i}\,.$$
To compute this, we consider the generating series
\begin{equation*}
\begin{split}
\sum_{n\geq 0}\sum_{i=0}^{n}\binom{n-i}{i}x^iy^{n}&=\sum_{k\geq 0}\sum_{i\geq 0}\binom{k}{i}(xy)^iy^k\\
&=\sum_{k\geq 0}(1+xy)^ky^k\\
&=\frac{1}{1-y-xy^2}\,.
\end{split}
\end{equation*}
Setting $x=-1$, we obtain
\begin{equation*}
\begin{split}
\sum_{n\geq 0}\sum_{i=0}^n (-1)^i\binom{n-i}{i}(-y)^n&=\frac{1}{1+y+y^2}\\
&=\frac{1-y}{1-y^3}\\
&=\sum_{m\geq 0}y^{3m}-y^{3m+1}\,.
\end{split}
\end{equation*}
Thus, 
\[\sum_{i=0}^{2n}(-1)^i\binom{2n-i}{i} =
\begin{cases}
1 &\text{if $2n\equiv 0 \pmod{6}$}\\
-1 &\text{if $2n\equiv 4 \pmod{6}$}\\
0 &\text{if $2n\equiv 2 \pmod{6}$}
\end{cases}
\]
Hence
$$\dim\im P_e\leq \frac{1}{6}\left(2n+1-3 + 2x\right),$$
where $x$ is determined by $2n \pmod*{6}$. A quick consideration of each case shows we obtain $\lfloor\frac{2n}{6}\rfloor=\lfloor\frac{n}{3}\rfloor$.
\end{proof}
\begin{remark}
Recall that the weight $2n+2$ part of $\mathfrak{rbg}_2$ is a subspace of $V_n$ of dimension $\dim \gmot_{2,2n+2}$. In order to describe this subspace in terms of relations, we need to find non-zero linear maps $\{R_i:V_n\to W_i\}_{i\in I}$  such that $\mathfrak{rbg}_{2,2n+2}\subset \cap_{i\in I} \ker R_i$. However, the projection $\mathfrak{rbg}_{2,2n+2}\to \ker P_e$ is an isomorphism, and, as a consequence of \autoref{prop:alldouble}, $P_e(\mathfrak{rbg}_{2,2n+2})=\im P_e$. As such, no such $R$ can impose any additional relations that restrict non-trivially to either the totally even or not-totally-even parts. Equivalently, any such $R$ must induce a map $\ker P_e\to \Gr(k_R,\im P_e)$ to the space of $k_R$-dimensional subspaces of $\im P_e$ for some unique integer $k_R$. From another perspective, if such a description of $\mathfrak{rbg}_{2,2n+2}$ can be found, this would give an alternative proof of \autoref{conj:depthzetadim} in depth 2.
\end{remark}

\section{\texorpdfstring{Applications to multiple \( t \) values}{Applications to multiple t  values}}

From \cite{HoffmanOdd19}, we recall the multiple $t$ value \( t(n_1,\ldots,n_d) \) is defined by restricting the denominators in the series defining an MZV to be odd.  Namely
\[
    t(k_1,\ldots,k_d)\coloneqq\sum_{0<n_1<\cdots<n_d}\frac{1}{(2n_1-1)^{k_1}\cdots (2n_d-1)^{k_d}} \,.
\]

By inserting the factor \( \frac{1}{2} (1 - (-1)^{i_j}) \) into the numerator, one may extend the sum to all denominators, and obtain the following expression \cite[Corollary 4.1]{HoffmanOdd19}
for \( t(n_1,\ldots,n_d) \) in terms of alternating MZVs (with various signs) of the same set of indices
\begin{align}
    t(k_1,\ldots,k_d) &= \frac{1}{2^d} \sum_{0 < n_1 < \cdots < n_d} \frac{(1  -(-1)^{n_1}) \cdots (1 - (-1)^{n_d})}{n_1^{k_1} \cdots n_d^{k_d}} \notag \\
    \label{eqn:tasaltz} &= \frac{1}{2^d} \sum_{\eps_1,\ldots,\eps_d \in \{ \pm 1 \}} \eps_1 \cdots \eps_d \zeta(\eps_1 \diamond k_1, \cdots \eps_d \diamond k_d)
    \,.
\end{align}
Here the operator \( \diamond \) is defined so that \( 1 \diamond x = x \) and \( -1 \diamond x = \overline{x} \), where as usual \( \overline{n_j} \) denotes the argument \( n_j \) is accompanied with sign \( \eps_i = -1 \) in the definition of an alternating MZV (giving character \( (-1)^{n_j} \) in the numerator thereof). \medskip

From Murakami \cite[Theorem 1]{MurakamiMtVs21}, we know that every multiple \( t \) value with all arguments \( \geq 2 \) -- which would \emph{a priori} be a combination of alternating MZVs -- satisfies a Galois descent, and is expressible as a \( \mathbb{Q} \)-linear combination of classical multiple zeta values.  Murakami's Theorem is actually a statement about motivic multiple \( t \) values, but gives the same descent for classical M$t$Vs after applying the period map.  However Murakami's result is purely existential does not give an explicit formula, nor does it put any limits on the depth of the resulting combination.  Using the result \autoref{prop:zevbarevbar:eval} for alternating double zeta values, we will give explicit formulae in terms of depth 2 classical MZVs for any \( t(\mathrm{ev}, \mathrm{ev}) \) in \autoref{prop:tevev}.

\begin{remark}[Galois descent of \( t(\od,\ev) \) and \( t(\ev,\od) \)]
	Observe that the depth-parity theorem in depth 2 for alternating MZVs \cite[Equation 3.5]{PanzerParity16} implies that every multiple \( t(a,b) \) value of odd weight (with \( a, b \neq 1 \)) is a polynomial in single zeta values.  This already gives an explicit formula for the Galois descent of \( t(\text{od},\text{ev}) \) and \( t(\text{ev}, \text{od}) \).  Equivalent formulae were derived in \cite[Theorems 4.1, and 4.2]{XuParametric} using contour integral techniques (compare \cite{Flajolet} for classical MZVs handled in a similar way), namely 
	\begin{align*}
	t(2a+1,2b) & {} = \begin{aligned}[t]
	& t(2a+1)t(2b) - \frac{1}{2} t(2a+2b+1)
	\\
	& - \sum_{s=1}^{a+b} \bigg\{ \binom{2a+2b-2s}{2a} + \binom{2a+2b-2s}{2b-1} \bigg\} \frac{\zeta(2a+2b+1-2s)}{2^{2a+2b+1-2s}} t(2s) \,, \end{aligned} \\[2ex]
	t(2a,2b+1) & {} = \begin{aligned}[t]
	& - \frac{1}{2} t(2a+2b+1)
	\\
	& + \sum_{s=1}^{a+b} \bigg\{ \binom{2a+2b-2s}{2b} + \binom{2a+2b-2s}{2a-1} \bigg\} \frac{\zeta(2a+2b+1-2s)}{2^{2a+2b+1-2s}} t(2s) \,. \end{aligned} 
	\end{align*}
	(One has that \( t(a) = (1 - 2^{-a}) \zeta(a) \), for \( a > 1 \), which can be applied to rewrite the above purely in terms of Riemann zeta values.)
\end{remark}

\begin{remark}[Galois descent of \( t(\od,\od) \)]\label{rem:galoisdepth}
    On the other hand, the remaining case involving \( t(\od,\od) \) is less tractable.  Using the MZV Data Mine \cite{MZVDM}, one can check the following relation \[
        t(3,9) = 
        \begin{aligned}[t]
        & \frac{9}{128} \zeta (1,1,4,6)
        +\frac{1305}{4096}\zeta (3,9)
        -\frac{27}{128} \zeta (2) \zeta (3,7)
        -\frac{27}{256} \zeta (4) \zeta (3,5) \\
        & +\frac{3131}{2048}\zeta (9) \zeta (3)
        -\frac{321}{1024} \zeta (5) \zeta(7)
        -\frac{3}{512} \zeta (3)^4
        -\frac{45}{64} \zeta (2)  \zeta (7) \zeta (3)
        -\frac{63}{256} \zeta (2) \zeta (5)^2
         \\ 
         & +\frac{9}{256} \zeta (4) \zeta (5) \zeta (3)
        +\frac{81}{512} \zeta (6) \zeta (3)^2
        +\frac{353139}{5660672}\zeta (12) \,.
        \end{aligned} 
    \]
    In particular, the (conjecturally) irreducible depth 4 MZV \( \zeta(1,1,4,6) \) (or any equivalent choice) is necessary to obtain an expression for the Galois descent of \( t(3,9) \) to classical MZVs.  This already suggests describing the Galois descent explicitly (with the minimal necessary depth) would be challenging.
    
    We can, conjecturally at least, say that depth 4 MZVs will be sufficient. Indeed, since we may write $t(a,b)$ as a sum of depth 2 alternating MZVs, the alternating analogue of \autoref{lem:depthsubblock} tells us that $t(a,b)$ lies in coradical degree at most 2. Hence, if a Galois descent to classical MZVs exists, it must also like in coradical degree at most 2. When depth 2 MZVs do not span this space in even weight, the homological version of the Broadhurst-Kreimer Conjecture \cite[Conjecture 5]{BrownDepthGraded21} tells us that depth 2 MZVs along with irreducible depth 4 MZVs coming from cusp forms do.
        
    More generally, if a depth $d$ multiple $t$ value has a Galois descent to classical MZVs, the same line of reasoning tells us that we should expect an expression in terms of MZVs of depth at most $2d$.
\end{remark}

By combining the usual expression for \( t(a,b) \) in terms of alternating MZVs \cite[Corollary 4.1]{HoffmanOdd19}, namely
\[
	t(a,b) = \frac{1}{4} \big( \zeta(a,b) + \zeta(a, \overline{b}) + \zeta(\overline{a},b) + \zeta(\overline{a},\overline{b}) \big)
\]
with the distribution relation \cite[Proposition 2.13]{GoncharovMPL01}
\[
	\zeta(a,b) + \zeta(a,\overline{b}) + \zeta(\overline{a},b) + \zeta(\overline{a},\overline{b}) = \frac{1}{2^{a+b-2}} \zeta(a,b) \,,
\]
we can write
\begin{equation}\label{eqn:tab:evenbars}
	t(a,b) = \frac{1}{2} \zeta(\overline{a},\overline{b}) + \frac{1}{2} \zeta(a,b) - \frac{1}{2^{a+b}} \zeta(a,b) \,.
\end{equation}
(More generally, see the alternative expression given by Hoffman, using a sum which inserts only an even number of bars into the argument string \cite[Corollary 4.2]{HoffmanOdd19}.)

Let us now note the following result from \autoref{sec:gendouble}, which gives an explicit form for the Galois descent of \( \zeta(\overline{2\ell}, \overline{2k}) \) in terms of classical double MZVs.

\begin{restatable*}[Galois descent of \( \zeta(\overline{2\ell},\overline{2k}) \)]{prop}{propgaldescent} \label{prop:zevbarevbar:eval}
	The alternating double zeta value \( \zeta(\overline{2\ell},\overline{2k}) \) enjoys a Galois descent to classical depth 2 MZVs as follows
	\begin{equation}
		\label{eqn:zaltevev}
		\begin{aligned}[c]
		\zeta(\overline{2\ell},\overline{2k}) = 
		\begin{aligned}[t]
			& \sum_{i=2}^{2k + 2\ell - 2} 2^{-i} \bigg\{ \binom{i-1}{2k-1} \zeta(2k+2\ell-i,i) + \binom{i-1}{2\ell-1} \zeta(i,2k+2\ell-i) \bigg\} \\
			& -\zeta(2\ell,2k) + \sum_{r=2}^{2k+2\ell-2} (-2)^{-r} \binom{r-1}{2k-1}  \zeta(r) \zeta(2k+2\ell-r) \\
			& - 2^{-2k-2\ell} \bigg\{ 2 \binom{2k+2\ell-2}{2k-1} + \binom{2k+2\ell-1}{2k-1} \bigg\} \zeta(2k + 2\ell) \,.
		\end{aligned}
		\end{aligned}
	\end{equation}
\end{restatable*}

\begin{proof}[Proof sketch]
    We recall the notation \( \zeta_\ell(k_1,\ldots,k_d) \) is defined by inserting \( \ell \) leading 0's at the start of the integral representation of \( \zeta(k_1,\ldots,k_d) \) (c.f. \autoref{eqn:zasint} or \autoref{eqn:altmzv:int} for alternating MZVs).  Now simultaneously solve the following equations: the dihedral symmetry \autoref{eqn:dih:1and2lbar}
    \begin{equation*}
        \begin{aligned}
 \zeta_{2k-1}(1, \overline{2\ell}) - {} & \zeta(\overline{2\ell}, \overline{2k}) = \binom{2k+2\ell-1}{2k-1} \zeta(\overline{2k+2\ell}) \\ & - \sum_{\substack{r = 1}}^{2k+2\ell-2} \bigg( (-1)^r \binom{r-1}{2k-1} + \binom{r-1}{2\ell-1}  \! \bigg) \zeta(\overline{r}) \zeta(2k+2\ell-r) \,, \end{aligned}
\end{equation*}
    and the generalised doubling identity \cite[\S4]{MZVDM}, \cite[Section 14.2.5]{ZhaoBook}
    \begin{align*}
	& \zeta(\overline{s},\overline{t}) + (-1)^t \zeta_{t-1}(1,\overline{s}) = \\
	& \sum_{i=1}^s \binom{s+t-i-1}{t-1} 2^{1+i-s-t} \zeta(i, s+t-i) + \sum_{i=1}^t \binom{s+t-i-1}{s-1} 2^{1+i-s-t} \zeta(s+t-i, i) \\
	& - \zeta(s,t) + (-1)^t \zeta_{t-1}(s,1) - \sum_{i=1}^t \binom{s+t-i-1}{s-1} \zeta(\overline{s+t-i})\zeta(i)
	- \binom{s+t-1}{s} \zeta(s+t) 
\end{align*}
(here slightly rewritten, see \autoref{sec:gendouble}) in the case \( t = 2k, s = 2\ell \).
\end{proof}

Now substituting this Galois descent into \autoref{eqn:tab:evenbars}, we immediate have the following proposition.

\begin{prop}\label{prop:tevev}
	The multiple \( t \) value \( t(2\ell,2k) \) is expressed through classical double zeta values as follows
		\begin{equation}
		\begin{aligned}[c]
		t(2\ell,2k) = 
		\begin{aligned}[t]
		& \sum_{i=2}^{2k + 2\ell - 2} 2^{-i-1} \bigg\{ \binom{i-1}{2k-1} \zeta(2k+2\ell-i,i) + \binom{i-1}{2\ell-1} \zeta(i,2k+2\ell-i) \bigg\} \\
		& -2^{-2k-2\ell} \zeta(2\ell,2k) - \sum_{r=2}^{2k+2\ell-2} (-2)^{-r-1} \binom{r-1}{2k-1}  \zeta(r) \zeta(2k+2\ell-r) \\
		& - 2^{-2k-2\ell-1} \bigg\{ 2 \binom{2k+2\ell-2}{2k-1} + \binom{2k+2\ell-1}{2k-1} \bigg\} \zeta(2k + 2\ell) \,.
		\end{aligned}
		\end{aligned}
		\end{equation}
\end{prop}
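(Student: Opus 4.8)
The plan is to obtain \autoref{prop:tevev} as a direct substitution, with no genuinely new input: one only has to feed the Galois descent of \autoref{prop:zevbarevbar:eval} into the reduction \eqref{eqn:tab:evenbars}. First I would specialise \eqref{eqn:tab:evenbars} to $a = 2\ell$, $b = 2k$ (which is legitimate since $k,\ell \geq 1$, so all arguments are $\geq 2$ and no regularisation subtlety intervenes), giving
\[
	t(2\ell,2k) = \tfrac{1}{2}\zeta(\overline{2\ell},\overline{2k}) + \tfrac{1}{2}\zeta(2\ell,2k) - 2^{-2k-2\ell}\zeta(2\ell,2k) \,.
\]
Then I would replace $\zeta(\overline{2\ell},\overline{2k})$ by the right-hand side of \eqref{eqn:zaltevev}, and collect terms.

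The remaining work is pure bookkeeping. Halving the descent formula multiplies the double-zeta sum by $2^{-i}/2 = 2^{-i-1}$, turns the single-zeta coefficient into $2^{-2k-2\ell-1}\{2\binom{2k+2\ell-2}{2k-1} + \binom{2k+2\ell-1}{2k-1}\}$, and, using $\tfrac12(-2)^{-r} = -(-2)^{-r-1}$, turns the product term $+\sum_r (-2)^{-r}\binom{r-1}{2k-1}\zeta(r)\zeta(2k+2\ell-r)$ into $-\sum_r (-2)^{-r-1}\binom{r-1}{2k-1}\zeta(r)\zeta(2k+2\ell-r)$. The one term needing a moment's care is the diagonal $\zeta(2\ell,2k)$: the descent contributes $-\tfrac12\zeta(2\ell,2k)$, which combines with the $+\tfrac12\zeta(2\ell,2k) - 2^{-2k-2\ell}\zeta(2\ell,2k)$ already present in \eqref{eqn:tab:evenbars} to leave exactly $-2^{-2k-2\ell}\zeta(2\ell,2k)$. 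Assembling these pieces reproduces the claimed identity verbatim.

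There is no real obstacle at this stage: all the substantive effort has already gone into establishing \eqref{eqn:tab:evenbars} (via Hoffman's expansion of $t$ in alternating MZVs together with the Goncharov distribution relation) and into \autoref{prop:zevbarevbar:eval} (the Galois descent, which is the genuinely hard ingredient). The only place a slip is easy is tracking signs and powers of $2$ when the product-term coefficient $(-2)^{-r}$ is halved; as a safeguard I would check the final formula numerically in a small case, e.g. $t(2,2)$ or $t(2,4)$, to confirm that no factor of $2$ or sign has been dropped.
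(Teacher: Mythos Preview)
Your proposal is correct and follows exactly the paper's own approach: the paper states that substituting the Galois descent of \autoref{prop:zevbarevbar:eval} into \eqref{eqn:tab:evenbars} immediately gives \autoref{prop:tevev}, and you have carried out precisely this substitution with the bookkeeping made explicit.
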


\bibliographystyle{habbrv}
     \bibliography{bibliography}

\begin{thebibliography}{10}
\expandafter\ifx\csname url\endcsname\relax
  \def\url#1{\texttt{#1}}\fi
\expandafter\ifx\csname doi\endcsname\relax
  \def\doi#1{\burlalt{doi:#1}{http://dx.doi.org/#1}}\fi
\expandafter\ifx\csname urlprefix\endcsname\relax\def\urlprefix{URL }\fi
\expandafter\ifx\csname href\endcsname\relax
  \def\href#1#2{#2}\fi
\expandafter\ifx\csname burlalt\endcsname\relax
  \def\burlalt#1#2{\href{#2}{#1}}\fi

\bibitem{MZVDM}
J.~Bl\"{u}mlein, D.~J. Broadhurst, and J.~A.~M. Vermaseren.
\newblock The multiple zeta value data mine.
\newblock {\em Comput. Phys. Comm.}, 181(3):582--625, 2010,
  arXiv:\burlalt{0907.2557}{http://arxiv.org/abs/0907.2557}.
\newblock \doi{10.1016/j.cpc.2009.11.007}.

\bibitem{BBBkfold}
J.~M. Borwein, D.~M. Bradley, and D.~J. Broadhurst.
\newblock Evaluations of {$k$}-fold {E}uler/{Z}agier sums: a compendium of
  results for arbitrary {$k$}.
\newblock {\em Electron. J. Combin.}, 4(2):\#5, 1997,
  arXiv:\burlalt{hep-th/9611004}{http://arxiv.org/abs/hep-th/9611004}.
\newblock
  \urlprefix\url{http://www.combinatorics.org/Volume_4/Abstracts/v4i2r5.html}.
\newblock The Wilf Festschrift (Philadelphia, PA, 1996).

\bibitem{BroadhurstKConj96}
D.~J. Broadhurst and D.~Kreimer.
\newblock {Association of multiple zeta values with positive knots via Feynman
  diagrams up to 9 loops}.
\newblock {\em Phys. Lett. B}, 393:403--412, 1997,
  arXiv:\burlalt{hep-th/9609128}{http://arxiv.org/abs/hep-th/9609128}.
\newblock \doi{10.1016/S0370-2693(96)01623-1}.

\bibitem{BrownLetter16}
F.~{Brown}.
\newblock Letter to {C}harlton (11/10/2016).
\newblock \urlprefix\url{http://www.ihes.fr/~brown/BlockFiltration.pdf}.

\bibitem{BrownMTM12}
F.~Brown.
\newblock Mixed {T}ate motives over {$\Z$}.
\newblock {\em Annals of Mathematics}, pages 949--976, 2012,
  arXiv:\burlalt{1102.1312}{http://arxiv.org/abs/1102.1312}.
\newblock \doi{10.4007/annals.2012.175.2.10}.

\bibitem{BrownMotivicPeriodSummary}
F.~{Brown}.
\newblock Motivic periods and the projective line minus 3 points.
\newblock {\em Proceedings of the ICM}, 2014,
  arXiv:\burlalt{1407.5165}{http://arxiv.org/abs/1407.5165}.

\bibitem{BrownMotivicPeriodNotes}
F.~Brown.
\newblock Notes on {M}otivic {P}eriods, 2015,
  arXiv:\burlalt{1412.06410}{http://arxiv.org/abs/1412.06410}.

\bibitem{BrownZeta317}
F.~{Brown}.
\newblock Zeta elements in depth 3 and the fundamental {Lie} algebra of a
  punctured elliptic curve.
\newblock {\em Forum Math. Sigma}, 5:1--56, 2017,
  arXiv:\burlalt{1504.04737}{http://arxiv.org/abs/1504.04737}.
\newblock \doi{10.1017/fms.2016.29}.

\bibitem{BrownDepthGraded21}
F.~Brown.
\newblock Depth-graded motivic multiple zeta values.
\newblock {\em Compositio Mathematica}, 157(3):529–572, 2021,
  arXiv:\burlalt{1301.3053}{http://arxiv.org/abs/1301.3053}.
\newblock \doi{10.1112/S0010437X20007654}.

\bibitem{BrownDecomposition12}
F.~C.~S. Brown.
\newblock On the decomposition of motivic multiple zeta values.
\newblock In {\em Galois-{T}eichm\"{u}ller theory and arithmetic geometry},
  volume~63 of {\em Adv. Stud. Pure Math.}, pages 31--58. Math. Soc. Japan,
  Tokyo, 2012, arXiv:\burlalt{1102.1310}{http://arxiv.org/abs/1102.1310}.
\newblock \doi{10.2969/aspm/06310031}.

\bibitem{CharltonThesis}
S.~Charlton.
\newblock {\em Identities arising from coproducts on multiple zeta values and
  multiple polylogarithms}.
\newblock PhD thesis, University of Durham, 2016.
\newblock \urlprefix\url{http://etheses.dur.ac.uk/11834/}.

\bibitem{CharltonCyclic20}
S.~Charlton.
\newblock Analogues of cyclic insertion-type identities for multiple zeta star
  values.
\newblock {\em Kyushu J. Math.}, 74(2):337--352, 2020,
  arXiv:\burlalt{1806.10053}{http://arxiv.org/abs/1806.10053}.
\newblock \doi{10.2206/kyushujm.74.337}.

\bibitem{CharltonBlock21}
S.~Charlton.
\newblock {The Alternating Block Decomposition of Iterated Integrals and Cyclic
  Insertion on Multiple Zeta Values}.
\newblock {\em The Quarterly Journal of Mathematics}, 72(3):975--1028, 2021,
  arXiv:\burlalt{1703.03784}{http://arxiv.org/abs/1703.03784}.
\newblock \doi{10.1093/qmath/haaa056}.

\bibitem{delmixedtate}
P.~Deligne and A.~B. Goncharov.
\newblock Groupes fondamentaux motiviques de {T}ate mixte.
\newblock {\em Ann. Sci. \'{E}cole Norm. Sup. (4)}, 38(1):1--56, 2005,
  arXiv:\burlalt{math/0302267}{http://arxiv.org/abs/math/0302267}.
\newblock \doi{10.1016/j.ansens.2004.11.001}.

\bibitem{Flajolet}
P.~Flajolet and B.~Salvy.
\newblock Euler sums and contour integral representations.
\newblock {\em Experiment. Math.}, 7(1):15--35, 1998.
\newblock \urlprefix\url{http://projecteuclid.org/euclid.em/1047674270}.

\bibitem{GKZ06}
H.~Gangl, M.~Kaneko, and D.~Zagier.
\newblock Double zeta values and modular forms.
\newblock {\em Automorphic forms and zeta functions}, pages 71--106, 2006.

\bibitem{GlanoisThesis16}
C.~Glanois.
\newblock {\em Periods of the motivic fundamental groupoid of \( \mathbb{P}^1
  \setminus \{ 0, \mu_N, \infty \} \)}.
\newblock PhD thesis, Pierre and Marie Curie University (Paris 6), 2016,
  arXiv:\burlalt{1603.05155}{http://arxiv.org/abs/1603.05155}.
\newblock \urlprefix\url{https://www.theses.fr/2016PA066013}.

\bibitem{GlanoisBasis16}
C.~Glanois.
\newblock Unramified {E}uler sums and {H}offman $\star$ basis, 2016,
  arXiv:\burlalt{1603.05178}{http://arxiv.org/abs/1603.05178}.

\bibitem{GoncharovMPL01}
A.~B. Goncharov.
\newblock Multiple polylogarithms and mixed {T}ate motives, 2001,
  arXiv:\burlalt{math/0103059}{http://arxiv.org/abs/math/0103059}.

\bibitem{GoncharovGalois01}
A.~B. Goncharov.
\newblock Galois symmetries of fundamental groupoids and noncommutative
  geometry.
\newblock {\em Duke Math. J.}, 128(2):209--284, 2005,
  arXiv:\burlalt{math/0208144}{http://arxiv.org/abs/math/0208144}.
\newblock \doi{10.1215/S0012-7094-04-12822-2}.

\bibitem{HIROSEConfluence}
M.~Hirose and N.~Sato.
\newblock Iterated integrals on {$\mathbb{P}^1\setminus\{0,1,\infty,z\}$} and a
  class of relations among multiple zeta values.
\newblock {\em Advances in Mathematics}, 348:163--182, 2019,
  arXiv:\burlalt{1801.03807}{http://arxiv.org/abs/1801.03807}.
\newblock \doi{https://doi.org/10.1016/j.aim.2019.03.005}.

\bibitem{HoffmanOdd19}
M.~E. Hoffman.
\newblock An odd variant of multiple zeta values.
\newblock {\em Commun. Number Theory Phys.}, 13(3):529--567, 2019,
  arXiv:\burlalt{1612.05232}{http://arxiv.org/abs/1612.05232}.

\bibitem{HoffmanQuasi20}
M.~E. Hoffman.
\newblock Quasi-shuffle algebras and applications.
\newblock In {\em Algebraic combinatorics, resurgence, moulds and applications
  ({CARMA}). {V}ol. 2}, volume~32 of {\em IRMA Lect. Math. Theor. Phys.}, pages
  327--348. EMS Publ. House, Berlin, 2020,
  arXiv:\burlalt{1805.12464}{http://arxiv.org/abs/1805.12464}.

\bibitem{IKZ06}
K.~Ihara, M.~Kaneko, and D.~Zagier.
\newblock Derivation and double shuffle relations for multiple zeta values.
\newblock {\em Compositio Mathematica}, 142(2):307–338, 2006.
\newblock \doi{10.1112/S0010437X0500182X}.

\bibitem{iharalin}
K.~Ihara and H.~Ochiai.
\newblock Symmetry on linear relations for multiple zeta values.
\newblock {\em Nagoya Mathematical Journal}, 189:49--62, 2008.

\bibitem{Ihara02}
Y.~Ihara.
\newblock Some arithmetic aspects of galois actions on the pro-p fundamental
  group of $\mathbb{P}^1\setminus\{0,1,\infty\}$.
\newblock {\em Proceedings Symp. in Pure Math.}, 70:247--273, 2002.

\bibitem{KeilthyThesis20}
A.~Keilthy.
\newblock Rational structures on multiple zeta values.
\newblock {\em PhD thesis, University of Oxford}, 2020.
\newblock
  \urlprefix\url{https://ora.ox.ac.uk/objects/uuid:f46cf1e1-f5d7-45c8-b8c3-c55b3caf139f}.

\bibitem{KeilthyBlock21}
A.~Keilthy.
\newblock Motivic multiple zeta values and the block filtration.
\newblock {\em Journal of Number Theory}, 2021,
  arXiv:\burlalt{2006.03003}{http://arxiv.org/abs/2006.03003}.
\newblock \doi{10.1016/j.jnt.2021.10.006}.

\bibitem{LeMurakamiMZVAssoc}
T.~Q.~T. Le and J.~Murakami.
\newblock Kontsevich's integral for the {H}omfly polynomial and relations
  between values of multiple zeta functions.
\newblock {\em Topology and its Applications}, 62(2):193--206, 1995.

\bibitem{LiIdentities13}
Z.-H. Li.
\newblock Some identities in the harmonic algebra concerned with multiple zeta
  values.
\newblock {\em Int. J. Number Theory}, 9(3):783--798, 2013.
\newblock \doi{10.1142/S1793042112501618}.

\bibitem{MurakamiMtVs21}
T.~Murakami.
\newblock On {H}offman's {$t$}-values of maximal height and generators of
  multiple zeta values.
\newblock {\em Math. Ann.}, 382(1-2):421--458, 2022.
\newblock \doi{10.1007/s00208-021-02209-3}.

\bibitem{PanzerParity16}
E.~Panzer.
\newblock The parity theorem for multiple polylogarithms.
\newblock {\em J. Number Theory}, 172:93--113, 2017,
  arXiv:\burlalt{1512.04482}{http://arxiv.org/abs/1512.04482}.
\newblock \doi{10.1016/j.jnt.2016.08.004}.

\bibitem{RacinetThesis}
G.~Racinet.
\newblock {\em S{\'e}ries g{\'e}n{\'e}ratrices non-commutatives de
  polyz{\^e}tas et associateurs de Drinfeld}.
\newblock PhD thesis, Universit{\'e} de Picardie Jules Verne, 2000.
\newblock \urlprefix\url{https://tel.archives-ouvertes.fr/tel-00110891/}.

\bibitem{RacinetDoubleShuff}
G.~Racinet.
\newblock Doubles m{\'e}langes des polylogarithmes multiples aux racines de
  l’unit{\'e}.
\newblock {\em Publications math{\'e}matiques de l'IH{\'E}S}, 95(1):185--231,
  2002.

\bibitem{SchnepsPoisson06}
L.~Schneps.
\newblock On the poisson bracket on the free lie algebra in two generators.
\newblock {\em J. Lie Theory}, 16(1), 2006.

\bibitem{SouderesDblSh}
I.~Soud\`eres.
\newblock Motivic double shuffle.
\newblock {\em Int. J. Number Theory}, 6(2):339--370, 2010,
  arXiv:\burlalt{0808.0248}{http://arxiv.org/abs/0808.0248}.
\newblock \doi{10.1142/S1793042110002995}.

\bibitem{XuParametric}
C.~Xu and L.~Yan.
\newblock Parametric euler $ t $-sums of odd harmonic numbers, 2022,
  arXiv:\burlalt{2203.13996}{http://arxiv.org/abs/2203.13996}.

\bibitem{YamamotoInterp}
S.~Yamamoto.
\newblock Interpolation of multiple zeta and zeta-star values.
\newblock {\em J. Algebra}, 385:102--114, 2013,
  arXiv:\burlalt{1203.1118}{http://arxiv.org/abs/1203.1118}.
\newblock \doi{10.1016/j.jalgebra.2013.03.023}.

\bibitem{zaghoff}
D.~Zagier.
\newblock Evaluation of the multiple zeta values
  {$\zeta(2,\ldots,2,3,2,\ldots,2)$}.
\newblock {\em Annals of Mathematics}, pages 977--1000, 2012.

\bibitem{ZhaoIdentity16}
J.~Zhao.
\newblock Identity families of multiple harmonic sums and multiple zeta star
  values.
\newblock {\em J. Math. Soc. Japan}, 68(4):1669--1694, 2016,
  arXiv:\burlalt{1303.2227}{http://arxiv.org/abs/1303.2227}.
\newblock \doi{10.2969/jmsj/06841669}.

\bibitem{ZhaoBook}
J.~Zhao.
\newblock {\em Multiple zeta functions, multiple polylogarithms and their
  special values}, volume~12 of {\em Series on Number Theory and its
  Applications}.
\newblock World Scientific Publishing Co. Pte. Ltd., Hackensack, NJ, 2016.
\newblock \doi{10.1142/9634}.

\end{thebibliography}

\appendix
\section{\texorpdfstring{Analytic evaluation of \( \zeta(\{2\}^a, 4, \{2\}^b) \) via double zeta values}{Analytic evaluation of zeta(\{2\}\textasciicircum{}a, 4, \{2\}\textasciicircum{}b)}}
\label{sec:num2242}

The goal of this section is to given an explicit evaluation for \( \zeta(\{2\}^a, 4, \{2\}^b) \) in terms of double zeta values on the analytic (numerical) level.  In \autoref{sec:mot2242} we will then lift this to the corresponding identity among motivic MZVs. \medskip

For the numerical evaluation, we need to assemble a number of ingredients.  In particular, we need to use the stuffle antipode (\cite{HoffmanQuasi20}, \cite{LiIdentities13} or \cite{GlanoisThesis16,GlanoisBasis16}) to convert \( \zeta(\{2\}^a, 4, \{2\}^b) \) to a corresponding multiple zeta star value.  Then we can apply Zhao's generalised 2-1 Theorem \cite{ZhaoIdentity16} (in the block decomposition form \cite{CharltonCyclic20} for convenience) to rewrite the zeta star value as an alternating zeta-half value.  By application of the Parity Theorem (\cite{PanzerParity16}, or \cite{GoncharovGalois01}), we reduce this to an explicit combination of depth 2 alternating MZVs.  It becomes convenient to write these (combinations of) alternating double zeta values as certain shuffle-regularised alternating double zetas \( \zeta_z^{\shuffle,0}(r,\overline{s}) \) with a number of initial zeros; this presentation then manifests a dihedral symmetry modulo products and lower depth \cite{GlanoisThesis16},\cite{GlanoisBasis16}, which we can describe explicitly.  Finally (perhaps surprisingly) by combining this dihedral symmetry with a generalised doubling identity \cite{MZVDM,ZhaoBook}, one can explicitly evaluate these alternating double zeta values in terms of classical double zeta values (as opposed to higher depth MZVs which would certainly suffice by the generalised 2-1 Theorem). \medskip

\paragraph{\bf Alternating and interpolated MZVs:} Let us recall again the notions of alternating MZVs, and of multiple zeta star values and multiple zeta-half values, which will be useful imminently.  Given a tuple \( (k_1,k_2,\ldots,k_d) \) of positive integers, and a tuple \( (\eps_1,\eps_2,\ldots,\eps_d) \in \{ \pm 1 \}^d \), with \( (k_d,\eps_d) \neq (1,1) \), we define the \emph{alternating MZV} (or Euler sum) with signs \( \eps_1,\ldots,\eps_d \) as follows,
\[
	\zeta\bigg(\begin{matrix} \eps_1,\eps_2,\ldots,\eps_d \\ k_1,k_2,\ldots,k_d \end{matrix}\bigg) \coloneqq \sum_{0 < n_1 < n_2 < \cdots < n_d} \frac{\eps_1^{n_1} \eps_2^{n_2} \cdots \eps_d^{n_d}}{n_1^{k_1} n_2^{k_2} \cdots n_d^{k_d}} \,.
\]
One then streamlines the notation by suppressing the \( \eps_i \)'s and writing \( \overline{k_i} \) if \( \eps_i = -1 \), and just \( k_i \) if \( \eps_i = 1 \) otherwise.  For example
\[
	\zeta(k_1,\overline{k_2},\overline{k_3})  \coloneqq 	\zeta\bigg(\begin{matrix} 1,& \!\!-1,& \!\!-1 \\ k_1,& \!\!k_2,& \!\!k_3 \end{matrix}\bigg) \coloneqq \sum_{0 < n_1 < n_2 < n_3} \frac{(-1)^{n_2} (-1)^{n_3}}{n_1^{k_1} n_2^{k_2} n_3^{k_3}} \,.
\]
An alternating MZV can be written as an iterated integral in the following way
\begin{equation}\label{eqn:altmzv:int}
	\zeta\bigg(\begin{matrix} \eps_1,\eps_2,\ldots,\eps_d \\ k_1,k_2,\ldots,k_d \end{matrix}\bigg) = (-1)^d I^\mot(0; \eta_1, \{0\}^{k_1-1}, \eta_2, \{0\}^{k_2-1}, \ldots, \eta_d, \{0\}^{k_d-1} ;1) \,,
\end{equation}
where \( \eta_i = \eps_i \eps_{i+1} \cdots \eps_d \). 

Next, we have the interpolated multiple zeta values \( \zeta^r(k_1,\ldots,k_d) \) introduced by Yamamoto \cite{YamamotoInterp},
\[
	\zeta^r(k_1,\ldots,k_d) \coloneqq \sum_{\text{$\circ_i = \text{``$+$'' or ``$,$''}$}} r^{\#\{ i \mid \circ_i = \text{``+''}\}} \zeta(k_1 \, \circ_1 \, k_2 \, \circ_2 \, \cdots \, \circ_{r{-}1} \, k_d) \,.
\]
For example \(
	\zeta^r(a,b,c) = \zeta(a,b,c) + r \zeta(a+b,c) + r \zeta(a, b+c) + r^2 \zeta(a+b+c) \).
In the case \( r = 0 \), only the term with all \( \circ_i = \text{``$,$''} \) survives, and so \( \zeta^0(k_1,\ldots,k_d) = \zeta(k_1,\ldots,k_d)  \).  When \( r = 1 \), then we have \( \zeta^1(k_1,\ldots,k_d) = \zeta^\star(k_1,\ldots,k_d) \), where the \emph{multiple zeta star value} (MZSV) is originally defined as
\[
	\zeta^\star(k_1,\ldots,k_d) \coloneqq \sum_{0 < n_1 \leq n_2 \leq \cdots \leq n_d} \frac{1}{n_1^{k_1} n_2^{k_2} \cdots n_r^{k_d}} \,,
\]
and arises by replacing the strict inequalities between \( n_i, n_{i+1} \) with a non-strict inequality.  For \( r = \half \), we then obtain a new variant `mid-way' between \( \zeta \) and \( \zeta^\star \), called the \emph{multiple zeta-half value}.

This formalism can be extended to allow for alternating interpolated MZVs, by replacing \( \text{``$+$''} \), above with \( \text{``$\oplus$''} \), where \( a \oplus b \) denotes addition of the absolute values and multiplication of the bars viewed as signs (i.e. if \( k \in \mathbb{Z}_{>0} \), we have\( \abs{k} = \abs{\smash{\overline{k}}} = k \) with \( \sgn(\overline{k}) = -1 \) and \( \sgn(k) = 1 \)).  In particular, for \( \alpha,\beta \in \mathbb{Z}_{>0} \), we have \( \alpha \oplus \beta = \alpha + \beta \), \( \alpha \oplus \overline{\beta} = \overline{\alpha} \oplus \beta = \overline{\alpha + \beta} \) and \( \overline{\alpha} \oplus \overline{\beta} = \alpha + \beta \).  Then for example we have the following interpolated alternating MZV
\begin{align*}
	\zeta^r(a, \overline{b},\overline{c}) & {} = \zeta(a,\overline{b},\overline{c})
	 + r \zeta(a\oplus \overline{b},\overline{c})
	 + r \zeta(a, \overline{b}\oplus\overline{c}) 
	 + r^2 \zeta(\overline{a}\oplus \overline{b} \oplus \overline{c}) \\
	 & {} = \zeta(a,\overline{b},\overline{c})
	 + r \zeta(\overline{a+b},\overline{c})
	 + r \zeta(a, b+c) 
	 + r^2 \zeta(\overline{a+b+c}) \,.
\end{align*}
The case \( r = \half \) of alternating interpolated MZVs is a convenient way of formulating Zhao's generalised 2-1 Theorem, as we will see below.

\subsection{Stuffle antipode}

We define \( G^\star \), the generating series of \( \zeta^\star(\{2\}^a, 4 ,\{2\}^b) \), \( G \), a related generating series for \( \zeta(\{2\}^a, 4, \{2\}^b) \), and \( S^\star \) the generating series of \( \zeta^\star(\{2\}^n) \) as follows.
\begin{align*}
	G^\star(x,y) & \coloneqq \sum_{a,b=0}^\infty \zeta^\star(\{2\}^a, 4, \{2\}^b) x^{2a} y^{2b} \,, \\
	G(x,y) & \coloneqq \sum_{a,b=0}^\infty (-1)^{a+b} \zeta(\{2\}^a, 4, \{2\}^b) x^{2a} y^{2b} \,, \\
	S^\star(x) & \coloneqq \sum_{n=0}^\infty \zeta^\star(\{2\}^n) x^{2n} \,.
\end{align*}
Then from \cite[Equation 2.4]{LiIdentities13} (in the special case \( a_2 = z_4 \), \( a_1 = a_3 = z_2 \)) we have that 
\begin{equation}\label{eqn:gs:zs2242asz2242}
	G^\star(x,y) = G(y,x) \ast S^\star(x) \ast S^\star(y) \,.
\end{equation}
This is an identity in the stuffle algebra; in particular, it automatically lifts to a motivic identity since the stuffle product is known to be motivic (see \cite{RacinetThesis} or \cite{SouderesDblSh}).
Moreover, it is well-known (or readily verifiable by factoring the generating series as a product, see for example \cite[Equation 36]{BBBkfold}, and \cite[Equation 44]{zaghoff}) that
\[
	\sum_{n=0}^\infty \zeta^\star(\{2\}^n) x^{2n} = \frac{\pi x}{\sin(\pi x)} \,, \text{ and }
	\sum_{n=0}^\infty \zeta(\{2\}^n) x^{2n} = \frac{\sin(\ii \pi x)}{\ii \pi x} \,.
\]
By solving \autoref{eqn:gs:zs2242asz2242} for \( G(y,x) \), and extracting the coefficient of \( x^{2a} y^{2b} \), we obtain following explicit formula for \( \zeta(\{2\}^a, 4, \{2\}^b) \) in terms of similar \( \zeta^\star \) values,
\begin{equation}
\label{eqn:z2242aszs2242}
	\zeta(\{2\}^a,4,\{2\}^b) = \sum_{n=0}^a \sum_{m=0}^b (-1)^{m+n} \zeta^\star(\{2\}^m, 4, \{2\}^n) \zeta(\{2\}^{a-n}) \zeta(\{2\}^{b-m}) \,.
\end{equation}

A similar identity holds for \( 4 \) replaced by any value \( k \); these identities gives the precise version of the stuffle antipode result
\[
	\zeta^\lmot(k_1,\ldots,k_d) = (-1)^{d+1} \zeta^{\star,\lmot}(k_d,\ldots,k_1)
\]
considered in \cite[Lemma 4.2.2]{GlanoisThesis16}.  Moreover, since the stuffle algebra identity and the evaluations of \( \zeta(\{2\}^n) \), \( \zeta^\star(\{2\}^n) \) are motivic (\cite[Lemma 3.4]{BrownMTM12}, \cite[Lemma 4.4.3]{GlanoisThesis16}), \autoref{eqn:z2242aszs2242} lifts automatically to a motivic version.

\subsection{Generalised 2-1 Theorem} We now recall the generalised 2-1 Theorem, established by Zhao \cite{ZhaoIdentity16}, which evaluates each \( \zeta^\star \) value in terms of a certain alternating \( \zeta^\half \) value.  It is more convenient -- and indeed has a closer connection with the goal -- to write the generalised 2-1 Theorem in the block decomposition form given in \cite[Lemma 3.1]{CharltonCyclic20}.

Let \( \vec{s} = (s_1,\ldots,s_k) \) be a sequence of MZV arguments, and let \( \vec{B} = (\ell_1,\ldots,\ell_n) \) be the corresponding block decomposition (see \autoref{sec:blockdecomp}).  Write
\[
	\widetilde{\,x\,} = \begin{cases}
		x & \text{if \( x \) odd}, \\
 		\overline{x} & \text{if \( x \) even}.
	\end{cases}
\]  (Recall: \( \overline{x} \) denotes that the argument \( x \) in an alternating MZV has sign \( -1 \).)  Then
\[
	\zeta^\star(\vec{s}) = \varepsilon(\vec{s}) \cdot 2^n \zeta^\half(\widetilde{\ell_1 - 2}, \widetilde{\ell_2}, \ldots, \widetilde{\ell_n}) \,,
\]
where \( \varepsilon(\vec{s}) = 1 \) if \( s_1 = 1 \) and  \( \varepsilon(\vec{s}) = -1 \) if \( s_1 \geq 2 \), and if \( \ell_1-2 = 0 \), one should neglect this argument.  This follows by combining Zhao's generalised 2-1 Theorem \cite{ZhaoIdentity16} which involves a certain recursively constructed sequence of indices \( \vec{s}^{(i)} \), with the description of the final such index string \( \vec{s}^{(k)} \), given in \cite[Lemma 3.1]{CharltonCyclic20}, in terms of the block decomposition.  This final string supplies the \( \zeta^\half \) arguments in Zhao's formulation of the 2-1 Theorem. \medskip

In our case, we want to apply this to \( \zeta^\star(\{2\}^a, 4, \{2\}^b) \).  The block decomposition of \( (\{2\}^a, 4, \{2\}^b) \) is given by \( (2a+3,1,2b+2) \), and we therefore have
\[
	\zeta^\star(\{2\}^a, 4, \{2\}^b) = -2^3 \zeta^\half(2a + 1, 1, \overline{2b+2}) \,.
\]
Then expanding out, by definition of the interpolated \( \zeta^\half \), we have
\begin{equation}
\label{eqn:zs2242aszhalfdp3}
\begin{aligned}
	& \zeta^\star(\{2\}^a, 4, \{2\}^b) = \begin{aligned}[t] -2 & \zeta(\overline{2a + 2b + 4}) - 4 \zeta(2a+1, \overline{2b + 3}) \\ 
	& - 4 \zeta(2a + 2, \overline{2b + 2}) - 8 \zeta(2a + 1, 1, \overline{2b+2}) \,.
	\end{aligned}
\end{aligned}
\end{equation}
This reduces our task of evaluating \( \zeta^\star(\{2\}^a, 4, \{2\}^b) \) to understanding certain explicit depth 3 alternating MZVs.

\subsection{The parity theorem in depth 3}

The parity theorem for MZVs states roughly that an MZV of weight \( w \) and depth \( d \) can be reduced to a combination of lower depth MZVs and products, when \( w \not\equiv d \pmod*{2} \).  In particular, an MZV of depth 3 and even weight is reducible.  This claim actually also holds for alternating MZVs, via the parity theorem for multiple polylogarithms \cite{PanzerParity16}, as \( -1 \) is its own multiplicative inverse. \medskip

An explicit version of the depth 3 parity theorem is given for the multiple polylogarithm functions \( \Li_{n_1,n_2,n_3}(z_1,z_2,z_3) \) in \cite[Equation 4.3]{PanzerParity16}.  By specialising to \( z_i = \pm 1 \), we recover the claimed reduction of depth 3 alternating MZVs, for any choice of signs \( z_i \) (encoded with a `bar' over the corresponding argument \( \overline{n_i} \), if \( z_i = -1 \)), as follows.  Namely if \( \alpha + \beta + \gamma \) is even and \( \gamma \neq 1 \) (although \( \gamma = \bar{1} \) is okay), then with $\zeta(0) = \zeta(\overline{0}) = -\frac{1}{2}$ by convention, and stuffle-regularisation if necessary (see \cite{IKZ06} for the notion of regularisation, and \autoref{rem:parityreg} below for the behaviour in this case) when $\zeta(1)$ appears, we have
\begin{align*}
\zeta(\alpha,\beta,\gamma) &= \begin{aligned}[t] 
& \frac{1}{2} \zeta(\alpha) \big( \zeta(\beta,\gamma) - (-1)^{\abs{\beta}+\abs{\gamma}} \zeta(\beta,\gamma) \big) - \zeta(\beta,\alpha) \zeta(\gamma) \delta_{\text{$\abs{\gamma}$ even}} \\
&- \frac{1}{2} \zeta(\alpha \oplus \beta, \gamma) + \frac{1}{2} \zeta(\beta \oplus \gamma, \alpha) \\
&+ \sum_{\substack{2s + \nu + \mu = \abs{\smash{\alpha}} \\ s, \mu, \nu \geq 0}} (-1)^{\abs{\beta} + \abs{\gamma} + \mu+ \nu} \zeta(\sgn(\alpha\beta\gamma) \diamond 2s)\binom{-\abs{\beta}}{\mu} \binom{-\abs{\gamma}}{\nu} \zeta(\beta\oplus \mu, \gamma \oplus \nu) \\
&+ \sum_{\substack{2s + \nu + \mu = \abs{\smash{\beta}} \\ s, \mu, \nu \geq 0}} (-1)^{\gamma + \mu} \zeta(\sgn(\alpha\beta\gamma) \diamond 2s)\binom{-\abs{\gamma}}{\mu} \binom{-\abs{\alpha}}{\nu} \zeta(\gamma \oplus \mu) \zeta( \alpha \oplus \nu) \\
&+ \sum_{\substack{2s + \nu + \mu = \abs{\smash{\gamma}} \\ s, \mu,\nu \geq 0}} \zeta(\sgn(\alpha\beta\gamma) \diamond 2s) \binom{-\abs{\beta}}{\mu} \binom{-\abs{\alpha}}{\nu} \zeta(\beta \oplus \mu, \alpha \oplus \nu) \,.
\end{aligned}
\end{align*}
(To avoid abuse of notation, we define \( 1 \diamond x \coloneqq x \), and \( -1 \diamond x \coloneqq \overline{x} \), to give the corresponding decoration for signed arguments.
) \medskip

Now specialise to \( \alpha = 2a + 1, \beta = 1, \gamma = \overline{2b+2} \).  We can simplifying various binomial coefficients and powers of \( -1 \), using \( \binom{-k}{\ell} = (-1)^\ell \binom{\ell + k-1}{k-1} \), and expand out the second summation into its two non-trivial terms \( (s,\mu,\nu) = (0, 1, 0), (0, 0, 1) \).  After doing so, and inserting the result into \autoref{eqn:zs2242aszhalfdp3} we note some simplifications.  Firstly the term
\( -4\zeta(2a+2,\overline{2b+2}) \) in the \( \zeta^\half \) cancels with one from the depth 3 reduction; secondly the term \( -4\zeta(2a+1,\overline{2b+3}) \) combines with one from the depth 3 reduction to produce
\begin{align*}
	& -4\zeta(2a+1,\overline{2b+3}) - 4\zeta(\overline{2b+3},2a+1) \\
	& = -4\zeta(2a+1)\zeta(\overline{2b+3}) + 4 \zeta(\overline{2a+2b+4})
\end{align*}
Overall this produces the following evaluation for \( \zeta(\{2\}^a, 4, \{2\}^b) \), as the first main stepping stone, with stuffle-regularisation applied where necessary
\begin{equation}\label{eqn:zs2242full1}
\begin{aligned}[c]
 \zeta^\star(&\{2\}^a, 4, \{2\}^b) = \\
 & \begin{aligned}[t]
& 2 \zeta(\overline{2a + 2b + 4}) + 8 \zeta(1, 2a+1) \zeta(\overline{2b+2}) - 8 \zeta(2a + 1)\zeta(1, \overline{2b + 2}) \\
& + 4 (2b + 1) \zeta(\overline{2b +3}) \zeta(2a + 1) - 4 (2a + 1) \zeta(\overline{2b + 2}) \zeta(2a + 2) \\
& + 8 \sum_{\substack{2s + \nu + \mu = 2a+1 \\ s, \mu, \nu \geq 0}} \zeta(\overline{2s}) \binom{\nu+(2b+1)}{\nu} \zeta(1 + \mu, \overline{2b + 2 + \nu}) \\
& - 8 \sum_{\substack{2s + \nu + \mu = 2b+2 \\ s, \mu, \nu \geq 0}} \zeta(\overline{2s}) \binom{\nu + (2a)}{\nu} \zeta(1 + \mu, 2a + 1 + \nu) \,.
\end{aligned}		
\end{aligned}
\end{equation}

\begin{remark}[Independence of regularisation]\label{rem:parityreg}
Let us note here that the shuffle-regularised and stuffle-regularised versions of this formula agree (and indeed also for the depth 3 reduction, with \( c \neq 1 \)), and are independent of the regularisation parameter; we may therefore switch to the shuffle-regularisation at \( T = 0 \) for later convenience.  This is expected since we are reducing a convergent triple zeta value.  

More precisely, this is because terms with a single trailing 1 are equal under either regularisation, and in the case \( a = 0 \), the single term  \( 8\zeta(\overline{2b+1})\zeta(1,1) \) with two trailing 1's, which arises arising from \( (s,\nu,\mu) = (b+1, 0, 0) \) in the last sum, cancels with the corresponding term on the second line.  Otherwise when \( a = 1 \), the regularisation parameter \( T \) in terms arising from \( \nu = 0 \) in the last sum (with \( \zeta^{\ast,T} \) explicitly denoting the stuffle-regularised version with \( \zeta^{\ast,T} (1) = T \)),
\[
	\sum_{\substack{2s + \mu = 2b + 2 \\ s \geq 0,\mu > 0}} \zeta(\overline{2s}) \zeta^{\ast,T}(1 + \mu, 1)
	= \sum_{\substack{2s + \mu = 2b + 2 \\ s \geq 0,\mu > 0}} \zeta(\overline{2s}) \big( T \zeta(1 + \mu) - \zeta(1, 1+\mu) - \zeta(2+\mu) \big)
\]
can be seen to cancel with that arising from the terms
\begin{align*}
	& 8 \zeta^{\ast,T}(1) \zeta(1,\overline{2b+2}) + 4 (2b+1) \zeta(\overline{2b+3}) \zeta^{\ast,T}(1)  \\
	&= 8 T \zeta(1, \overline{2b+2}) + 4 (2b+1) \zeta(\overline{2b+3}) T
\end{align*}
In fact this cancellation is equivalent to the following reduction which follows from the depth-parity theorem in depth 2 (see \cite[Equation 3.5]{PanzerParity16}):
\begin{equation}
\label{eqn:red:zeta1xbar}
	\zeta(1, \overline{2b+2}) = -\sum_{\substack{2s+k = 2b + 3 \\ s \geq 0, k \geq 2}} \zeta(\overline{2s}) \zeta(k) + \frac{2b+1}{2} \zeta(\overline{2b+3}) \,.
\end{equation}
\end{remark}

\subsection{Shuffle-regularisation and dihedral symmetries}	Now let us take advantage of the shuffle-regularisation in earnest.  Define \( \zeta_\ell(k_1,\ldots,k_d) \) is by inserting \( \ell \) leading 0's at the start of the iterated integral representation of \( \zeta(k_1,\ldots,k_d) \) given in \autoref{eqn:altmzv:int} (see also \autoref{eqn:zasint} and \autoref{eqn:altmzv:mot}), and write \( \zeta_\ell^{\shuffle,T=0}(k_1,\ldots,k_d) \) for the shuffle regularisation thereof (see \autoref{rem:regularisation} above), with \( \zeta^{\shuffle,T=0}(1) = 0 \).  Then we have the regularisation formula (see for example \cite[\S{}5.1 R2]{BrownDecomposition12}, and the obvious generalisation to alternating MZVs in \cite[Equation 2.28]{GlanoisThesis16})
\[
	\zeta_\ell^{\shuffle,T=0}(k_1,\ldots,k_d) = (-1)^\ell \! \sum_{i_1 + \cdots + i_d = \ell} \! \binom{k_1 + i_1 - 1}{i_1} \cdots \binom{k_r + i_r - 1}{i_r} \zeta(k_1 \oplus i_1, \ldots, k_r \oplus i_r) \,.
\]
In particular, using this we can write
\begin{align*}
	 & \sum_{\substack{\nu + \mu = 2a+1 - 2s \\ \mu, \nu \geq 0}} \binom{\nu+(2b+1)}{\nu} \zeta(1 + \mu, \overline{2b + 2 + \nu}) = - \zeta_{2a+1-2s}^{\shuffle,T=0}(1, \overline{2b+2}) \\
	& \sum_{\substack{\nu + \mu = 2b+2 - 2s \\ \mu, \nu \geq 0}} \binom{\nu + (2a)}{\nu} \zeta(1 + \mu, 2a + 1 + \nu) = \zeta_{2b+2-2s}^{\shuffle,T=0}(1, 2a+1) \,.
\end{align*}
Substituting these and \autoref{eqn:red:zeta1xbar} into \autoref{eqn:zs2242full1} gives us the following
\begin{equation*}
\begin{aligned}[c]
\zeta^\star(\{2\}^a, 4, \{2\}^b) = {}
 \begin{aligned}[t]
& 2 \zeta(\overline{2a + 2b + 4}) 
+ 8 \zeta(1, 2a+1) \zeta(\overline{2b+2}) 
 -4 (2a+1) \zeta(\overline{2b+2}) \zeta(2a+2) \\
&  + 8 \zeta(2a+1) \sum_{k=1}^{b+1} \zeta(2k+1) \zeta(\overline{2b+2-2k}) \\[-1ex]
&  - 8 \sum_{s = 0}^{a} \zeta(\overline{2s}) \zeta^{\shuffle,T=0}_{2a+1-2s}(1, \overline{2b+2}) - 8 \sum_{s = 0}^{b+1} \zeta(\overline{2s}) \zeta^{\shuffle,T=0}_{2b+2-2s}(1, 2a+1) \,.
\end{aligned}		
\end{aligned}
\end{equation*}

Let us finally note that the term \( 8\zeta(1,2a+1) \zeta(\overline{2b+2}) \) cancels with the \( s = b+1 \) term of the last sum.  In particular we obtain the second stepping stone in our quest to evaluate \( \zeta^{\star}(\{2\}^a, 4, \{2\}^b) \).  Namely
\begin{equation}\label{eqn:zs2242full2}
\begin{aligned}[c]
\zeta^\star(\{2\}^a, 4, \{2\}^b) =  {}
& \begin{aligned}[t]
& 2 \zeta(\overline{2a + 2b + 4}) 
 -4 (2a+1) \zeta(\overline{2b+2}) \zeta(2a+2) 
 \\
 & + 8 \zeta(2a+1) \sum_{k=1}^{b+1} \zeta(2k+1) \zeta(\overline{2b+2-2k}) \\[-1ex]
&  - 8 \sum_{s = 0}^{a} \zeta(\overline{2s}) \zeta^{\shuffle,T=0}_{2a+1-2s}(1, \overline{2b+2}) - 8 \sum_{s = 0}^{b} \zeta(\overline{2s}) \zeta^{\shuffle,T=0}_{2b+2-2s}(1, 2a+1) \,.
\end{aligned}		
\end{aligned}
\end{equation}

Henceforth we shall always use the shuffle regularisation with \( \zeta^{\shuffle,T=0}(1) = 0 \), and will therefore drop the extraneous \( \bullet^{\shuffle,T=0} \) from our notation.  This regularisation is consistent with the regularisation normally used in the motivic framework (c.f. \autoref{rem:regularisation}).  Moreover, we will check in \autoref{sec:zs2242dp3:mot} that this reduction is indeed motivic.\medskip

Now we recall from \cite[Corollary 4.2.6]{GlanoisThesis16} that depth \( p \) alternating zeta star values satisfy a dihedral symmetry of order \( p + 1 \), modulo products and lower depth.  (More precisely therein, this symmetry is phrased in terms of so-called multiple zeta star-star values, which incorporate the lower depth terms, making the symmetry hold already modulo products.)  In particular, in our case we claim that
\begin{align*}
 	\zeta_{2k-1}(1,\overline{2\ell}) &\equiv \zeta(\overline{2\ell},\overline{2k}) \pmod{\text{products}} \\
	\zeta_{2k}(1,2\ell+1) &\equiv \zeta(2\ell+1,2k+1) \pmod{\text{products}} \,.
\end{align*}
(The depth 1 terms in these cases are reducible, as the weight is even.)

We will not actually use the implicit form of the dihedral symmetry established by Glanois which immediately produces the above; instead, guided by the Glanois' proof, we will establish an exact version in this depth 2 case.  However, let us point out some technical issue which apparently occurs when considering the octagon relation, in an attempt to derive a so-called \emph{hybrid relation} (Theorem 4.2.3 in \cite{GlanoisThesis16}), a key part of the proof of the dihedral symmetry.

\begin{remark}[Regularisation in the octagon relation]
	The octagon relation for level \( N = 2 \) multiple zeta values (i.e. alternating MZVs) is obtained by integrating a word in \( e_0, e_1, e_{-1} \) around the following closed loop.
		\begin{center}
		\begin{tikzpicture}
			[point/.style={circle,fill=black,inner sep=0pt,minimum size=1mm}]
			\node (start) at (-7,0)  {};
			\node[point] (xm1) at (-6,0)  [label=below:$-1$]{};
			\node[point] (x0) at (-2,0)  [label=below:$0$]{};
			\node[point] (x1) at (2,0) [label=below:$1$] {};
			\node[point] (xinf) at (6,0) [label=below:$\infty$] {};
			\node (end) at (7,0) [label=above:$\gamma$] {};
			\draw [->,thick] (start) -- ($(xm1) - (4mm,0)$) arc [start angle = 180, end angle = 0, radius = 4mm];
			\draw [->,thick] ($(xm1) + (4mm,0)$) -- ($(x0) - (4mm,0)$) arc [start angle = 180, end angle = 0, radius = 4mm];
			\draw [->,thick] ($(x0) + (4mm,0)$) -- ($(x1) - (4mm,0)$) arc [start angle = 180, end angle = 0, radius = 4mm];
			\draw [->,thick] ($(x1) + (4mm,0)$) -- ($(xinf) - (4mm,0)$) arc [start angle = 180, end angle = 0, radius = 4mm];
			\draw [->,thick] ($(xinf) + (4mm,0)$) -- (end);
		\end{tikzpicture}
	\end{center} 
	However, one must take into account the tangential base points, and how they are transformed under the M\"obius transformation which cyclically maps the segments \( (0,1) \mapsto (-1,0) \mapsto (\infty,-1) \mapsto (1,\infty) \).  More precisely, the M\"obius transformation
	\[		
		f(z) = \frac{z-1}{z+1}
	\]
	transforms the segments as indicated, and therefore the integral \( \int_{(-1,0)} \) is related to the integral \( \int_{(0,1)} \) via a suitable pullback.  However, note that the straight line path
	\begin{align*}
		\mathrm{dch} \colon [0,1] &\to [0,1] \\
			t &\mapsto t
	\end{align*}
	with tangential base points \( \overrightarrow{0}_{1} \) and \( \overrightarrow{1}_{-1} \) 
	is transformed into the path 
	\begin{align*}
		(f^\ast \mathrm{dch})(t) \colon [0,1] &\to [-1,0] \\
			t &\mapsto \frac{t-1}{t+1}
	\end{align*}
	with tangential base points \( \overrightarrow{-1}_{2} \) and \( \overrightarrow{0}_{-\frac{1}{2}} \).  In particular, the semicircular integrals evaluate in the following way
	\[
		\int_{\overrightarrow{0}_{-\frac{1}{2}}}^{\overrightarrow{0}_1} \frac{\mathrm{d}t}{t} = \log(2) - \ii \pi \,.
	\]
	So the octagon relation actually takes the form
	\begin{align*}
		e^{\big(-\frac{\mathbb{L}^\mot}{2} + \log^\mot(2)\big) e_{-1}} &
		\Phi^\mot(e_0, e_{-1}, e_1)
		e^{\big(-\frac{\mathbb{L}^\mot}{2} + \log^\mot(2)\big) e_{0}}
		\Phi^\mot(e_0, e_{1}, e_{-1}) \\
		&
		\cdot  e^{\big(-\frac{\mathbb{L}^\mot}{2} + \log^\mot(2)\big) e_{1}}
		\Phi^\mot(e_\infty, e_{1}, e_{-1})
		e^{\big(-\frac{\mathbb{L}^\mot}{2} + \log^\mot(2)\big) e_{\infty}}
		\Phi^\mot(e_\infty, e_{-1}, e_1) = 1 \,,
	\end{align*}
	where \( e_\infty \) is defined such that \( e_0 + e_1 + e_\infty + e_{-1} = 0 \).
	
	This change should  not render Glanois' hybrid identity invalid, as the derivation of the hybrid identity mainly requires the octagon relation modulo products, and these additional terms largely cancel out. 
\end{remark}

We now turn to the derivation of the exact identities which verify our earlier claim that 
\begin{align*}
\zeta_{2k-1}(1,\overline{2\ell}) &\equiv \zeta(\overline{2\ell},\overline{2k}) \pmod{\text{products}} \\
\zeta_{2k}(1,2\ell+1) &\equiv \zeta(2\ell+1,2k+1) \pmod{\text{products}} \,.
\end{align*}
We treat the first, as the second is exactly analogous; we will nevertheless give the full identity in each case.  Firstly, apply shuffle regularisation to
\[
	\zeta_{z-1}(\overline{\alpha}, \beta) + \zeta_{z-1}(\beta, \overline{\alpha}) \,,
\]
to obtain
\[
	= (-1)^{z-1} \sum_{i+j = z-1} \binom{i+\alpha-1}{i} \binom{j+\beta-1}{j} \big\{ \zeta(\overline{i+\alpha}, j+\beta) + \zeta(j+\beta, \overline{i+\alpha}) \big\} \,.
\]
(Note, we have combined the two original sums by switching \( i \leftrightarrow j \) in the second sum.)  By the stuffle product (switching to stuffle regularisation is okay, as there is at most a single trailing 1), we have
\[
= (-1)^{z-1} \sum_{i+j = z-1} \binom{i+\alpha-1}{i} \binom{j+\beta-1}{j} \big\{ \zeta(j+\beta)\zeta(\overline{i+\alpha})  - \zeta(\overline{\alpha+\beta+z-1}) \big\} \,.
\]
On the other hand, apply the shuffle antipode \cite[Equation 29]{GoncharovGalois01} \[
	\begin{aligned}[t]
	& (-1)^N I(a; x_N, \ldots, x_1; b) + I(a; x_1,\ldots,x_N; b) \\
	& + \sum_{i=1}^{N-1} (-1)^{N-i} I(a; x_1, \ldots, x_i; b) I(a; x_{N}, \ldots, x_{i+1}; b) = 0
	\end{aligned}
\]
which effectively reverses the differential forms in an iterated integral \( I(a, x_1,\ldots,x_N; b) \), modulo explicit products terms, to
\[
	\zeta_{z-1}(\overline{\alpha}, \beta) = I(0; \{0\}^{z-1}, -1, \{0\}^{\alpha-1}, 1, \{0\}^{\beta-1}; 1)
\]
and we find
\begin{align*}
	& \zeta_{z-1}(\overline{\alpha}, \beta) + (-1)^{z-1+\alpha+\beta} \zeta_{\beta-1}(\overline{\alpha}, \overline{z}) \\
	& = \begin{aligned}[t] 
	& I(0; \{0\}^{z-1}, -1, \{0\}^{\alpha-1}, 1, \{0\}^{\beta-1}; 1) \\
	& \quad + (-1)^{z-1+\alpha+\beta} I(0; \{0\}^{\beta-1}, 1, \{0\}^{\alpha-1}, -1, \{0\}^{z-1} ; 1) \end{aligned}  \\
	&= -\sum_{i=0}^{\alpha-1} (-1)^{z-1 + \alpha + i} \binom{z+i-1}{i} \binom{\alpha+\beta-2-i}{\beta-1} \zeta(\overline{z+i}) \zeta(\alpha+\beta-1-i) \,.
\end{align*}
Finally, take the difference of these two identities, and set \( \alpha = 2\ell, \beta = 1, z = 2k \), we then obtain the dihedral symmetry we claimed
\begin{align*}
	& \zeta_{2k-1}(1, \overline{2\ell}) - \zeta(\overline{2\ell}, \overline{2k}) = \begin{aligned}[t]
	 -\sum_{i=0}^{2k-1} &  \binom{i+2\ell-1}{i} \big\{ \zeta(\overline{i+2\ell}) \zeta(2k-i) - \zeta(\overline{2k+2\ell}) \big\} \\
	& - \sum_{i=0}^{2\ell-1} (-1)^i \binom{2k+i-1}{i} \zeta(\overline{2k+i}) \zeta(2\ell-i) \end{aligned}
\end{align*}
A slightly more concise version of this is obtained by extending the sums to negative indices -- where the binomial coefficient vanish -- in order to combine them into one, and explicitly summing the coefficient of \( \zeta(\overline{2k+2\ell}) \).  This puts the identity in a form closer to that which one could directly check/derive with the motivic derivations, namely
	\begin{equation}
	\label{eqn:dih:1and2lbar}
\begin{aligned}
 \zeta_{2k-1}(1, \overline{2\ell}) - {} & \zeta(\overline{2\ell}, \overline{2k}) = \binom{2k+2\ell-1}{2k-1} \zeta(\overline{2k+2\ell}) \\ & - \sum_{\substack{r = 1}}^{2k+2\ell-2} \bigg( (-1)^r \binom{r-1}{2k-1} + \binom{r-1}{2\ell-1}  \! \bigg) \zeta(\overline{r}) \zeta(2k+2\ell-r) \,. \end{aligned}
\end{equation}

In an analogous way, we find the explicit form of the dihedral symmetry in the other case to be
\begin{equation}
\label{eqn:dih:1and2lp1}
\begin{aligned}
 \zeta_{2k}(1, 2\ell+1) - {} & \zeta(2\ell+1,2k+1) = {} 
- \zeta(2) \delta_{k=\ell=0} - \binom{2k+2\ell+1}{2\ell+1} \zeta(2k+2\ell+2) \\
& + \sum_{\substack{r = 1}}^{2k+2\ell} \bigg( (-1)^r \binom{r-1}{2k} + \binom{r-1}{2\ell} \! \bigg) \zeta(r) \zeta(2k+2\ell+2-r) \,. \end{aligned}
\end{equation}
(Here the term \( \delta_{k=\ell=0} \) accounts for the difference in shuffle- and stuffle-regularisation in the case \( \zeta(1,1) \).)

Both of these identities are easily verified to be motivic, either by direct calculation via \( \D_{2r+1} \), or by noting that the ingredients -- namely the shuffle and stuffle products, and the regularisation \( \zeta^{\shuffle,0} \) -- are themselves motivic in nature.

\subsection{Generalised doubling identity} \label{sec:gendouble}
The final ingredient we require for our evaluation is one of the so-called \emph{generalised doubling identities}, as described in \cite[\S4]{MZVDM}, and \cite[Section 14.2.5]{ZhaoBook} (be aware these references use the opposite MZV convention).

In depth 2, the relevant relation is already given explicitly by Zhao, and states (with either shuffle, or stuffle regularisation) that\clearpage
\begin{align*}
	& \zeta(s,t) + \zeta(\overline{s},\overline{t}) = \\
	& \sum_{i=1}^s \binom{s+t-i-1}{t-1} 2^{1+i-s-t} \zeta(i, s+t-i) 
	+ \sum_{i=1}^t \binom{s+t-i-1}{s-1} 2^{1+i-s-t} \zeta(s+t-i,i) \\
	& - \sum_{i=1}^t \binom{s+t-i-1}{s-1} \big\{ \zeta(s+t-i,i) + \zeta(\overline{s+t-i},i) \big\} - \binom{s+t-1}{s} 2^{1-s-t} \zeta(s+t)
\end{align*}
We then flip \( \zeta(\overline{a},b) \) to \( \zeta(b,\overline{a}) \) using the stuffle product, rewrite the double zeta sums that lack powers of 2 using the shuffle regularisation as before, and simplify the resulting coefficient of \( \zeta(s+t) \).  (The power of 2 does indeed just disappear!)  This gives the equivalent identity
\begin{align*}
	& \zeta(\overline{s},\overline{t}) + (-1)^t \zeta_{t-1}(1,\overline{s}) = \\
	& \sum_{i=1}^s \binom{s+t-i-1}{t-1} 2^{1+i-s-t} \zeta(i, s+t-i) + \sum_{i=1}^t \binom{s+t-i-1}{s-1} 2^{1+i-s-t} \zeta(s+t-i, i) \\
	& - \zeta(s,t) + (-1)^t \zeta_{t-1}(s,1) - \sum_{i=1}^t \binom{s+t-i-1}{s-1} \zeta(\overline{s+t-i})\zeta(i)
	- \binom{s+t-1}{s} \zeta(s+t) \,.
\end{align*}
Finally we note that upon substituting \( t = 2k, s = 2\ell \), we can solve this identity simultaneously with \autoref{eqn:dih:1and2lbar} to obtain expressions for both \( \zeta(\overline{2\ell},\overline{2k}) \) and \( \zeta_{2k-1}(1,\overline{2\ell}) \) individually as combinations of classical depth 2 MZVs and products.  In particular, we have established the following proposition, (after substituting an expression for \( (-1)^t \zeta_{t-1}(s,1) = \zeta_{2k-1}(2\ell,1) \equiv \zeta(2k,2\ell) \pmod*{\text{products}} \) using the shuffle antipode, or via a further dihedral symmetry, and simplifying).

\propgaldescent

Moreover, by a direct calculation, since \( \D_{2r+1} \) is a tensor product of single-zeta values in this case, we see \autoref{prop:zevbarevbar:eval} (and the generalised doubling identity itself) lifts to the motivic level. This is checked in detail in \autoref{sec:mot_doubling}.

\begin{remark}
	It is clear from the generalised 2-1 Theorem that \( \zeta(\overline{2\ell},\overline{2k}) \) descends to a combination of classical MZVs; we can in fact easily give an explicit formula
	\[
		\zeta(\overline{2\ell},\overline{2k}) = \frac{1}{4} \zeta^\star(1,\{2\}^{\ell-1}, 3, \{2\}^{k-1}) - \frac{1}{2} \zeta(2k+2\ell) \,.
	\]
	However, it is certainly not clear from this expression that depth 2 classical MZVs suffice, and so this would not help us in evaluating \( \zeta(\{2\}^a, 4, \{2\}^b) \) in any useful manner.  However, we do obtain an evaluation for \( \zeta^\star(1,\{2\}^{\ell-1}, 3, \{2\}^{k-1}) \) by substituting \autoref{prop:zevbarevbar:eval} into the above.
	
	Moreover, since \( \zeta(1,1,4,6) \) is -- according to the Data Mine \cite{MZVDM} -- a combination of depth 2 alternating MZVs and products
	\[
	\zeta (1,1,4,6) = \begin{aligned}[t] 
	& \frac{64}{9} \zeta (\overline{3},\overline{9})
	+\frac{371}{144}  \zeta (3,9)
	+3 \zeta (2) \zeta (3,7)
	+\frac{3}{2} \zeta (4) \zeta (3,5) 
	-\frac{3131}{144} \zeta (9) \zeta (3)
	\\
	&+\frac{107}{24}  \zeta (5) \zeta (7)
	+10  \zeta (2) \zeta (7) \zeta (3)
	+\frac{7}{2} \zeta (2) \zeta (5)^2
	-\frac{1}{2} \zeta (4) \zeta (5) \zeta (3)
	\\
	&-\frac{9}{4} \zeta (6) \zeta (3)^2
	+\frac{\zeta (3)^4}{12}
	-\frac{117713 }{132672}\zeta (12) \,,
	\end{aligned}
	\]
	but apparently irreducible as a classical MZV, one cannot in general expect the Galois descent to always respect the depth.  However, as pointed out in \autoref{rem:galoisdepth}, one has -- assuming the homological version of the Broadhurst-Kreimer Conjecture \cite{BroadhurstKConj96} (see also \autoref{conj:depthzetadim} above) -- that the depth of an alternating MZV after Galois descent should be at most twice the original; here the Galois descent of \( \zeta(\overline{3},\overline{9}) \) involving classical MZVs up to depth 4 corroborates this.
\end{remark}

By substituting \autoref{prop:zevbarevbar:eval} into \autoref{eqn:dih:1and2lbar}, and this result into \autoref{eqn:zs2242full2}, we establish that \( \zeta^\star(\{2\}^a, 4, \{2\}^b) \) and (via \autoref{eqn:z2242aszs2242}) that \( \zeta(\{2\}^a, 4, \{2\}^b) \) are both expressible in terms of only classical double zeta values.

\begin{thm}[Non-explicit form] 
	Both \( \zeta^\star(\{2\}^a, 4, \{2\}^b) \) and \( \zeta(\{2\}^a, 4, \{2\}^b) \) are expressible in terms of classical double zeta values.
\end{thm}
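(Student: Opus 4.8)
The plan is to assemble the chain of reductions developed above into one substitution. The natural entry point is the stuffle-antipode identity \autoref{eqn:z2242aszs2242}, which expresses $\zeta(\{2\}^a,4,\{2\}^b)$ as a finite $\Q$-linear combination of products $\zeta^\star(\{2\}^m,4,\{2\}^n)\,\zeta(\{2\}^{a-n})\,\zeta(\{2\}^{b-m})$. Since each $\zeta(\{2\}^j)$ is a rational multiple of $\zeta(2)^j$ (equivalently of $\pi^{2j}$), it suffices to treat the star values $\zeta^\star(\{2\}^m,4,\{2\}^n)$; once these are written via classical double zeta values, the displayed products are again expressible through classical double zeta values together with products of single zeta values. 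So the whole problem reduces to evaluating $\zeta^\star(\{2\}^a,4,\{2\}^b)$ itself.

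For that, I would take \autoref{eqn:zs2242full2} as the base formula — obtained above by running the generalised $2$-$1$ Theorem in block-decomposition form to get $\zeta^\star(\{2\}^a,4,\{2\}^b) = -8\,\zeta^\half(2a+1,1,\overline{2b+2})$, expanding the interpolated $\zeta^\half$ into depth $\leq 3$ alternating MZVs, applying the depth $3$ parity theorem to the single genuinely depth-$3$ piece $\zeta(2a+1,1,\overline{2b+2})$, and repackaging the resulting depth $2$ terms as shuffle-regularised $\zeta_\ell^{\shuffle,0}(1,\overline{2b+2})$ and $\zeta_\ell^{\shuffle,0}(1,2a+1)$. In \autoref{eqn:zs2242full2} every summand is then of one of four types: a rational multiple of a single (alternating, hence rational multiple of classical) zeta value; a product of two single zeta values; a term $\zeta(\overline{2s})\,\zeta_{2a+1-2s}^{\shuffle,0}(1,\overline{2b+2})$, which matches the shape $\zeta_{2k-1}(1,\overline{2\ell})$ after setting $k=a-s+1$, $\ell=b+1$; or a term $\zeta(\overline{2s})\,\zeta_{2b+2-2s}^{\shuffle,0}(1,2a+1)$, of the shape $\zeta_{2k}(1,2\ell+1)$ with $k=b-s+1$, $\ell=a$.

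It then remains to descend the two families $\zeta_{2k-1}(1,\overline{2\ell})$ and $\zeta_{2k}(1,2\ell+1)$. For the first I would invoke the exact dihedral symmetry \autoref{eqn:dih:1and2lbar} together with \autoref{prop:zevbarevbar:eval}, which already supplies the explicit Galois descent of $\zeta(\overline{2\ell},\overline{2k})$ to classical depth $2$ MZVs; substituting the latter into the former expresses $\zeta_{2k-1}(1,\overline{2\ell})$ itself through classical double zeta values and products of single zetas. For the second family, \autoref{eqn:dih:1and2lp1} already expresses $\zeta_{2k}(1,2\ell+1)$ as the classical double zeta $\zeta(2\ell+1,2k+1)$ plus an explicit combination of products of single zeta values, no descent being needed since no bars occur. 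Feeding all of this back into \autoref{eqn:zs2242full2} gives the evaluation of $\zeta^\star(\{2\}^a,4,\{2\}^b)$, and then \autoref{eqn:z2242aszs2242} yields the one for $\zeta(\{2\}^a,4,\{2\}^b)$. The one subtlety to monitor throughout is regularisation consistency — one works uniformly with the shuffle regularisation at $T=0$ and checks, as in \autoref{rem:parityreg}, that the terms which would otherwise be regularisation-dependent cancel in pairs — but this has already been verified in the preceding subsections. The genuinely non-obvious ingredient is \autoref{prop:zevbarevbar:eval} itself: the fact that combining the depth $2$ dihedral symmetry with the generalised doubling identity of \cite[Section 14.2.5]{ZhaoBook} collapses the alternating double zetas all the way down to depth $2$, rather than to the higher-depth classical MZVs the generalised $2$-$1$ Theorem would otherwise force. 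Granting that result, the remainder of the argument is routine assembly and bookkeeping.
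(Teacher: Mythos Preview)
Your proposal is correct and follows essentially the same route as the paper: you substitute \autoref{prop:zevbarevbar:eval} into \autoref{eqn:dih:1and2lbar} (and invoke \autoref{eqn:dih:1and2lp1} for the non-alternating family) to descend the regularised depth~2 terms in \autoref{eqn:zs2242full2}, then pass from $\zeta^\star$ to $\zeta$ via \autoref{eqn:z2242aszs2242}. If anything, you are more explicit than the paper's one-line proof, which omits mention of \autoref{eqn:dih:1and2lp1} for the $\zeta_{2k}(1,2\ell+1)$ terms.
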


\subsection{Generating series} In order to extract an explicit useable identity for \( \zeta(\{2\}^a, 4, \{2\}^b) \) we will convert everything to a generating series identity as a route to simplifying the resulting combinations.

Let us introduce the following generating series, whose names originate from Zagier's evaluation of \( \zeta(\{2\}^a, 3, \{2\}^b) \) \cite{zaghoff}, and some related generating series of even zeta values.  The generating series of odd MZVs, and alternating odd MZV' s are given by
\begin{align*}
A(z) &= \sum_{r=1}^\infty \zeta(2r+1) z^{2r}  =  \psi(1) - \frac{1}{2} \psi(1 - z) - \frac{1}{2} \psi(1+z)\,, \\
B(z) &= \sum_{r=1}^\infty (-\zeta(\overline{2r+1})) z^{2r} = \sum_{r=1}^\infty (1 - 2^{-2r}) \zeta(2r+1) z^{2r} = A(z) - A(\tfrac{z}{2}) \,,
\end{align*}
where \( \psi(x) = \frac{\mathrm{d}}{\mathrm{d}x} \log \Gamma(x) \) is the digamma function, the logarithmic derivative of the Gamma function.
(We keep with the choice established by Zagier taking negatives in the series for alternating MZVs.)  The generating series for even versions, using the convention that \( \zeta(0) = \zeta(\overline{0}) = -\frac{1}{2} \), are given by
\begin{alignat*}{4}
	E(z) &= \sum_{r=0}^\infty \zeta(2r) z^{2r-1} = -\frac{\pi}{2} \cot(\pi z) \,, &\quad
	F(z) &= \sum_{r=0}^\infty (-\zeta(\overline{2r})) z^{2r-1} = \frac{\pi}{2} \csc(\pi z) \,.
	\\
	\widetilde{E}(z) &= \sum_{r=1}^\infty \zeta(2r) z^{2r-1} = \frac{1}{2x}-\frac{\pi}{2} \cot(\pi z) \,, &\quad
	\widetilde{F}(z) &= \sum_{r=0}^\infty (-\zeta(\overline{2r})) z^{2r-1} = -\frac{1}{2x} + \frac{\pi}{2} \csc(\pi z) \,.
\end{alignat*}
The \( \widetilde{\bullet} \) versions which are missing the polar term will be convenient later.
Let us introduce the following double zeta generating series
\begin{alignat*}{4}
	D_\ev(x,y) &= \sum_{a,b=1}^\infty \zeta(2a, 2b) x^{2a-1} y^{2b-1} \,, &\quad&
	D_\od(x,y) &= \sum_{a,b=1}^\infty \zeta(2a + 1, 2b + 1) x^{2a} y^{2b} \,, \\
	D(x,y) &= \mathrlap{
		\sum_{\substack{a,b=2 \\ a \equiv b \pmod*{2}}}^\infty \zeta(a,b) x^{a-1} y^{b-1} = D_\od(x,y) + D_\ev(x,y)\,.
	}
\end{alignat*}
As an intermediate step, let us also introduce the following generating series to capture the shuffle-regularised zetas appearing \autoref{eqn:zs2242full2}, and in the dihedral symmetries \autoref{eqn:dih:1and2lbar} and \autoref{eqn:dih:1and2lp1}, as well as the alternating zeta values as part of the Galois descent result.
\begin{alignat*}{4}
	K_\alt(x,y) &= \sum_{a,b=1}^\infty \zeta(\overline{2a},\overline{2b}) x^{2a-1} y^{2b-1} \,, & \quad &
	K_\ev(x,y) &= \sum_{a,b=1}^\infty \zeta_{2b-1}(1,\overline{2a}) x^{2a-1} y^{2b-1} \,, \\
	K_\od(x,y) &= \sum_{a,b=0}^\infty \zeta_{2b}(1,2a + 1) x^{2a} y^{2b}\,.
\end{alignat*}
Note that we sum from \( a,b = 0 \) in \( K_\od \), but will restrict this to start from \( a,b = 1 \) in \( D_\od \), on account of the well known reductions of \( \zeta(1, 2b+1) \), and \( \zeta(2a+1, 1) \). \medskip

\paragraph{\bf Generating series for \autoref{eqn:zs2242full2}.} To obtain the generating series \( G^\star(x,y) \), we sum the left hand side of \autoref{eqn:zs2242full2} weighted by \( x^{2a} y^{2b} \) over all \( a, b \geq 0 \).

We find then that the generating series of the first term on the right hand side is
\begin{align*}
	\sum_{a,b=0}^\infty \zeta(\overline{2a+2b+4}) x^{2a} y^{2b} &= \sum_{r=0}^\infty \sum_{s=0}^r \zeta(\overline{2r+4}) x^{2r-2s} y^{2s} \\
	&= \sum_{r=0}^\infty \zeta(\overline{2r+4}) \sum_{s=0}^r x^{2r-2s} y^{2s} \\
	&= \sum_{r=0}^\infty \zeta(\overline{2r+4}) \cdot \frac{x^{2r+2} - y^{2r+2}}{x^2 - y^2} \\
	&= - \frac{y \widetilde{F}(x) - x \widetilde{F}(y)}{x y(x^2 - y^2)} \,.
\end{align*}

Likewise, the second leads to
\begin{align*}
	\sum_{a,b=0}^\infty (2a+1) \zeta(2a+2) \zeta(\overline{2b+2}) x^{2a} y^{2b} 
	&= \sum_{a=0}^\infty (2a+1) \zeta(2a+2) x^{2a} \cdot \sum_{b=0}^\infty \zeta(\overline{2b+2}) y^{2b} \\
	&= \frac{1}{y} \frac{\mathrm{d} \widetilde{E}(x) }{\mathrm{d}x} \cdot  \widetilde{F}(y)\,.
\end{align*}
The third and fourth terms are readily summed to give
\begin{align*}
	& \sum_{a,b=0}^\infty \zeta(2a+1) \cdot \sum_{k=1}^{b+1} \zeta(2k+1) \zeta(\overline{2b+2-2k}) x^{2a} y^{2b} 
	= -\frac{1}{y} A(x) A(y) F(y)  \\
	& \sum_{a,b=0}^\infty \sum_{s=0}^a \zeta(\overline{2s}) \zeta_{2a+1-2s}(1, \overline{2b+2}) x^{2a} y^{2b} = - \frac{1}{y} F(x) K_\ev(y,x)\,.
\end{align*}
The last term requires a little care, as the summand \( \zeta_{0}(1,2a+1) \) does not appear, and so must be discounted via \( K_\od(x,0) \), the constant-in-\( y \) term of \( K_\od(x,y) \).  That is 
\begin{align*}
	& \sum_{a,b=0}^\infty \sum_{s=0}^b \zeta(\overline{2s}) \zeta_{2b+2-2s}(1, 2a+1) x^{2a} y^{2b} = -\frac{1}{y} F(y) ( K_\od(x,y) - K_\od(x,0) ) \,.
\end{align*}

This gives us
\begin{align*}
	G^\star(x,y) = 
	\begin{aligned}[t]
	& \frac{8}{y} A(x) A(y) F(y) + \frac{8}{y} F(x) K_\ev(y,x) + \frac{8}{y} F(y) (K_\od(x,y) - K_\od(x,0)) \\
& - 2 \cdot \frac{y \widetilde{F}(x) - x \widetilde{F}(y)}{x y(x^2 - y^2)}	+ \frac{4}{y} \frac{\mathrm{d} \widetilde{E}(x) }{\mathrm{d}x} \cdot  \widetilde{F}(y)\,.
	\end{aligned}
\end{align*}

\paragraph{\bf Generating series for dihedral identities:}

The generating series of the form given by summing \(
	\sum_{k,\ell=1}^\infty (\bullet) y^{2k-1} x^{2\ell-1}
\)
(note the order of the variables), over the left hand side of \autoref{eqn:dih:1and2lbar} is simply just
\(
	K_\ev(x,y) - K_\alt(x,y) 
\).
Then
\begin{align*}
	\sum_{k,\ell=1}^\infty \binom{2k+2\ell-1}{2k-1} \zeta(\overline{2k+2\ell}) y^{2k-1} x^{2\ell-1} &=  \sum_{r=2}^\infty \zeta(\overline{2r}) \sum_{k=1}^{r-1} \binom{2r-1}{2k-1} y^{2k-1} x^{2r-2k-1} \\
	&= \sum_{r=2}^\infty \zeta(\overline{2r}) \Big\{ {-}y^{2r} + \frac{y}{2} (y-x)^{2r-1} + \frac{y}{2} (y+x)^{2r-1} \Big\} \\
	&= -\frac{1}{2x} \Big( \widetilde{F}(y-x) + \widetilde{F}(y+x) - 2 \widetilde{F}(y)  \Big)\,.
\end{align*}
Similarly
\begin{align*}
	& \sum_{k,\ell=1}^\infty \sum_{r=1}^{2k+2\ell-2} (-1)^r \binom{r-1}{2k-1} \zeta(\overline{r})\zeta(2k+2\ell-r) \cdot y^{2k-1} x^{2\ell-1} \\
	& {} = \sum_{k,\ell=1}^\infty \begin{aligned}[t] 
	\bigg(
	{-}\sum_{r=0}^{k+\ell-2} & \binom{2r}{2k-1}  \zeta(\overline{2r+1}) \zeta(2k+2l-2r-1) \\
	& + \sum_{r=1}^{k+\ell-1} \binom{2r+1}{2k-1} \zeta(\overline{2r}) \zeta(2k+2\ell-2r) \bigg) y^{2k-1} x^{2\ell-1} \end{aligned} \\
	& = \sum_{r=0}^\infty \sum_{s=1}^\infty  \sum_{k=1}^r \bigg( {-}\binom{2r}{2k-1} \zeta(\overline{2r+1}) \zeta(2s-1) + \binom{2r-1}{2k-1} \zeta(\overline{2r}) \zeta(2s)  \bigg)y^{2k-1} x^{2s+2r-2k-1}\,.
\end{align*}
The sum over \( k \) can be evaluated explicitly, and (taking care with the \( r = 0 \) terms) one obtains 
\begin{align*}
	& = \begin{aligned}[t]
	\sum_{r,s=1}^\infty & \frac{1}{2} \big( (y-x)^{2r} - (y+x)^{2r} \big) \zeta(\overline{2r+1}) \zeta(2s-1) x^{2s-2} \\[-1ex]
	& + \sum_{r,s=1}^\infty \frac{1}{2}  \big( (y-x)^{2r-1} + (y+x)^{2r-1} \big) \zeta(\overline{2r}) \zeta(2s) x^{2s-1}
	\end{aligned} \\
	&= -\frac{1}{2} \big( B(y-x) - B(y+x) \big) A(x) - \frac{1}{2} \big( \widetilde{F}(y-x) + \widetilde{F}(y+x) \big) \widetilde{E}(x) \,.
\end{align*}
Likewise, one finds
\begin{align*}
& \sum_{k,\ell=1}^\infty \sum_{r=1}^{2k+2\ell-2}  \binom{r-1}{2\ell-1} \zeta(\overline{r})\zeta(2k+2\ell-r) \cdot y^{2k-1} x^{2\ell-1} \\
& = \frac{1}{2} \big( B(y-x) - B(y+x) \big) A(y)  + \frac{1}{2} \big( \widetilde{F}(y-x) - \widetilde{F}(y+x)  \big) \widetilde{E}(y) \,,
\end{align*}
which essentially amounts to switching \( x \leftrightarrow y \), and switching the sign between the two terms.

Overall, one obtains
\begin{align*}
	& K_\ev(x,y) - K_\alt(x,y) = \\
	& \frac{1}{2} (A(x) - A(y))(B(x-y) - B(x+y)) + \frac{1}{2} \widetilde{E}(y) (\widetilde{F}(x-y) + \widetilde{F}(x+y)) \\
	& - \frac{1}{2} \widetilde{E}(x) (\widetilde{F}(x-y) - \widetilde{F}(x+y)) - \frac{\widetilde{F}(x+y) - \widetilde{F}(x-y) - 2 \widetilde{F}(y)}{2x}\,.
\end{align*}

In exactly the same way, one finds for the second dihedral identity \autoref{eqn:dih:1and2lp1} -- taking care with the terms \( \zeta(1,2b+1) \) and \( \zeta(2a+1,1) \) missing from \( D_\od(x,y) \) -- that
\begin{align*}
	& K_\od(x,y) - D_\od(x,y) = \\
	& 2\zeta(2) + \frac{1}{2} (A(x) - A(y))(A(x) - A(x-y) + A(y) - A(x+y)) \\
	& + \frac{1}{2} ( \widetilde{E}(x) - \widetilde{E}(y) ) (\widetilde{E}(x) + \widetilde{E}(y) - \widetilde{E}(x-y)) + \frac{1}{2} \big( \widetilde{E}(x) + \widetilde{E}(y) \big) \widetilde{E}(x+y) \\
	& - \frac{\widetilde{E}(x-y) + \widetilde{E}(x+y)}{2x} - \frac{\widetilde{E}(y)}{y}\,.
\end{align*}

One may observe from this, that
\[
	K(x,0) = \frac{1}{4} \zeta(2) - \frac{3}{8x^2} - \frac{1}{2} A(x)^2 + \Big( \frac{\pi^2}{4} + E(x)^2 \Big) \Big( \frac{1}{2} + \frac{\sin(2\pi x)}{2 \pi x} \Big) \,.
\]

\paragraph{\bf Generating series for \autoref{eqn:zaltevev}:} Now we compute the generating series for the identity from \autoref{prop:zevbarevbar:eval}.  Again, taking \( \sum_{k,\ell=1}^\infty (\bullet)y^{2k-1} x^{2\ell-1}  \), note the variable order, we find the left hand side to be just
\(
	K_\alt(x,y) 
\).
The binomial times double zeta terms can be summed as follows.
\begin{align*}
	& \sum_{k,\ell=1}^\infty \sum_{i=2}^{2k+2\ell-2} 2^{-i} \binom{i-1}{2k-1} \zeta(2k+2\ell-i,i) \cdot  y^{2k-1} x^{2\ell-1} \\
	&= \sum_{r,s=1}^\infty \begin{aligned}[t] 
	\Big\{ 
	\zeta(2r,2s) \cdot & 2^{-2s} \sum_{k=1}^s \binom{2s-1}{2k-1} x^{2r + 2s - 2k-1} y^{2k-1} \\[-1ex]
	& + \zeta(2r + 1, 2s + 1) \cdot 2^{-2s-1} \sum_{k=1}^s \binom{2s}{2k-1} x^{2r + 2s + 1 - 2k} y^{2k-1}
	\Big\} \end{aligned} \\
	&= \frac{1}{4} \sum_{r,s=1}^\infty \Big\{ \begin{aligned}[t]
		\zeta(2r,2s) & x^{2r-1} \Big\{ \Big( \frac{x+y}{2} \Big)^{2s-1} -  \Big( \frac{x-y}{2} \Big)^{2s-1} \Big\} \\
		& + \zeta(2r+1,2s+1) x^{2r} \Big\{ \Big( \frac{x+y}{2} \Big)^{2s} -  \Big( \frac{x-y}{2} \Big)^{2s} \Big\}
		\Big\} \end{aligned} \\
	&= \frac{1}{4} \Big( D\Big(x, \frac{x+y}{2}\Big) - D\Big(x, \frac{x-y}{2}\Big) \Big)\,.
\end{align*}
Similarly with the other terms.  Overall, we find the generating series identity
\begin{align*}
	K_\alt(x,y) = \begin{aligned}[t]
		& \frac{1}{4} \Big( D\Big(x, \frac{x+y}{2}\Big) - D\Big(x, \frac{x-y}{2}\Big) + D\Big(\frac{x+y}{2},y\Big) - D\Big(-\frac{x-y}{2},y\Big) \Big) - D_\ev(x,y) \\
		& + \frac{1}{4} \Big( \widetilde{E}\Big(\frac{x+y}{2}\Big) - \widetilde{E}\Big(\frac{x-y}{2}\Big) \Big) \widetilde{E}(x) - \frac{1}{4} A(x) \Big( A\Big( \frac{x+y}{2} \Big) - A\Big( \frac{x-y}{2} \Big) \Big) \\
		& + \frac{3x - y}{4x(x-y)} \widetilde{E}\Big(\frac{x-y}{2}\Big) - \frac{3x + y}{4x(x+y)} \widetilde{E}\Big( \frac{x+y}{2} \Big) + \frac{1}{2x} \widetilde{E}\Big(\frac{y}{2} \Big)
 	\end{aligned}
\end{align*}

\subsection{\texorpdfstring{Explicit evaluations for \( \zeta^\star(\{2\}^a, 4, \{2\}^b) \)}{Explicit evaluations for zeta\textasciicircum{}*(\{2\}\textasciicircum{}a, 4, \{2\}\textasciicircum{}b)}}
\label{sec:eval:zs2242}

After substituting the previous generating series into the expression for \( G^\star(x,y) \) given via \autoref{eqn:zs2242full2}, performing a non-trivial amount of trigonometric manipulation, we find
{ 
\begin{align*}
	& G^\star(x,y) = \\[1ex]
	& \frac{2 F(x)}{y} \Big( {-}D\Big(\frac{x-y}{2}, x\Big) - D\Big(y, -\frac{x-y}{2}\Big) + D\Big(y, \frac{x+y}{2}\Big) + D\Big(\frac{x+y}{2}, x\Big) \Big) \\
	& - \frac{8 F(x)}{y} D_\ev(y,x) + \frac{8 F(y)}{y} D_\od(x,y) \\
	& + \frac{2 F(x)}{y} \Big\{ - A(y) \cdot \Big( A\Big( \frac{x+y}{2} \Big) - A\Big( \frac{x-y}{2} \Big) \Big) +
	 2 \big( A(x) - A(y) \big) \cdot \big( B(x+y) - B(x-y) \big) \Big\}  \\
	 & + \frac{4 F(y)}{y} \Big\{ A(x) \cdot \big(A(x) - 2 A(y) \big) + \big(A(x)  - A(y) \big) \cdot \big( A(x) + A(y) - A(x-y) - A(x+y) \big) \Big\} \\
	 & - 3 \zeta(2) \cdot \frac{1 - y \widetilde{E}(y)}{y^2} \cdot \sec\Big(\frac{\pi(x-y)}{2} \Big) \sec\Big( \frac{\pi(x+y)}{2}  \Big) - \frac{2 \big( x F(x) - y F(y) \big)}{y^2(x^2-y^2)} \\
	 & +  \frac{4 F(x)}{y} \bigg\{ \widetilde{E}(x) \Big( \frac{1}{y} + \widetilde{E}(x+y) - \widetilde{F}(x-y) \Big) - \frac{\widetilde{E}(\tfrac{1}{2} (x+y))}{x+y} + \frac{\widetilde{E}(\tfrac{1}{2} (x-y))}{x-y} \bigg\} \\
	 & + \frac{4 F(y)}{y} \bigg\{ \begin{aligned}[t] 2\zeta(2) + \widetilde{E}(y)^2 - \frac{2 \widetilde{E}(y)}{y} & + \frac{x \widetilde{E}(x) - y \widetilde{E}(y)}{x^2 - y^2} \\ & \hspace{2em} - \frac{(x+y) \widetilde{E}(x-y) - (x-y) \widetilde{E}(x+y)}{2 x y} \bigg\}\,. \end{aligned}
\end{align*}
}
If one so desires, the following explicit formula for the individual coefficient \( \zeta^\star(\{2\}^a, 4, \{2\}^b) \) can be extracted from the above.

\begin{thm}[Evaluation of \( \zeta^\star(\{2\}^a, 4, \{2\}^b) \) via double zeta values\protect\footnotemark]\label{thm:eval:zs2242}
\footnotetext{A computer readable version as plain text files in \texttt{Mathematica} syntax and in \texttt{pari/gp} syntax is attached to the arXiv submission.}
Write as shorthand \( \zeta(\overline{n}) = -(1-2^{1-n})\zeta(n) \), and employ the conventions that \( \zeta(0) = \zeta(\overline{0}) = -\frac{1}{2} \) and \( \zeta(1) = 0 \) (however no further regularisation is necessary).  Denote by \( E_n \) the \( n \)-th Euler number, given as the coefficients of 
	\(
	\operatorname{sech}(t) = \sum_{n=0}^\infty \frac{E_n}{n!} t^n \,.
	\)
	Then for any \( a, b \in \mathbb{Z}_{\geq0} \), the following evaluation holds, where we assume all summation variables start from \( 0 \),\clearpage
{\small  
\begin{align*}
	& \zeta^\star(\{2\}^a, 4, \{2\}^b) = \\[1em]
	& 
	\sum_{\crampedsubstack{p+q = a}}8  \zeta(\overline{2q}) \zeta(2b+2, 2p+2) 
	 - \sum_{\crampedsubstack{r + s = b}} 8 \delta_{a > 0} \zeta(\overline{2s}) \zeta(2a+1, 2r+3) 
	 \\	 &
	  - \sum_{\crampedsubstack{2u + i + j = 2a + 2b}}  2\bigg( \frac{1}{2^i} \binom{i+1}{2b+1} + \frac{1}{2^j} \binom{j+1}{2a+1-2u} \!  \bigg) \zeta(\overline{2u})  \zeta(i+2,j+2) 
	  \\[1.7em]\linebreak
	 & 
	 + \sum_{\crampedsubstack{p + q = a \\ r + s = b-1}} \bigg( \frac{1}{2^{2q+2s}} \zeta(2q + 2s + 3) 
	  - 8 \zeta(\overline{2s+2q+3}) \bigg) \binom{2 + 2q + 2s}{1 + 2s} \zeta(2r + 3) \zeta(\overline{2p}) 
	  \\[-1ex]	 & 
	 +  \sum_{\crampedsubstack{u + v + w = a-1}} 8\binom{2w + 2b + 2}{2b+1} \zeta(2u+3) \zeta(\overline{2v}) \zeta(\overline{2b+2w+3})
	 \\ &
	 - \sum_{\crampedsubstack{p + q = a-2}} 8 \zeta(2p+3) \zeta(2q+3) \zeta(\overline{2b+2}) 
	  +  \sum_{\crampedsubstack{r + s = b}} 8\zeta(2a+1) \zeta(2r+3) \zeta(\overline{2s})
	 \\	 &
	 + \sum_{\crampedsubstack{u + v + w = b-1}} 4 \delta_{a=0} \zeta(2u+3) \zeta(2v+3) \zeta(\overline{2w}) 
	 \\	 & 
	 +  \sum_{\crampedsubstack{p + q = a-1 \\ r + s = b+1}} 8\binom{2q + 2s}{2s} \zeta(\overline{2r}) \zeta(2p+3) \zeta(2q + 2s + 1) 
	 \\[-0.5ex]	 &
	 -  \sum_{\crampedsubstack{u + v + w = b}} \binom{2a + 2v}{2v} 8\zeta(\overline{2u}) \zeta(2w+3) \zeta(2a + 2v + 1) 
	 \\[1.7em]
	 & 
	 -\sum_{\crampedsubstack{i+j = 2a \\ r + s = 2b+2}}  3 \zeta(2) \frac{(-1)^{r} E_{i+r} E_{j+s}}{i!j!r!s!} \Big(\frac{\ii \pi}{2}\Big)^{2a+2b+2}
	 \\[-0.5ex]	 &
	 + \sum_{\crampedsubstack{i + j = 2a \\ r + s + 2t = 2b}} 3 \zeta(2) \frac{(-1)^r E_{i+r} E_{j+s}}{i!j!r!s!} \Big(\frac{\ii \pi}{2}\Big)^{2a+2b-2t} \zeta(2t+2) 
	 \\[-0.5ex]	 & 
	 + 2 \zeta(\overline{2a+2b+4}) +\sum_{\substack{p+q = a+1}} \frac{4}{2^{2p+2b}} \binom{2p + 2b}{2b+1} \zeta(2p+2b+2) \zeta(\overline{2q}) 
	 \\	 & 
	 +\sum_{\crampedsubstack{u + v + w = a}}  8 \binom{2u + 2b + 1}{2b+1} \zeta(\overline{2w}) \zeta(2v+2) \zeta(\overline{2u+2b+2}) 
	 \\	 & 
	 +  \sum_{\crampedsubstack{r + s = b+1}} 4\bigg( \! \binom{2a+2r+1}{2a+1} - \binom{2a+2r+1}{2r+1} \! \bigg) \zeta(2a+2r+2) \zeta(\overline{2s}) 
	 \\	 & 
	 -  \sum_{\crampedsubstack{r + s = b+1}} 4\zeta(2a+2r+2) \zeta(\overline{2s})
	 - \sum_{\crampedsubstack{u + v + w = b}}4 \delta_{a=0}  \zeta(2u+2) \zeta(2v+2) \zeta(\overline{2w}) 
	 \\	 & 
	 + \sum_{\crampedsubstack{r + s = b+1}} 8 \delta_{a=0} \zeta(2r+2) \zeta(\overline{2s})
	 - 8 \delta_{a=0} \zeta(2) \zeta(\overline{2b+2}) \,.
\end{align*} 
}
\end{thm}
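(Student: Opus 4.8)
The plan is to read off the coefficient of $x^{2a}y^{2b}$ from the closed form for the generating series $G^\star(x,y)$ obtained just above (itself the result of substituting the dihedral identities \autoref{eqn:dih:1and2lbar}, \autoref{eqn:dih:1and2lp1} and the Galois descent \autoref{prop:zevbarevbar:eval} into \autoref{eqn:zs2242full2}). By definition $G^\star(x,y)=\sum_{a,b\geq 0}\zeta^\star(\{2\}^a,4,\{2\}^b)x^{2a}y^{2b}$, so this coefficient \emph{is} the quantity we want, and every summand of the closed form is a product of the series $A,B,\widetilde E,\widetilde F,F$ and the double-zeta series $D,D_\ev,D_\od$, evaluated at one of the arguments $x,\,y,\,\tfrac{x+y}{2},\,\tfrac{x-y}{2},\,x\pm y$, times an elementary rational factor built from $\tfrac1y$, $\tfrac1{x\pm y}$, $\tfrac1{x^2-y^2}$, and $\sec\tfrac{\pi(x\pm y)}{2}$. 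Thus the proof is a mechanical (if lengthy) coefficient extraction, for which the only inputs are the defining power series of $A,B,\widetilde E,\widetilde F$, the Laurent expansion of $F$ (which has a simple pole at $z=0$), the binomial theorem to re-expand $(\tfrac{x\pm y}{2})^m$ and $(x\pm y)^m$ as monomials in $x,y$, and the definition $D(u,v)=\sum_{m,n\geq 2,\ m\equiv n\,(2)}\zeta(m,n)u^{m-1}v^{n-1}$ together with the analogues for $D_\ev,D_\od$.

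First I would dispatch all summands whose analytic content is a product of $F,A,B,\widetilde E,\widetilde F$ with one $D$-type series against $\tfrac1y$ (the entire first two lines of the $G^\star$ formula and the $A,B$-products on the lines below). For a representative term such as $\tfrac{2F(x)}{y}D\!\left(\tfrac{x+y}{2},x\right)$ one substitutes the defining series of $D$, expands $\left(\tfrac{x+y}{2}\right)^{m-1}$ binomially, multiplies by the Laurent expansion of $F(x)$ and by $\tfrac1y$, and collects the coefficient of $x^{2a}y^{2b}$; the $\tfrac1y$ together with the simple pole of $F$ shifts bidegree so that one lands on a finite sum of terms $\zeta(\overline{2q})\zeta(i+2,j+2)$, $\zeta(2u+3)\zeta(\overline{2v})\zeta(\cdots)$, $\zeta(\overline{2p})\zeta(2r+3)\zeta(\cdots)$ and so on, with binomial weights, which are exactly the double-zeta blocks and the $\zeta\!\cdot\!\zeta$-product blocks of the claimed formula. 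Carrying this out for each such summand recovers everything in the statement except the two Euler-number blocks, the $\zeta(\overline{2a+2b+4})$-type blocks, and the $\delta$-corrections.

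Next are the summands with genuinely rational prefactors $\tfrac1{x^2-y^2}$ and $\tfrac1{x\pm y}$ acting on $\widetilde E,\widetilde F$ — for instance $-\tfrac{2(xF(x)-yF(y))}{y^2(x^2-y^2)}$ and $\tfrac{4F(x)}{y}\big(\tfrac{\widetilde E(\frac12(x-y))}{x-y}-\tfrac{\widetilde E(\frac12(x+y))}{x+y}\big)$. Here one must first check that each such combination is an honest power series: the apparent poles along $x=\pm y$ and $y=0$ cancel, which is immediate from the evenness of the factors and their leading Laurent terms. One then expands a difference quotient $\tfrac{f(x)-f(y)}{x^2-y^2}$ as $\sum(\text{divided Taylor coefficients})x^{2p}y^{2q}$ and $\tfrac{\widetilde E(\frac12 u)}{u}$ as a power series in $u=x\pm y$, re-expanding binomially; collecting coefficients yields the $\zeta(\overline{2a+2b+4})$ term and the blocks weighted by $\binom{2p+2b}{2b+1}$, $\binom{2u+2b+1}{2b+1}$ and $\binom{2a+2r+1}{2a+1}-\binom{2a+2r+1}{2r+1}$. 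The $\delta_{a=0}$ and $\delta_{a>0}$ corrections appear here too: the series $D_\od$ (equivalently $K_\od$) is indexed from $a,b\geq 1$ whereas the $A$- and $\widetilde E$-built terms run from $0$, so one subtracts the excluded $\zeta(1,2b+1)$, $\zeta(2a+1,1)$, $\zeta(1,1)$ contributions and applies their standard reductions. Finally, for the two $\sec$-terms, use $\sec t=\operatorname{sech}(\ii t)=\sum_{n\geq 0}\tfrac{E_n}{n!}(\ii t)^n$ to write each $\sec\tfrac{\pi(x\pm y)}{2}$ as a power series, multiply the two out and expand $(x\pm y)^n$ binomially so the cross-terms organise into the combinations $E_{i+r}E_{j+s}$, then multiply by $-3\zeta(2)\tfrac{1-y\widetilde E(y)}{y^2}=-3\zeta(2)\big(\tfrac1{y^2}-\sum_{t\geq 0}\zeta(2t+2)y^{2t}\big)$: the $\tfrac1{y^2}$ forces $r+s=2b+2$ and produces the first Euler-number block, and the $-\zeta(2t+2)y^{2t}$ part forces $r+s+2t=2b$ and produces the second.

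The main obstacle is organisational rather than conceptual: the closed form for $G^\star$ has many summands, each demanding one or two reindexings and careful bookkeeping of the leading Laurent terms of $F,\widetilde E,\widetilde F$, and it is especially easy to mismanage the boundary contributions $\zeta(1,\cdot),\zeta(\cdot,1),\zeta(1,1)$ that pin down the $\delta$-corrections, or to lose signs in the $\sec$ expansion. To keep this under control I would fix conventions once and for all ($\zeta(0)=\zeta(\overline{0})=-\tfrac12$, $\zeta(1)=0$; no further regularisation is needed since $\zeta^\star(\{2\}^a,4,\{2\}^b)$ converges), extract the coefficient for several small pairs $(a,b)$ by hand and compare against a direct numerical evaluation, and cross-check the resulting closed form against \autoref{eqn:z2242aszs2242} and, modulo products, against \autoref{lem:eval:z2242prodmot}. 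The computer-readable implementation attached to the arXiv submission records the full extraction and serves as the final verification.
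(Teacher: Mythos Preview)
Your proposal is correct and follows essentially the same approach as the paper: the paper assembles the closed form for $G^\star(x,y)$ and then simply states that the explicit formula ``can be extracted from the above,'' leaving the coefficient extraction implicit, whereas you have spelled out in more detail how each block of terms arises from the various summands of the generating series. There is nothing substantively different between the two.
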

\clearpage

\subsection{\texorpdfstring{Explicit evaluations for \( \zeta(\{2\}^a, 4, \{2\}^b) \)}{Explicit evaluations for zeta(\{2\}\textasciicircum{}a, 4, \{2\}\textasciicircum{}b)}}
\label{sec:eval:z2242}
By substituting the expression for \( G^\star(x,y) \) into \autoref{eqn:gs:zs2242asz2242}, and finding \( G(x,y) \) via
\[
	G(x,y) = \sum_{a,b=0}^\infty (-1)^{a+b} \zeta(\{2\}^a, 4, \{2\}^b) x^{2a} y^{2b} = G^\star(y,x) \frac{\sin(\pi x)}{\pi x} \frac{\sin(\pi y)}{\pi y} \,,
\]
we obtain the following explicit expression for the generating series,

{
	\begin{align*}
	& G(x,y) = \\[1ex]
	& \frac{\sin(\pi x)}{\pi x^2 y} \Big( {-}D\Big(-\frac{x-y}{2}, x\Big) - D\Big(x, \frac{x-y}{2}\Big) + D\Big(x, \frac{x+y}{2}\Big) + D\Big(\frac{x+y}{2}, y\Big) \Big) \\
	& - \frac{4\sin(\pi x)}{\pi x^2 y} D_\ev(x,y) + \frac{\sin(\pi y)}{\pi x^2 y} D_\od(y,x) \\
	& - \frac{\sin(\pi x)}{\pi x^2 y} \Big\{ A(x) \cdot \Big( A\Big( \frac{x+y}{2} \Big) - A\Big( \frac{x-y}{2} \Big) \Big) +
	2 \big( A(x) - A(y) \big) \cdot \big( B(x+y) - B(x-y) \big) \Big\}  \\
	& - \frac{\sin(\pi y)}{\pi x^2 y} \Big\{ A(y) \cdot \big(2A(x) - A(y) \big) + \big(A(x)  - A(y) \big) \cdot \big( A(x) + A(y) - A(x-y) - A(x+y) \big) \Big\} \\
	& - 3 \zeta(2) \cdot \frac{1 - x \widetilde{E}(x)}{x^2} \cdot \sec\Big(\frac{\pi(x-y)}{2} \Big) \sec\Big( \frac{\pi(x+y)}{2}  \Big) \frac{\sin(\pi x)}{\pi x}\frac{\sin(\pi y)}{\pi y} \\
	 & \quad - \frac{1}{x^2(x^2-y^2)} \Big( \frac{\sin(\pi x)}{\pi x} - \frac{\sin(\pi y)}{\pi y} \Big)\\
	& +  \frac{2\sin(\pi x)}{\pi x^2 y}  \bigg\{ \widetilde{E}(y) \Big( \frac{1}{x} + \widetilde{E}(x+y) + \widetilde{F}(x-y) \Big) - \frac{\widetilde{E}(\tfrac{1}{2} (x+y))}{x+y} + \frac{\widetilde{E}(\tfrac{1}{2} (x-y))}{x-y} \bigg\} \\
	& + \frac{2\sin(\pi y)}{\pi x^2 y}  \bigg\{ \begin{aligned}[t] 
 2\zeta(2) + \widetilde{E}(x)^2 - \frac{2 \widetilde{E}(x)}{x} & + \frac{x \widetilde{E}(x) - y \widetilde{E}(y)}{x^2 - y^2} \\ 
 & \hspace{2em} + \frac{(x+y) \widetilde{E}(x-y) - (x-y) \widetilde{E}(x+y)}{2 x y} \bigg\} \,. \end{aligned}
	\end{align*}
}

If one desires, the following explicit formula for the individual coefficient \( \zeta(\{2\}^a, 4, \{2\}^b) \) can be extracted from the above.  

\begin{thm}[Evaluation of \( \zeta(\{2\}^a, 4, \{2\}^b) \) via double zeta values\protect\footnotemark]\label{thm:eval:z2242}
\footnotetext{A computer readable version as plain text files in \texttt{Mathematica} syntax and in \texttt{pari/gp} syntax is attached to the arXiv submission.}
Write as shorthand \( \zeta(\overline{n}) = -(1-2^{1-n})\zeta(n) \), and employ the conventions that \( \zeta(0) = \zeta(\overline{0}) = -\frac{1}{2} \) and \( \zeta(1) = 0 \) (however no further regularisation is necessary). Denote by \( E_n \) the \( n \)-th Euler number, given as the coefficients of 
	\(
	\operatorname{sech}(t) = \sum_{n=0}^\infty \frac{E_n}{n!} t^n \,.
	\)
	Then for any \( a, b \in \mathbb{Z}_{\geq0} \), the following evaluation holds, where we assume all summation variables start from \( 0 \),\newpage
{\small \allowdisplaybreaks
\begin{align*}
&  \zeta(\{2\}^a, 4, \{2\}^b) = 
\\[1em]
& \begin{aligned}[t]  (-1)^{a+b}  \Bigg\{ & -\sum_{\crampedsubstack{p + q = a}}  4 \zeta(2p+2, 2b+2) \frac{(\ii\pi)^{2q}}{(2q+1)!}
+ \sum_{\crampedsubstack{r + s = b-1}} 4 \zeta(2r+3, 2a+3) \frac{(\ii\pi)^{2s}}{(2s+1)!} 
\\
& 
+ \sum_{\crampedsubstack{2u + i + j = 2a + 2b}} \bigg( \frac{1}{2^i} \binom{i+1}{2a-2u+1} + \frac{1}{2^j} \binom{j+1}{2b+1} \! \bigg) \zeta(i+2, j+2) \frac{(\ii\pi)^{2u}}{(2u+1)!} 
\\[1.7em]
&
+ \sum_{\crampedsubstack{u + v + w = a-1}} 4 \binom{2w+2b+2}{2b+1} \bigg( \zeta(\overline{2b+2w+3}) - \frac{ \zeta(2b+2w+3) }{2^{2b+2w+3}}\bigg) \zeta(2v+3) \frac{(\ii\pi)^{2u}}{(2u+1)!} 
\\ 
&
- \sum_{\crampedsubstack{p + q = a \\ r + s = b}} 4 \binom{2p + 2s}{2s-1} \zeta(2r+3) \zeta(\overline{2s+2p+1}) \frac{(\ii\pi)^{2q}}{(2q+1)!} 
\\[-1ex]
&
- \sum_{\crampedsubstack{u + v + w = b-1}} 4 \binom{2a + 2v + 2}{2v} \zeta(2w+3) \zeta(2a + 2v + 3) \frac{(\ii\pi)^{2u}}{(2u+1)!}
\\
&
 + \sum_{\crampedsubstack{p + q = a \\ r + s = b}} 4 \binom{2q + 2r}{2r} \zeta(2p+3) \zeta(2q + 2r + 1) \frac{(\ii\pi)^{2s}}{(2s+1)!}
\\[-1ex]
&
 - \sum_{\crampedsubstack{p + q = a-1}} 2 \zeta(2p+3) \zeta(2q+3) \frac{(\ii\pi)^{2b}}{(2b+1)!} 
 - \sum_{\crampedsubstack{r + s = b-1}} 4 \zeta(2a+3) \zeta(2r+3) \frac{(\ii\pi)^{2s}}{(2s+1)!} 
 \\[1.7em]
 &
 - \sum_{\crampedsubstack{i + j + 2k = 2a + 2 \\ p + q + 2r = 2b}} 3 \zeta(2) \frac{(-1)^p E_{i+p} E_{j+q}}{i!j!p!q!} \Big( \frac{\ii\pi}{2} \Big)^{2a+2b+2} \frac{2^{2k+2r}}{(2k+1)! (2r+1)! }
 \\[-0.5ex]
 &
 + \sum_{\crampedsubstack{i + j + 2k + 2\ell = 2a  \\ p + q + 2r = 2b}} 3 \zeta(2) \frac{(-1)^p E_{i+p} E_{j+q}}{i!j!p!q!} \Big( \frac{\ii\pi}{2} \Big)^{2a+2b-2\ell} \frac{2^{2k+2r}}{(2k+1)! (2r+1)! } \zeta(2\ell+2) 
 \\
 &
 + \frac{(\ii\pi)^{2a+2b+4}}{(2a+2b+5)!} 
 + 2 \zeta(2b+2) \frac{(\ii\pi)^{2a+2}}{(2a+3)!}
 - 4 \zeta(2a+4) \frac{(\ii\pi)^{2b}}{(2b+1)!} 
 \\
 &
 - \sum_{\crampedsubstack{p + q = a+1}} \frac{1}{2^{2p+2b-1}} \binom{2p+2b}{2b+1} \zeta(2p+2b+2) \frac{(\ii\pi)^{2q}}{(2q+1)!}
 \\
 &
 - \sum_{\crampedsubstack{p + q = a \\ r + s = b}} 4 \binom{2p+2r+1}{2p+1} \zeta(\overline{2p+2r+2}) \zeta(2s+2) \frac{(\ii\pi)^{2q}}{(2q+1)!} \\[-0.5ex]
 & + \sum_{\crampedsubstack{r + s = b}} 2 \zeta(2s+2r+4) \bigg( \! \binom{2a + 2r + 3}{2a+3} - \binom{2a+2r+3}{2r+1} \! \bigg) \frac{(\ii\pi)^{2s}}{(2s+1)!}  
 \\[-0.5ex]
 &
 + \sum_{\crampedsubstack{r + s = b}} 2 \zeta(2a+2r+4) \frac{(\ii\pi)^{2s}}{(2s+1)!}
 + \sum_{\crampedsubstack{p + q = a}} 2 \zeta(2p+2) \zeta(2q+2) \frac{(\ii\pi)^{2b}}{(2b+1)!} \,\, \Bigg\} \,.
\end{aligned}
\end{align*}
}
\end{thm}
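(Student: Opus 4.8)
The plan is to prove \autoref{thm:eval:z2242} by pure coefficient extraction from the closed form of the generating series $G(x,y) = \sum_{a,b\geq 0}(-1)^{a+b}\zeta(\{2\}^a,4,\{2\}^b)x^{2a}y^{2b}$ displayed at the start of \autoref{sec:eval:z2242}; no further input about the arithmetic of MZVs is needed. For orientation one first recalls how that closed form arises: by \autoref{eqn:z2242aszs2242}, $\zeta(\{2\}^a,4,\{2\}^b)$ is a finite convolution of the values $\zeta^\star(\{2\}^m,4,\{2\}^n)$ against $\zeta(\{2\}^{a-n})\zeta(\{2\}^{b-m})$, which in generating-series form reads $G(x,y)=G^\star(y,x)\,\frac{\sin(\pi x)}{\pi x}\,\frac{\sin(\pi y)}{\pi y}$, using $\sum_p(-1)^p\zeta(\{2\}^p)x^{2p}=\frac{\sin(\pi x)}{\pi x}$. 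Substituting the explicit $G^\star$ from \autoref{sec:eval:zs2242} and simplifying trigonometrically --- using e.g.\ $F(z)\cdot\frac{\sin(\pi z)}{\pi z}=\frac{1}{2z}$ and the fact that $\sec\frac{\pi(x\pm y)}{2}\cdot\frac{\sin(\pi x)\sin(\pi y)}{\pi^2 xy}$ is entire, together with the cancellation of the polar parts of $\widetilde E,\widetilde F$ against the explicit rational terms such as $\frac{1}{x^2(x^2-y^2)}\bigl(\frac{\sin\pi x}{\pi x}-\frac{\sin\pi y}{\pi y}\bigr)$ --- produces the displayed closed form of $G(x,y)$ as a sum whose summands are each a product of a double- or single-zeta generating series (with arguments possibly rescaled by $\half$ or shifted to $x\pm y$) and an entire building block: a ratio of $\sin$'s, a product of $\operatorname{sech}$'s, or a rational function with explicit Taylor coefficients.

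The second step is the coefficient extraction $[x^{2a}y^{2b}]$ from that closed form. The building blocks expand via the standard series $\frac{\sin(\pi z)}{\pi z}=\sum_k\frac{(\ii\pi)^{2k}}{(2k+1)!}z^{2k}$ (producing the factors $\frac{(\ii\pi)^{2q}}{(2q+1)!}$) and $\operatorname{sech}(t)=\sum_n\frac{E_n}{n!}t^n$; expanding $\operatorname{sech}\frac{\pi(x-y)}{2}\,\operatorname{sech}\frac{\pi(x+y)}{2}$ as a product of two such series and re-collecting powers of $x$ and $y$ gives the Euler-number factors $E_{i+p}E_{j+q}$ together with the convolution indices $i+j+\cdots=2a$, $p+q+\cdots=2b$. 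A rescaled argument is handled by $\bigl(\tfrac{x\pm y}{2}\bigr)^m=2^{-m}\sum_k\binom{m}{k}(\pm1)^kx^{m-k}y^k$, which is the source of the binomial coefficients (e.g.\ $\binom{2p+2s}{2s-1}$, $\binom{2q+2r}{2r}$, $\binom{2w+2b+2}{2b+1}$) paired with $2$-power denominators; and a shifted argument such as $A(x\pm y)=\sum_r\zeta(2r+1)(x\pm y)^{2r}$ or $B(x\pm y)$ likewise expands by the binomial theorem into $\zeta(2r+1)$'s with binomial weights. Carrying these convolutions out term by term across all summands of $G$, keeping track of parities, of the index conventions (the missing $\zeta(1,2b+1),\zeta(2a+1,1)$ terms for $D_\od$, and the $\delta_{a=0}$, $\delta_{a>0}$ contributions), and collecting like terms produces exactly the formula in the statement.

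I expect the main obstacle to be bookkeeping rather than anything conceptual. The trigonometric passage from $G^\star(y,x)\frac{\sin\pi x}{\pi x}\frac{\sin\pi y}{\pi y}$ to the displayed closed form of $G$ is the substantial trigonometric simplification the text alludes to, and in the coefficient extraction one must re-expand the rescaled arguments $\tfrac{x\pm y}{2}$, the shifted arguments $x\pm y$, and the rational factors consistently, and verify that every spurious pole at $x=0$, $y=0$, and $x=\pm y$ cancels. No single step is deep, but getting each sign, $2$-power, binomial range, and $\delta$-term exactly right is delicate; the natural checks are to compare small cases against the MZV Data Mine \cite{MZVDM} and against \autoref{thm:eval:zs2242} via \autoref{eqn:z2242aszs2242}.
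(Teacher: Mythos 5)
Your proposal is correct and takes essentially the same route as the paper: substitute the explicit $G^\star(x,y)$ into $G(x,y) = G^\star(y,x)\,\frac{\sin(\pi x)}{\pi x}\,\frac{\sin(\pi y)}{\pi y}$, simplify trigonometrically to obtain the displayed closed form of $G(x,y)$, and then read off the coefficient of $x^{2a}y^{2b}$ term by term. The paper leaves the final extraction as a routine (if tedious) computation, and your account of which building blocks --- the $\frac{\sin(\pi z)}{\pi z}$ series, the $\operatorname{sech}$ expansion for Euler numbers, and the binomial re-expansion of rescaled and shifted arguments --- produce which factors in the theorem is exactly the bookkeeping the paper has in mind.
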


In particular, we obtain the following corollary on the reduction of \( \zeta(\{2\}^a, 4, \{2\}^b) \) modulo products.  In essence it extracts those double zeta terms above, which are not multiplied by a power of \( \pi \).
\begin{cor}\label{cor:eval:z2242prod}
	Modulo decomposables (i.e. products of MZVs), the following evaluation holds
	\begin{align*}
	 &  \zeta(\{2\}^a, 4, \{2\}^b) = \\
	& (-1)^{a+b} \bigg\{ {-} 4 \zeta(2a+2, 2b+2) +  4 \zeta(2b+1, 2a+3)
	\\ & 
	+ \sum_{\crampedsubstack{i + j = 2a + 2b \\ i, j \geq 0}} \bigg(  \frac{1}{2^i} \binom{i+1}{2a+1} + \frac{1}{2^j} \binom{j+1}{2b+1} \! \bigg) \zeta(i+2, j+2) \bigg\} 
	\pmod{\mathrm{products}} \,.
	\end{align*}
\end{cor}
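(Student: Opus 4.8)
The plan is to obtain this reduction by simply discarding the decomposable summands from the fully explicit evaluation in \autoref{thm:eval:z2242}.

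The bookkeeping rests on one elementary observation: since $\pi^2 = 6\zeta(2)$, every factor $(\ii\pi)^{2m}$ occurring in \autoref{thm:eval:z2242} is a rational multiple of $\zeta(2)^m$, and every even Riemann zeta value $\zeta(2n)$ with $n \geq 1$ is a rational multiple of $\pi^{2n} = (6\zeta(2))^n$. Hence any summand of that evaluation carrying a factor $(\ii\pi)^{2m}$ with $m \geq 1$, an even single zeta value, or a genuine product of two or more (multiple) zeta values is a product of elements of positive weight, and therefore vanishes modulo products. It matters that this never produces an isolated $\zeta(2)$: the only pure powers of $\pi$ appearing in \autoref{thm:eval:z2242} have the form $(\ii\pi)^{2a+2b+4}/(2a+2b+5)!$, so their exponent is at least $4$ and they are rational multiples of $\zeta(2)^{a+b+2}$ with $a+b+2 \geq 2$, hence decomposable.

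First I would run through the roughly twenty groups of summands in \autoref{thm:eval:z2242}. Every group containing a product $\zeta(\cdots)\zeta(\cdots)$, the two $\zeta(2)$-weighted blocks built from Euler numbers, the pure power-of-$\pi$ terms, and all terms of the shape $\zeta(2n)(\ii\pi)^{2k}/(2k+1)!$ are decomposable by the observation above and drop out. The only groups that can contribute an indecomposable term are the first three sums: $-\sum_{p+q=a} 4\zeta(2p+2,2b+2)(\ii\pi)^{2q}/(2q+1)!$, $\sum_{r+s=b-1} 4\zeta(2r+3,2a+3)(\ii\pi)^{2s}/(2s+1)!$, and $\sum_{2u+i+j=2a+2b}\bigl(2^{-i}\binom{i+1}{2a-2u+1} + 2^{-j}\binom{j+1}{2b+1}\bigr)\zeta(i+2,j+2)(\ii\pi)^{2u}/(2u+1)!$. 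In each of these, every term carrying a nontrivial power of $\pi$ is again decomposable, so only the terms $q=0$, $s=0$, $u=0$ survive, contributing $-4\zeta(2a+2,2b+2)$, $4\zeta(2b+1,2a+3)$, and $\sum_{i+j=2a+2b}\bigl(2^{-i}\binom{i+1}{2a+1} + 2^{-j}\binom{j+1}{2b+1}\bigr)\zeta(i+2,j+2)$ respectively. Multiplying by the overall sign $(-1)^{a+b}$ gives precisely the claimed identity.

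I do not anticipate a real obstacle: the proof is a careful but routine sorting of terms. The step that needs the most care is the exhaustive check that no indecomposable summand is overlooked, particularly in the boundary cases $a = 0$ or $b = 0$, where some of the sums above become empty and where expressions such as $\zeta(2a+4)$ (for $b=0$), $\zeta(1,2a+3)$, or $\zeta(2)(\ii\pi)^{2m}$ may appear and must be recognised as decomposable — for instance $\zeta(1,2a+3)$ is a $\Q$-combination of $\zeta(2a+4)$ and products $\zeta(i)\zeta(2a+4-i)$ by shuffle regularisation, hence vanishes modulo products, consistent with the second sum being empty when $b=0$. As an alternative route one could bypass \autoref{thm:eval:z2242} entirely and deduce the corollary by applying the period map to \autoref{lem:eval:z2242prodmot}, whose motivic statement is already the product-free shadow of the evaluation.
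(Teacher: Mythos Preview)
Your proposal is correct and follows exactly the approach the paper takes: as stated just before \autoref{cor:eval:z2242prod}, the corollary ``in essence \ldots\ extracts those double zeta terms above, which are not multiplied by a power of \( \pi \)'' from \autoref{thm:eval:z2242}, which is precisely the bookkeeping you carry out (including the boundary case \( b = 0 \), where \( \zeta(1,2a+3) \) reduces to products by Euler's classical evaluation).
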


\section{\texorpdfstring{Motivic evaluation of \( \zm(\{2\}^a, 4, \{2\}^b) \)\except{toc}{\\} via motivic double zeta values}{Motivic evaluation of zeta\textasciicircum{}m(\{2\}\textasciicircum{}a, 4, \{2\}\textasciicircum{}b) via motivic double zeta values}}
\label{sec:mot2242}

In order to verify that the evaluation of \( \zeta(\{2\}^a, 4, \{2\}^b) \) in \autoref{sec:eval:z2242} (or at least, the evaluation in \autoref{cor:eval:z2242prod}) is motivic we only need to show that the various ingredients used in \autoref{sec:num2242} are motivic.

More precisely, we need to show that \autoref{prop:zevbarevbar:eval} and \autoref{eqn:zs2242full2} are motivic.  All other identities used in the derivation of \autoref{cor:eval:z2242prod}, \autoref{thm:eval:z2242} and \autoref{thm:eval:zs2242} were obtained from the shuffle or stuffle product, and so are automatically motivic (the shuffle product is motivic by definition, for the stuffle-product see \cite{RacinetThesis,SouderesDblSh}). \medskip

\paragraph{\bf Framework of alternating motivic MZVs:} We note here that the motivic MZV framework of \autoref{sec:motmzv} generalises readily to the case of alternating motivic MZVs.  For the technical details of this, we refer to \cite{GlanoisThesis16,GlanoisBasis16}; the most important points are the comodule of alternating motivic MZVs \( \mathcal{H}^{(2)} \) is obtained by extending \autoref{def:Hn} to allow \( a_i \in \{0, \pm 1 \} \) (although functoriality in a useful form only applies when all \( a_i \in \{ 0, 1 \}\)).  Then for a tuple \( (k_1,\ldots,k_d) \) of positive integers, and \( (\eps_1,\ldots,\eps_d) \in \{\pm1\}^d \) of signs, and \( \ell \geq 0 \), we define the motivic alternating MZV by
\begin{equation}\label{eqn:altmzv:mot}
	\zeta^\mot_\ell\bigg( \begin{matrix} \eps_1,\eps_2,\ldots,\eps_d \\ k_1,k_2,\ldots,k_d \end{matrix} \bigg) \coloneqq (-1)^d I^\mot(0; \{0\}^{\ell}, \eta_1, \{0\}^{k_1-1}, \eta_2, \{0\}^{k_2-1}, \ldots, \eta_d, \{0\}^{k_d-1} ;1) \,,
\end{equation}
where \( \eta_i = \eps_i \eps_{i+1} \cdots \eps_d \), mimicking the integral representation of alternating MZVs in \autoref{eqn:altmzv:int}  One can again streamline the notation by dropping the \( \eps_i \)'s and writing \( \overline{k_i} \) if \( \eps_i = -1 \), and just \( k_i \) if \( \eps_i = 1 \).  Then \( \mathcal{A}^{(2)} \coloneqq \mathcal{H}^{(2)} / ( \zeta^\mot(2) ) \) and \( \mathcal{L}^{(2)} = \mathcal{A}^{(2)}_{>0} / \mathcal{A}^{(2)}_{>0}\mathcal{A}^{(2)}_{>0} \) define the obvious extensions of the Hopf algebra and the Lie coalgebra of irreducibles.  The coaction \( \Delta \colon \mathcal{H}^{(2)} \to \mathcal{A}^{(2)} \otimes \mathcal{H}^{(2)} \) is defined by the same formula as in \autoref{eqn:coaction}, and the  infinitesimal derivations \( \D_r \colon \mathcal{H}^{(2)} \to \mathcal{L}^{(2)}_r \otimes \mathcal{H}^{(2)} \), with \( \mathcal{L}^{(2)}_r \) the weight \( r \) component of \( \mathcal{L}^{(2)} \), are given by the same formula as in \autoref{eqn:derivation}.

For alternating motivic MZVs, \( \D_1 \) plays a non-trivial role, as the weight 1 alternating MZV \( \zeta^\mot(\overline{1}) = \log^\mot(2) \) is non-zero.  The analogue of Brown's characterisation of \( \ker \D_{<N} \) in the alternating case is given by Glanois as follows.

\begin{thm}[Glanois, Corollary 2.4.5 \cite{GlanoisThesis16}] Let \( N \geq 1 \), and denote by \( \D_{<N} \linebreak =  \bigoplus_{1 \leq 2r+1 < N} \D_{2r+1} \).  Then in weight \( N \), the kernel of \( D_{<N} \) on alternating motivic MZVs is one dimensional:
	\[
		\ker \D_{<N} \cap \mathcal{H}^{(2)}_N = \begin{cases}
			\mathbb{Q} \zeta^\mot(\overline{1}) = \mathbb{Q}  \log^\mot(2) & \text{if $N = 1$} \\ 
			\mathbb{Q}  \zeta^\mot(N) & \text{if \( N > 1 \)\,.}
		\end{cases}
	\]
\end{thm}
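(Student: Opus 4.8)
The plan is to run the argument behind \autoref{thm:coaction:identities} and \autoref{prop:coaction:modprod} verbatim in the level-$2$ setting, the only genuinely new input being Deligne's structure theorem for $\pi_1^{\mathrm{mot}}(\mathbb{G}_m\setminus\mu_2)$, as packaged in \cite{GlanoisThesis16}. That theorem gives that the level-$2$ motivic Lie algebra is \emph{free}, on one generator in each odd weight $\geq 1$ — the weight-$1$ generator corresponding to $\log^\mot(2)=\zm(\overline 1)$, with no even-weight generators. Dualising, one obtains a non-canonical isomorphism of graded comodule algebras
\[
  \psi\colon\mathcal{H}^{(2)}\xrightarrow{\ \sim\ }\Q\langle f_1,f_3,f_5,\ldots\rangle\otimes_\Q\Q[f_2]\,,
\]
where the first factor carries the shuffle product and deconcatenation coproduct, $f_2$ is coaction-invariant, and which I normalise so that $\psi(\zm(\overline 1))=f_1$, $\psi(\zm(2k+1))=f_{2k+1}$ for $k\geq1$, and $\psi(\zm(2))=f_2$. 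After this, the proof is combinatorics on the $f$-alphabet.

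First I would transport the derivations. Exactly as in the proof of \autoref{prop:coaction:modprod}, let $f_{2r+1}^\vee$ be the functional on $\mathcal{A}^{(2)}_{2r+1}$ extracting the coefficient of the single-letter word $f_{2r+1}$; it vanishes on products (any product of positive-weight words has length $\geq 2$) and so descends to $\mathcal{L}^{(2)}_{2r+1}$, where it is the functional dual to the Lie generator corresponding to $\zl(2r+1)$. Since $\D_{2r+1}$ acts only on the $\Q\langle f_1,f_3,\ldots\rangle$-tensor-factor (the coaction-invariant $f_2$'s just ride along into the right slot) and the target coproduct is deconcatenation, one computes for any $\Xi$ that $(f_{2r+1}^\vee\otimes\mathrm{id})\D_{2r+1}\Xi$ equals the sum, over the basis words $w$ occurring in $\Xi$ whose first letter is $f_{2r+1}$, of their tails $w'$ (where $w=f_{2r+1}w'$) — the contributions from cutting off more than one letter being invisible to $f_{2r+1}^\vee$, since a word of length $\geq 2$ has zero $f_{2r+1}$-coefficient. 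Distinct such $w$ have distinct tails, so $\D_{2r+1}\Xi=0$ forces the coefficient in $\Xi$ of every word with first letter $f_{2r+1}$ to vanish.

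Now take $\xi\in\mathcal{H}^{(2)}_N$ with $\D_{<N}\xi=0$ and apply the previous step to $\Xi=\psi(\xi)$ for each odd $2r+1<N$: every weight-$N$ basis word in $\psi(\xi)$ whose leading letter has weight $<N$ must have coefficient $0$. The only weight-$N$ basis words of $\Q\langle f_1,f_3,\ldots\rangle\otimes\Q[f_2]$ not of this form are $f_N$ itself when $N$ is odd, and $1\otimes f_2^{N/2}$ when $N$ is even. Hence $\psi(\xi)\in\Q f_N$, i.e.\ $\xi\in\Q\zm(N)$, when $N>1$ is odd; $\psi(\xi)\in\Q\, f_2^{N/2}$, i.e.\ $\xi\in\Q\,\zm(2)^{N/2}=\Q\,\zm(N)$ by the motivic Euler evaluation $\zm(2n)\in\Q^{\times}\zm(2)^{n}$ \cite{BrownMTM12}, when $N$ is even; and when $N=1$ the operator $\D_{<1}$ is the empty direct sum, so the kernel is all of $\mathcal{H}^{(2)}_1=\Q\log^\mot(2)$ (the only weight-$1$ alternating motivic MZV, as $\zm(1)$ regularises to $0$), again matching the claim. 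The reverse inclusions are immediate, since $\D_{2r+1}f_N=0$ for $2r+1<N$ and $\D_{2r+1}$ annihilates $\Q[f_2]$.

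The main obstacle is Step 1: producing the $f$-alphabet isomorphism with the coaction becoming deconcatenation rests on Deligne's freeness theorem for $\pi_1^{\mathrm{mot}}(\mathbb{G}_m\setminus\mu_2)$ and is quoted rather than proved here. Granting it, the only subtlety is the bookkeeping of the coaction-invariant line $\Q f_2$: in even weight the kernel is the Tate line $\Q\zm(2)^{N/2}$ rather than zero, and one needs Euler's relation to recognise it as $\Q\zm(N)$; the presence of $\D_1$ (absent in the level-$1$ \autoref{thm:coaction:identities}) is what kills words with a leading $f_1$ and makes the edge case $N=1$ degenerate correctly.
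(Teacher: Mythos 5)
Your proof is correct and takes the same route as the source: the paper quotes this result from Glanois' thesis without reproducing the argument, and her proof — like yours — is the direct level-$2$ adaptation of Brown's $f$-alphabet argument behind \autoref{thm:coaction:identities} and \autoref{prop:coaction:modprod}, resting on Deligne's freeness theorem for the motivic fundamental group of $\mathbb{G}_m\setminus\mu_2$ to supply the comodule isomorphism $\mathcal{H}^{(2)}\cong\Q\langle f_1,f_3,\ldots\rangle\otimes\Q[f_2]$, after which the kernel computation is the same deconcatenation combinatorics. (One small stylistic note: it is cleaner to compute $\ker\D_{<N}$ abstractly on the $f$-side first and then identify it with $\Q\zm(N)$, resp.\ $\Q\log^\mot(2)$, by observing those elements are nonzero and killed by $\D_{<N}$ — rather than fixing the normalisation $\psi(\zm(2k+1))=f_{2k+1}$ in advance, since justifying that normalisation already uses the one-dimensionality being proved; either ordering is logically sound.)
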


Since the identities we wish to lift involve alternating MZV terms in a non-trivial way, we necessarily have to use Glanois' criterion to verify the motivic lift, even if as it happens \( \D_1 = 0 \) in each case.

\subsection{Motivic version of \autoref{prop:zevbarevbar:eval}}
\label{sec:mot_doubling} 
	
	We prove the following proposition which claims that \autoref{prop:zevbarevbar:eval} lifts to a motivic version.
	
	\begin{prop}[Motivic Galois descent of \( \zm(\overline{2\ell},\overline{2k}) \)]\label{prop:zevbarevbar:mot}
		The alternating motivic double zeta value \( \zm(\overline{2\ell},\overline{2k}) \) enjoys a Galois descent to classical depth 2 motivic MZVs as follows
		\begin{equation}
		\label{eqn:zaltevevmot}
		\begin{aligned}[c]
		\zm(\overline{2\ell},\overline{2k}) = 
		\begin{aligned}[t]
		& \sum_{i=2}^{2k + 2\ell - 2} 2^{-i} \bigg\{ \binom{i-1}{2k-1} \zm(2k+2\ell-i,i) + \binom{i-1}{2\ell-1} \zm(i,2k+2\ell-i) \bigg\} \\
		& -\zm(2\ell,2k) + \sum_{r=2}^{2k+2\ell-2} (-2)^{-r} \binom{r-1}{2k-1}  \zm(r) \zm(2k+2\ell-r) \\
		& - 2^{-2k-2\ell} \bigg\{ 2 \binom{2k+2\ell-2}{2k-1} + \binom{2k+2\ell-1}{2k-1} \bigg\} \zm(2k + 2\ell) \,.
		\end{aligned}
		\end{aligned}
		\end{equation}
	\end{prop}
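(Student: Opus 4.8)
The plan is to deduce the motivic identity \eqref{eqn:zaltevevmot} from its numerical counterpart \autoref{prop:zevbarevbar:eval} by means of Glanois' analogue of Brown's criterion (the theorem of Glanois recalled just above). Set $Z$ equal to the left-hand side minus the right-hand side of \eqref{eqn:zaltevevmot}, an element of $\mathcal{H}^{(2)}_{2k+2\ell}$. Since $2k+2\ell > 1$, it suffices to prove $\D_{2r+1}Z = 0$ for all $r$ with $1 \le 2r+1 < 2k+2\ell$, together with $\D_1 Z = 0$; Glanois' theorem then gives $Z = c\,\zm(2k+2\ell)$ for some $c \in \Q$, and applying the period map together with \autoref{prop:zevbarevbar:eval} yields $c\,\zeta(2k+2\ell) = 0$, whence $c = 0$ and $Z = 0$. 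The case $r = 0$ is immediate: the only weight-one irreducible of $\mathcal{L}^{(2)}$ is $\log^\mot 2 = \zm(\overline{1})$, and a direct inspection of \eqref{eqn:derivation} shows that none of the (classical, depth $\le 2$) summands of $Z$, nor $\zm(\overline{2\ell},\overline{2k})$, contributes it.

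\textbf{Computing $\D_{2r+1}$.} For $2r+1 \ge 3$ one evaluates $\D_{2r+1}$ on each summand of $Z$ via \eqref{eqn:derivation}. The classical pieces are handled by the formulas already recorded in the proof of \autoref{lem:eval:z2242prodmot}: $\D_{2r+1}$ annihilates a single zeta of weight $\ne 2r+1$, sends $\zm(a)\zm(b)$ to $\zl(2r+1)\otimes\zm(b)$ when $a = 2r+1$ (and symmetrically), and produces the stated binomial expression on $\zm(p,q)$. The one new ingredient is $\D_{2r+1}\zm(\overline{2\ell},\overline{2k})$, obtained by applying \eqref{eqn:derivation} directly to $\imot(0;1,\{0\}^{2\ell-1},-1,\{0\}^{2k-1};1)$. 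A short case analysis of which length-$(2r+1)$ sub-paths contribute --- using the axioms of \autoref{def:Hn} (path reversal, equal boundaries) to normalise the resulting iterated integrals, the shuffle-regularisation formula to rewrite the pieces carrying leading zeros, and the fact that a ``pure power'' sub-integral of the shape $\imot(-1;\{0\}^m;\ast)$ of weight $m \ge 2$ is decomposable and hence zero in $\mathcal{L}^{(2)}$ --- shows that $\D_{2r+1}\zm(\overline{2\ell},\overline{2k})$, like $\D_{2r+1}$ of every other term of $Z$, is a sum of tensors (single zeta of weight $2r+1$)$\otimes$(single zeta of weight $2k+2\ell-2r-1$). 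Since every odd-weight single, possibly alternating, zeta is a rational multiple of the corresponding classical Riemann value, both tensor slots become one-dimensional, and $\D_{2r+1}Z = 0$ reduces, for each $r$, to a single polynomial identity among binomial coefficients; these are elementary and are precisely of the type proved in \autoref{lem:binomial}.

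\textbf{Alternative packaging and the main obstacle.} Equivalently --- and more transparently --- one may observe that the whole derivation of \autoref{prop:zevbarevbar:eval} carried out in \autoref{sec:gendouble} uses only the shuffle product (motivic by definition), the stuffle product (motivic by \cite{RacinetThesis,SouderesDblSh}), the shuffle-regularisation formula (a formal, hence motivic, consequence), the dihedral symmetry \eqref{eqn:dih:1and2lbar} (already verified to be motivic), and the depth-$2$ generalised doubling identity of \cite[\S14.2.5]{ZhaoBook}. Running that derivation verbatim with $\zm$ in place of $\zeta$ therefore proves the proposition, \emph{provided} the generalised doubling identity itself lifts to $\mathcal{H}^{(2)}$ --- and that is exactly the $\D_{2r+1}$-computation of the previous paragraph, now applied to the difference of the two sides of the doubling identity (weight $s + t = 2k + 2\ell$). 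The main obstacle is thus this single verification: keeping precise track of the regularisation (the terms $\zm_{t-1}(1,\overline{s})$ and $\zm_{t-1}(s,1)$ carrying leading zeros) and of the pure-power sub-integrals that appear transiently in the coaction, and then confirming that the resulting family of binomial identities holds over the full range $3 \le 2r+1 \le s+t-3$. Everything else is either routine or already established in the excerpt.
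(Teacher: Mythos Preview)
Your proposal is correct and follows essentially the same approach as the paper: compute \( \D_{2r+1} \) of both sides (handling \( \D_1 \) separately by direct inspection), observe that everything lands in the one-dimensional span \( \zeta^\lmot(2r+1)\otimes\zm(2k+2\ell-2r-1) \), reduce to the binomial identities of \autoref{lem:binomial}, and then fix the remaining \( \zm(2k+2\ell) \)-coefficient via the period map and \autoref{prop:zevbarevbar:eval}. Your ``alternative packaging'' paragraph is also exactly the perspective the paper alludes to when it remarks that the generalised doubling identity lifts by direct calculation.
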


\begin{proof}
We compute \( \D_{2r+1} \) of both sides, and verify they agree for \(  1 \leq r \leq k+\ell-2 \).  The case \( r = 0 \) does not play a role, since \( \D_1 \) is known to be exactly zero by the Galois descent property established in \cite{GlanoisThesis16}; alternatively one can directly compute it and see there is no contribution since the sequences \( (0, 1, -1), (0, -1, 1), (-1,1,0), (1,-1,0) \) which give rise to \( \log^\lmot(2) \) are not present in the integral representation of any term.  In the case \( r = k + \ell-1 \), \( \D_{2k+2\ell-1} \) is quickly checked to vanish as only \( \zm(1) = 0 \) appears in the right hand tensor factor. \medskip

\paragraph{\bf Computation of \( \D_{2r+1} \zm(\overline{2a},\overline{2b}) \) and \( \D_{2r+1} \zm(2a, 2b) \)}  We see that only the following subsequences can contribute to the motivic coaction.  This is because any subsequence must start or end one of the three non-zero entries; one then checks whether the length \( 2r+1 \) subsequences which start/end at these points actually contribute.
\begin{center}
	\begin{tikzpicture}
	\matrix[name=M1, matrix of nodes, inner sep=1pt, column sep=0pt]{
		\node () [] {$\zm(\overline{2a},\overline{2b}) = I^\mot($};
		& \node (s1) [] {${\phantom{\mathllap{\fbox{$,0$}}}} 0$}; 
		& \node () [] {$;$}; 
		& \node (m1) [] {${\phantom{\mathllap{\fbox{$,0$}}}}  {}1$};  			& \node () [] {$,$};
		& \node (zm1) [] {${\phantom{\mathllap{\fbox{$,0$}}}} \{0\}^{2a-1} , $};
		& \node (m2) [] {${\phantom{\mathllap{\fbox{$,0$}}}}  {-}1$};  & \node () [] {$,$}; 
		& \node (zm2) [] {${\phantom{\mathllap{\fbox{$,0$}}}} \{0\}^{2b-1} , $};
		& \node (t1) [] {${\phantom{\mathllap{\fbox{$,0$}}}}  1$};  
		& \node () [] {$) \,.$}; 
		\\
	};
	\draw[thick,black] (zm1.north) to[bracket=5pt] node[above=-1mm] {$$} (t1.north);
	\draw[thick,black] (m1.south) to[ubracket=5pt] node[below=-1mm] {$$} (zm2.south);
	\draw[black,thin,solid] ($(m1.north west)+(0,0)$)  rectangle ($(m1.south east)+(0,0)$);
	\draw[black,thin,solid] ($(m2.north west)+(0,0)$)  rectangle ($(m2.south east)+(0,0)$);
	\draw[black,thin,solid] ($(t1.north west)+(0,0)$)  rectangle ($(t1.south east)+(0,0)$);
	\end{tikzpicture}
	\end{center}
	We find
	\begin{align*}
		&\D_{2r+1} \zm(\overline{2a}, \overline{2b}) \\
		& = -\delta_{a \leq r} \zeta_{2r + 1 - 2a}^\lmot(\overline{2a}) \otimes \zeta(2a + 2b - 2r - 1) + \delta_{b\leq r} \zeta_{2r + 1 - 2b}^\lmot(\overline{2b}) \otimes \zm(2a + 2b - 2r - 1) \\
		&= \bigg( \! \binom{2r}{2a-1} - \binom{2r}{2b-1} \! \bigg) \zeta^\lmot(\overline{2r+1}) \otimes \zeta(2a + 2b - 2r - 1)
	\end{align*}
	(The binomial factors should \emph{a prior} retain the delta factors, but they can be removed as the binomials vanish already for the complementary condition.)
	The corresponding result holds for \( \zm(2a, 2b) \) by removing all bars from the above result
	\begin{align*}
	&\D_{2r+1} \zm({2a}, {2b}) \\
	&= \bigg( \! \binom{2r}{2a-1} - \binom{2r}{2b-1} \! \bigg) \zeta^\lmot({2r+1}) \otimes \zeta(2a + 2b - 2r - 1)\,.
	\end{align*}

	\paragraph{\bf Computation of \( \D_{2r+1} \zm(2a+1, 2b+1) \)}  We see that only the following subsequences can contribute to the motivic coaction.  This is because any subsequence must involve one of the three non-zero entries; one then checks whether the length \( 2r+1 \) subsequences which start/end at these points actually contribute.
	\vspace{-0.5em}
	\begin{center}
		\begin{tikzpicture}
		\matrix[name=M1, matrix of nodes, inner sep=1pt, column sep=0pt]{
			\node () [] {$\zm({2a+1},{2b+1}) = I^\mot($};
			& \node (s1) [] {${\phantom{\mathllap{\fbox{$,0$}}}} 0$}; 
			& \node () [] {$;$}; 
			& \node (m1) [] {${\phantom{\mathllap{\fbox{$,0$}}}}  {}1$};  			& \node () [] {$,$};
			& \node (zm1) [] {${\phantom{\mathllap{\fbox{$,0$}}}} \{0\}^{2a} , $};
			& \node (m2) [] {${\phantom{\mathllap{\fbox{$,0$}}}}  1$};  & \node () [] {$,$}; 
			& \node (zm2) [] {${\phantom{\mathllap{\fbox{$,0$}}}} \{0\}^{2b} , $};
			& \node (t1) [] {${\phantom{\mathllap{\fbox{$,0$}}}}  1$};  
			& \node () [] {$) \,.$}; 
			\\
		};
		\draw[thick,black] (zm1.south) to[ubracket=12pt] node[below=-7mm] {$$} (t1.south);
		\draw[thick,black] (m1.south) to[ubracket=5pt] node[below=-1mm] {$$} (zm2.south);
		\draw[thick,black] (s1.north) to[bracket=5pt] node[above=-1mm] {$$} (m2.north);
		\draw[black,thin,solid] ($(m1.north west)+(0,0)$)  rectangle ($(m1.south east)+(0,0)$);
		\draw[black,thin,solid] ($(m2.north west)+(0,0)$)  rectangle ($(m2.south east)+(0,0)$);
		\draw[black,thin,solid] ($(t1.north west)+(0,0)$)  rectangle ($(t1.south east)+(0,0)$);
		\end{tikzpicture}
	\end{center}
	\vspace{-1em}
	We find
	\begin{align*}
	&\D_{2r+1} \zm({2a+1}, {2b+1}) \\
	& = \begin{aligned}[t]
	& \delta_{a=r} \zeta^\lmot(2r+1) \otimes \zm(2a + 2b + 1 - 2r) \\
	& + \Big( {-} \delta_{a\leq r} \zeta_{2r-2a}^\lmot(2a+1) + \delta_{b \leq r} \zeta_{2r-2b}^\lmot(2b+1) \Big) \otimes \zm(2a+2b+1-2r) \end{aligned} \\
	&= \bigg( \delta_{a=r} - \binom{2r}{2a} + \binom{2r}{2b} \! \bigg) \zeta^\lmot({2r+1}) \otimes \zeta(2a + 2b + 1 - 2r) \,.
	\end{align*}

\medskip
\paragraph{\bf Computation of \( \D_{2r+1} \zm(p, q) \), \( p + q \) even}

We note that the two cases above can be combined to give the following, for \( p + q \) even
\[
	\D_{2r+1} \zm(p,q) = \bigg( \delta_{2r+1 = p} + (-1)^p \binom{2r}{p-1} - (-1)^q \binom{2r}{q-1}\!  \bigg) \zeta^\lmot(2r+1) \otimes \zeta(p+q - 2r - 1) \,.
\]

\medskip
\paragraph{\bf Verification of \autoref{prop:zevbarevbar:mot}}

The claim that \( \D_{2r+1} \) of both sides agree is equivalent to the following putative identity amongst binomial coefficients, when \( 1 \leq r \leq k+\ell-1 \), which arises after projecting \( \zeta^\lmot(2r+1) \otimes \zm(2k+2\ell-2r-1) \mapsto 1 \).

\begin{align*}
	& 0 \overset{?}{=} (1-2^{-2r}) \bigg( \! \binom{2r}{2\ell-1} - \binom{2r}{2k-1} \! \bigg) \\
	& + \sum_{i=2}^{2k+2\ell-2} 2^{-i} \binom{i-1}{2k-1} \bigg( \delta_{2k+2\ell-i = 2r+1} + (-1)^i \binom{2r}{2k+2\ell-i-1} - (-1)^{i} \binom{2r}{i-1} \! \bigg) \\
	& + \sum_{i=2}^{2k+2\ell-2} 2^{-i} \binom{i-1}{2\ell-1} \bigg( \delta_{i = 2r+1} + (-1)^i \binom{2r}{i-1} - (-1)^{i} \binom{2r}{2k+2\ell-i-1}\!  \bigg) \\
	& - \bigg(\!  \binom{2r}{2l-1} - \binom{2r}{2k-1} \! \bigg) \\
	& + (-2)^{-(2r+1)} \binom{2r}{2k-1} + (-2)^{-(2k+2\ell-2r-1)} \binom{2k+2\ell-2r-2}{2k-1}\,.
\end{align*}

After some simplification of the right hand side, and reindexing the sums, we find that the claim is equivalent to the following
\begin{align*}
	0 \overset{?}{=} & -2^{-2r-1} \bigg( \!  \binom{2r}{2\ell-1} - \binom{2r}{2k-1} \!  \bigg) \\
	& + (-2)^{-2k} \sum_{i=0}^{2\ell-2} (-2)^{-i} \binom{i+2k-1}{2k-1} \bigg( \! \binom{2r}{2\ell-i-1} - \binom{2r}{2k+i-1} \! \bigg) \\
	& + (-2)^{-2\ell} \sum_{i=0}^{2k-2} (-2)^{-i} \binom{i+2k-1}{2\ell-1} \bigg( \! \binom{2r}{2\ell+i-1} - \binom{2r}{2k-i-1} \!  \bigg)\,.
\end{align*}

This is verified to be exactly 0 from the \autoref{lem:binomial} of \autoref{sec:proofs}.  With that, we have finished the proof of \autoref{prop:zevbarevbar:mot}.
\end{proof}

\subsection{\texorpdfstring{Motivic version of \protect\autoref{eqn:zs2242full2}}{Motivic version of Equation (\ref{eqn:zs2242full2})}}
\label{sec:zs2242dp3:mot}

We prove the following proposition which claims that \autoref{eqn:zs2242full2} lifts to a motivic version.

\begin{prop}\label{prop:zsasdp3:mot}
	The following identity holds amongst motivic multiple zeta (star) values.
	\begin{equation}\label{eqn:zs2242full2mot}
	\begin{aligned}[c]
	\phantom{(\{2\}^a, 4, \{2\}^b)}
	\mathllap{\zeta^{\mot,\star}(\{2\}^a, 4, \{2\}^b)} =  {}
	& \begin{aligned}[t]
	& 2 \zm(\overline{2a + 2b + 4}) 
	-4 (2a+1) \zm(\overline{2b+2}) \zm(2a+2) 
	\\
	& + 8 \zm(2a+1) \sum_{k=1}^{b+1} \zm(2k+1) \zm(\overline{2b+2-2k}) \\[-1ex]
	&  - 8 \sum_{s = 0}^{a} \zm(\overline{2s}) \zeta^{\mot}_{2a+1-2s}(1, \overline{2b+2}) - 8 \sum_{s = 0}^{b} \zm(\overline{2s}) \zeta^{\mot}_{2b+2-2s}(1, 2a+1) \,. \hspace{-1em}
	\end{aligned}		
	\end{aligned}
	\end{equation}	
\end{prop}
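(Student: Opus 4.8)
The plan is to verify \autoref{eqn:zs2242full2mot} through the infinitesimal coactions $\D_{2r+1}$, using Glanois' description of $\ker\D_{<N}$ for alternating motivic MZVs (Corollary 2.4.5 of \cite{GlanoisThesis16}, quoted above). Write $N = 2a+2b+4$ and let $Z \in \mathcal{H}^{(2)}_N$ be the difference of the two sides of \autoref{eqn:zs2242full2mot}. Since $N$ is even, $\ker\D_{<N}\cap\mathcal{H}^{(2)}_N = \Q\zm(N)$, so it suffices to prove $\D_{2r+1}Z = 0$ for every $r$ with $1 \le 2r+1 < N$; then $Z \in \Q\zm(N)$, and since the numerical identity \autoref{eqn:zs2242full2} gives $\per(Z) = 0$ while $\zeta(N)\neq 0$, we conclude $Z = 0$. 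The case $r = 0$ plays no role: one checks directly that none of the integral representations of the terms of \autoref{eqn:zs2242full2mot} contains a subsequence producing $\log^\mot 2$ (equivalently, this is the Galois-descent property of \cite{GlanoisThesis16}), so $\D_1 = 0$ on both sides; and the top case $2r+1 = N-1$ is vacuous, only $\zm(1) = 0$ surviving in the right-hand tensor factor.

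First I would assemble the $\D_{2r+1}$-computations on the left-hand side. Writing $\zm(\{2\}^a, 4, \{2\}^b) = (-1)^{a+b+1} I^\mot(0; (1,0)^a, 1,0,0,0, (1,0)^b; 1)$ and enumerating the length-$(2r+1)$ subsequences that can contribute to \autoref{eqn:derivation} — every subsequence lying inside an alternating stretch of the word has coinciding endpoints and hence vanishes, so essentially only those meeting the central block $1,0,0,0$ survive — one obtains
\[ (\id \otimes \pi) \D_{2r+1} \zm(\{2\}^a, 4, \{2\}^b) = -\delta_{r \le a}\, \zeta^\lmot_1(\{2\}^r) \otimes \zl(\{2\}^{a-r}, 3, \{2\}^b) + \delta_{r \le b}\, \zeta^\lmot_1(\{2\}^r) \otimes \zl(\{2\}^a, 3, \{2\}^{b-r}), \]
which is precisely the formula invoked in the proof of \autoref{lem:eval:z2242prodmot}; the same bookkeeping also establishes the auxiliary identity $(\id \otimes \pi)\D_{2r+1}\zm(p,q) = \big(\delta_{2r+1=p} + (-1)^p\binom{2r}{p-1} - (-1)^q\binom{2r}{q-1}\big)\zl(2r+1) \otimes \zl(p+q-2r-1)$ for $p+q$ even, used there. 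Feeding in Brown's evaluations $\zeta^\lmot_1(\{2\}^r) = 2(-1)^r \zl(2r+1)$ and his closed form for $\zl(\{2\}^\bullet,3,\{2\}^\bullet)$, together with the fact that each $\zm(\{2\}^n)$ is decomposable modulo $\zm(2)$, and applying the Leibniz rule to the (automatically motivic) stuffle-antipode identity \autoref{eqn:z2242aszs2242}, yields $\D_{2r+1}$ of the left-hand side of \autoref{eqn:zs2242full2mot} as an explicit tensor whose left factor is $\zl(2r+1)$.

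On the right-hand side I would compute $\D_{2r+1}$ term by term. The single-zeta products $\zm(\overline{N})$, $\zm(2a+2)\zm(\overline{2b+2})$ and each $\zm(2k+1)\zm(\overline{2b+2-2k})$ are decomposable modulo $\zm(2)$ (the even single zetas being rational multiples of powers of $\zm(2)$), so $\D_{2r+1}$ of them reduces by Leibniz to contributions of the odd single zetas $\zm(2a+1)$, $\zm(2k+1)$ alone; while $\D_{2r+1}\zeta^\mot_\ell(1, \overline{2b+2})$ and $\D_{2r+1}\zeta^\mot_\ell(1, 2a+1)$ are again read off the integral representations and, after applying the depth-two parity theorem in odd weight to the resulting pieces, collapse to tensors of the form $\zl(2r+1) \otimes (\text{classical double/single zeta})$. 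Matching the two sides — after using Brown's formula for $\zl(\{2\}^\bullet, 3, \{2\}^\bullet)$ and the parity theorem once more to express all right-hand tensor factors in a common basis of classical MZVs — reduces to an identity between binomial coefficients and powers of $2$, of exactly the shape of those in \autoref{lem:binomial}; the same residue/contour argument proving that lemma (or an elementary rearrangement) closes it. A wholly analogous but simpler calculation records the $\D_{2r+1}$-formulas for $\zm(\overline{2a},\overline{2b})$, $\zm(2a,2b)$ and $\zm(2a+1,2b+1)$ needed along the way, exactly as in the proof of \autoref{prop:zevbarevbar:mot}.

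The main obstacle is bookkeeping rather than anything conceptual: one must correctly enumerate the contributing subsequences for the several integral representations, keep the shuffle-regularisation consistent in the $\zeta^\mot_\ell(1,\bullet)$ terms so that their $\D_{2r+1}$ is evaluated correctly, track a considerable number of signs and binomial factors, and finally recognise the resulting coefficient identity as an instance of \autoref{lem:binomial}. None of the individual steps is deep, but there are many terms to handle.
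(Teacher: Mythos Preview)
Your approach is essentially the paper's: compute $\D_{2r+1}$ of both sides term by term from the integral representations and the Leibniz rule, match, and invoke Glanois' description of $\ker\D_{<N}$ together with the numerical identity \autoref{eqn:zs2242full2} to fix the remaining $\zm(N)$-multiple. Two places where you over-anticipate difficulty: the right-hand tensor factors of $\D_{2r+1}\zeta^\mot_\ell(1,\overline{2b+2})$ and $\D_{2r+1}\zeta^\mot_\ell(1,2a+1)$ are already regularised \emph{single} (alternating) zetas, so no parity theorem is needed, and the final comparison --- after passing (via the stuffle antipode once more) to the motivic evaluation of $\zeta^{\star,\mot}(\{2\}^\alpha,3,\{2\}^\beta)$ on the left and reindexing $s\mapsto a+b+1-s-r$ on the right --- matches termwise with no call to \autoref{lem:binomial} (that lemma is only used in the proof of \autoref{prop:zevbarevbar:mot}).
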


\begin{proof}
We compute \( \D_{2r+1} \) of both sides, and will show that they agree.  The analytic version of this identity, which is given in \autoref{eqn:zs2242full2} then fixes the remaining coefficient of \( \zm(2a+2b+4) \) (which here is expressed as a sum of two terms involving products of even zetas).  Notice that the only place \( \D_1 \) could contribute is from \( \zm_{2a+1}(1, \overline{2b+2}) \), but we will see momentarily that \( \D_1 = 0 \), hence we can take \( r > 0 \). \medskip

\paragraph{\bf Computation of \( \D_{2r+1} \zm_{2a+1}(1, \overline{2b+2}) \)}  We see that only the following subsequences can contribute to the motivic coaction.  This is because any subsequence must start or end at one of the three non-zero entries; one then checks whether the length \( 2r+1 \) subsequences which start/end at these points actually contribute.
\vspace{-0.5em}
\begin{center}
		\begin{tikzpicture}
\matrix[name=M1, matrix of nodes, inner sep=1pt, column sep=0pt]{
	\node () [] {$\zeta_{2a+1}(1,\overline{2b+2}) = I^\mot($};
	& \node (s1) [] {${\phantom{\mathllap{\fbox{$,0$}}}} 0$}; 
	& \node () [] {$;$}; 
	& \node (zm1) [] {${\phantom{\mathllap{\fbox{$,0$}}}} \{0\}^{2a+1} , $};
	& \node (m1) [] {${\phantom{\mathllap{\fbox{$,0$}}}}  {-}1$};  			& \node () [] {$,$};
	& \node (m2) [] {${\phantom{\mathllap{\fbox{$,0$}}}}  {-}1$};  & \node () [] {$,$}; 
	& \node (zm2) [] {${\phantom{\mathllap{\fbox{$,0$}}}} \{0\}^{2b+1} , $};
	& \node (t1) [] {${\phantom{\mathllap{\fbox{$,0$}}}}  1$};  
	& \node () [] {$) \,.$}; 
	\\
};
\draw[thick,black] (m1.north) to[bracket=5pt] node[above=-1mm] {$$} (zm2.north);
\draw[thick,black] (m2.south) to[ubracket=5pt] node[below=-1mm] {$$} (zm1.south);
\draw[black,thin,solid] ($(m1.north west)+(0,0)$)  rectangle ($(m1.south east)+(0,0)$);
\draw[black,thin,solid] ($(m2.north west)+(0,0)$)  rectangle ($(m2.south east)+(0,0)$);
\draw[black,thin,solid] ($(t1.north west)+(0,0)$)  rectangle ($(t1.south east)+(0,0)$);
\end{tikzpicture}
\end{center}
\vspace{-1em}
Hence we have
\begin{align*}
	& \D_{2r+1} \zm_{2a+1}(1,\overline{2b+2}) \\
	& {} = \delta_{r\leq a} \zeta^\lmot_{2r}(1) \otimes \zm_{2a+1-2r}(\overline{2b+2}) - \delta_{r \leq b} \zeta^\lmot_{2r}(1) \otimes \zm_{2a+1}(\overline{2b+2-2r}) \\
	& {} = \begin{aligned}[t]
	\zeta^\lmot(2r+1) \otimes \bigg( \! {-} \delta_{r\leq a} \binom{2a + 2b + 2 - 2r}{2b + 1} + \delta_{r \leq b} \binom{2a + 2b + 2 - 2r}{2a} \!\! \bigg) \\
	{} \cdot \zm(\overline{2a + 2b+ 3 - 2r}) \,. \hspace{-3em}
	\end{aligned}
\end{align*}
And in particular \( \D_1 = 0 \). \smallskip

\paragraph{\bf Computation of \( \D_{2r+1} \zm_{2b+2}(1, 2a+1) \)}  Similarly, only the following subsequences can contribute to the motivic coaction.
\vspace{-0.5em}
\begin{center}
	\begin{tikzpicture}
	\matrix[name=M1, matrix of nodes, inner sep=1pt, column sep=0pt]{
		\node () [] {$\zeta_{2b+2}(1,2a+1) = I^\mot($};
		& \node (s1) [] {${\phantom{\mathllap{\fbox{$,0$}}}} 0$}; 
		& \node () [] {$;$}; 
		& \node (zm1) [] {${\phantom{\mathllap{\fbox{$,0$}}}} \{0\}^{2b+2} , $};
		& \node (m1) [] {${\phantom{\mathllap{\fbox{$,0$}}}}  1$};  			& \node () [] {$,$};
		& \node (m2) [] {${\phantom{\mathllap{\fbox{$,0$}}}}  1$};  & \node () [] {$,$}; 
		& \node (zm2) [] {${\phantom{\mathllap{\fbox{$,0$}}}} \{0\}^{2a} , $};
		& \node (t1) [] {${\phantom{\mathllap{\fbox{$,0$}}}}  1$};  
		& \node () [] {$) \,.$}; 
		\\
	};
	\draw[thick,black] (m1.north) to[bracket=5pt] node[above=-1mm] {$$} (zm2.north);
	\draw[thick,black] (m2.south) to[ubracket=5pt] node[below=-1mm] {$$} (zm1.south);
	\draw[black,thin,solid] ($(m1.north west)+(0,0)$)  rectangle ($(m1.south east)+(0,0)$);
	\draw[black,thin,solid] ($(m2.north west)+(0,0)$)  rectangle ($(m2.south east)+(0,0)$);
	\draw[black,thin,solid] ($(t1.north west)+(0,0)$)  rectangle ($(t1.south east)+(0,0)$);
	\end{tikzpicture}
\end{center}\vspace{-1em}
Hence we have
\begin{align*}
& \D_{2r+1} \zm_{2b+2}(1,2a+1) \\
& {} = \delta_{r\leq b+1} \zeta^\lmot_{2r}(1) \otimes \zm_{2b+2-2r}(2a+1) - \delta_{r \leq a-1} \zeta^\lmot_{2r}(1) \otimes \zm_{2b+2}(2a+1-2r) \\
& {} = \begin{aligned}[t] \zeta^\lmot(2r+1) \otimes \bigg( \!{-} \delta_{r\leq b+1} \binom{2a+2b+2-2r}{2a + 1} \!+\! \delta_{r \leq a-1} \binom{2a+2b+2 - 2r}{2b + 1} \!\! \bigg) \\ 
{} \cdot \zm(2a + 2b+ 3 - 2r) \,.\hspace{-3em} \end{aligned}
\end{align*}

\medskip
\paragraph{\bf Computation of \( \D_{2r+1} \) of right hand side of \autoref{eqn:zs2242full2mot}}
With the above two computations of the motivic coaction on the double zeta values in \autoref{eqn:zs2242full2mot}, we can readily compute the rest of the coaction using the derivation property of \( \D_{2r+1} \), namely \( \D_{2r+1} X Y = (1 \otimes Y) \D_{2r+1}X + (1 \otimes X) \D_{2r+1} Y \), as well as the fact that \( \D_{2r+1} \zm(N) = \delta_{N=2r+1} \zeta^\lmot(N) \otimes 1 \). Note also the first two terms on the right hand side of \autoref{eqn:zs2242full2mot} are products of even zetas, and so do not contribute.  So we find
\begin{align*}
	 & \D_{2r+1} (\text{RHS \autoref{eqn:zs2242full2mot}}) = {} \\
	 & 8 \delta_{r=a} \zeta^\lmot(2r+1) \otimes \sum_{k=1}^{b+1} \zm(2k+1) \zm(\overline{2b+2-2k}) \\
	 & + 8 \delta_{r \leq b+1} \zeta^\lmot(2r+1) \otimes \zm(2a+1) \zm(\overline{2b + 2 - 2r}) \\
	 &  - 8 \sum_{s = 0}^{a}  \begin{aligned}[t] 
	 \zeta^\lmot(2r+1) \otimes \bigg( {-} 
	 \delta_{r\leq a-s} \binom{2a + 2b + 2 - 2r - 2s}{2b+1}
	  + & \delta_{r \leq b} \binom{2a + 2b + 2 - 2r - 2s}{2b + 1 - 2r} \! \bigg) \\ 
	 & \cdot \zm(\overline{2a - 2s + 2b + 3 - 2r}) \zm(\overline{2s}) \end{aligned} \\
	 & - 8 \sum_{s=0}^b \begin{aligned}[t]
	 	\zeta^\lmot(2r+1) \otimes \bigg( \delta_{r\leq b-s+1} \binom{2a + 2b + 2 - 2r - 2s}{2a} & - \delta_{r \leq a-1} \binom{2a + 2b + 2 - 2r - 2s}{2a - 2r} \! \bigg) \\ 
	 	& \cdot \zm(2a + 2b - 2s + 3 - 2r) \zm(\overline{2s})\,.
	 	\end{aligned}
\end{align*}

\medskip
\paragraph{\bf Computation of \( \D_{2r+1} \) of left hand side of \autoref{eqn:zs2242full2mot}}

We compute the derivation \( \D_{2r+1} \zeta^{\mot,\star}(\{2\}^a, 4, \{2\}^b) \) by first applying the stuffle antipode to obtain an expression involving only \( \zm(\{2\}^j, 4, \{2\}^i) \), which has a simpler coaction.

We see that only the following subsequences can contribute to the motivic coaction of \( \D_{2r+1} \zm(\{2\}^a, 4, \{2\}^b) \); all other subsequences will start and end at letters of the same parity.
\vspace{-1em}
\begin{center}
	\begin{tikzpicture}
	\matrix[name=M1, matrix of nodes, inner sep=1pt, column sep=0pt]{
		\node () [] {$\zm(\{2\}^a,4,\{2\}^b) = (-1)^{a+b+1} I^\mot($};
		& \node (s1) [] {${\phantom{\mathllap{\fbox{$,0$}}}} 0$}; 
		& \node () [] {$;$}; 
		& \node () [] {${\phantom{\mathllap{\fbox{$,0$}}}} \{\!$};
		& \node (zm1) [] {${\phantom{\mathllap{\fbox{$,0$}}}} 1\!$};
		& \node () [] {${\phantom{\mathllap{\fbox{$,0$}}}} ,0\}^{a} ,$};	
		& \node () [] {${\phantom{\mathllap{\fbox{$,0$}}}}  1, 0$};  & \node () [] {$,$}; 
		& \node (m1) [] {${\phantom{\mathllap{\fbox{$,0$}}}} 0$};   & \node () [] {$,$}; 
		& \node () [] {${\phantom{\mathllap{\fbox{$,0$}}}}  0$};  & \node () [] {$,$}; 
		& \node () [] {${\phantom{\mathllap{\fbox{$,0$}}}} \{\!$};
		& \node (zm2) [] {${\phantom{\mathllap{\fbox{$,0$}}}} 1\!$};
		& \node () [] {${\phantom{\mathllap{\fbox{$,0$}}}} ,0\}^{b},$};	
		& \node (t1) [] {${\phantom{\mathllap{\fbox{$,0$}}}}  1$};  
		& \node () [] {$) \,.$}; 
		\\
	};
	\draw[thick,black] (m1.north) to[bracket=5pt] node[above=-1mm] {$$} (zm2.north);
	\draw[thick,black] (m1.south) to[ubracket=5pt] node[below=-1mm] {$$} (zm1.south);
	\draw[black,thin,solid] ($(m1.north west)+(0,0)$)  rectangle ($(m1.south east)+(0,0)$);
	\end{tikzpicture}
\end{center}
\vspace{-1em}
Hence we have
\begin{align*}
	& \D_{2r+1} \zm(\{2\}^a, 4, \{2\}^b) = \\
	& - \delta_{r \leq a} \zeta^\lmot_1(\{2\}^r) \otimes \zm(\{2\}^{a-r}, 3, \{2\}^b)  + \delta_{r \leq b} \zeta^\lmot_1(\{2\}^r) \otimes \zm(\{2\}^a, 3, \{2\}^{b-r}) \,.
\end{align*}
Now with the stuffle antipode formula extracted from \autoref{eqn:gs:zs2242asz2242}, we compute
\begin{align*}
	& D_{2r+1 }\zeta^{\mot,\star}(\{2\}^a, 4, \{2\}^b) \\
	& = \sum_{i=0}^a \sum_{j=0}^b (-1)^{i+j} \D_{2r+1} \zm(\{2\}^j, 4, \{2\}^i) \Big( 1 \otimes \zeta^{\mot,\star}(\{2\}^{a-i}) \zeta^{\mot,\star}(\{2\}^{b-j}) \Big) \\
	&= \zeta^\lmot_1(\{2\}^r) \otimes \sum_{i=0}^a \sum_{j=0}^b (-1)^{i+j} \begin{aligned}[t] \big( - \delta_{r \leq j}\zm(\{2\}^{j-r}, 3, \{2\}^i)  + \delta_{r \leq i} \zm(\{2\}^j, 3, \{2\}^{i-r}) \big)  \\ {} \cdot \zeta^{\mot,\star}(\{2\}^{a-i}) \zeta^{\mot,\star}(\{2\}^{b-j})\,.
	\end{aligned}
\end{align*}
Here, we can either apply the motivic evaluation of \( \zm(\{2\}^\alpha, 3, \{2\}^\beta) \) established by Brown \cite{BrownMTM12}.  Alternatively we can apply the stuffle antipode again to rewrite the result instead involving \( \zeta^{\mot,\star}(\{2\}^\alpha, 3, \{2\}^\beta) \) and appeal to the motivic evaluation thereof, for a more direct formula.  (Glanois \cite{GlanoisThesis16} claims that the motivic evaluation of \( \zeta^{\mot,\star}(\{2\}^\alpha, 3, \{2\}^\beta) \) requires knowing exactly certain conjectural identities amongst so-called \( \zeta^{\star\star} \) values, however it seems that the stuffle antipode formula allows one to automatically transfer the \( \zeta^{\mot}(\{2\}^\alpha, 3, \{2\}^\beta) \) evaluation to a corresponding \( \zeta^{\mot,\star}(\{2\}^\beta, 3, \{2\}^\alpha) \) evaluation.)

After (separately) shifting \( i \) and \( j \) by \( r \), which gives the factor \( (-1)^r \) below (and taking care with the signs; use the correspondence \( j \leftrightarrow b \), \( i \rightarrow a \)), we find
\begin{align*}
& \D_{2r+1 }\zeta^{\mot,\star}(\{2\}^a, 4, \{2\}^b) \\
&= (-1)^{r} \zeta^\lmot_1(\{2\}^r) \otimes \big( \delta_{r\leq a} \zeta^{\mot,\star}(\{2\}^{a-r}, 3, \{2\}^{b}) - \delta_{r \leq b} \zeta^{\mot,\star}(\{2\}^a, 3,\{2\}^{b-r}) \big) \,.
\end{align*}
We note that this is essentially the same expression as one obtains with Glanois' setup involving the motivic coaction on \( \zeta^{\star} \) values, after applying the dihedral symmetries to simplify terms in the coalgebra on the left hand side.  One only needs to apply the result that \( (-1)^r  \zeta^\lmot_1(\{2\}^r) = 2 \zeta^\lmot(2r+1) = -\zeta^{\lmot,\star}_1(\{2\}^r) \), to obtain exactly the same formula.

Now apply the following motivic evaluations
\begin{align*}
	\zeta^\lmot_1(\{2\}^r) & = 2 (-1)^r \zeta^\lmot(2r+1)  \\
	\zeta^{\mot,\star}(\{2\}^a, 3, \{2\}^b) & = - 2 \! \sum_{s=1}^{a+b+1} \! \bigg[ \! \binom{2s}{2a} {-} \delta_{s=a} - (1 - 2^{-2s}) \binom{2s}{2b+1} \! \bigg] \zeta^{\star,\mot}(\{2\}^{a+b+1-s}) \zm(2s+1)
\end{align*}
along with
\(
	\zeta^{\star,\mot}(\{2\}^n) = -2 \zm(\overline{2n}) \,.
\)
We find
\begin{align*}
	& \D_{2r+1} \zeta^{\mot,\star}(\{2\}^a, 4, \{2\}^b) = \\
	& \begin{aligned}[t]
	&	 8 \zeta^\lmot(2r+1) \otimes \sum_{s=1}^{a+b+1-r} \begin{aligned}[t] \bigg[ \binom{2s}{2a-2r} - & \delta_{s=a-r} - (1-2^{-2s}) \binom{2s}{2b+1} \bigg] \\ & \cdot \zm(\overline{2a + 2b + 2 - 2s - 2r}) \zm(2s+1) \end{aligned} \\
	&	- 8 \zeta^\lmot(2r+1) \otimes \sum_{s=1}^{a+b+1-r} \begin{aligned}[t] \bigg[ \binom{2s}{2a} - & \delta_{s=a} - (1-2^{-2s}) \binom{2s}{2b-2r+1} \bigg] \\ & \cdot \zm(\overline{2a + 2b + 2 - 2s - 2r}) \zm(2s+1) \,. \end{aligned} \\
	\end{aligned}
\end{align*}

\medskip
\paragraph{\bf Comparison of left and right hand side of \autoref{eqn:zs2242full2mot}}
Firstly, make the change of variables \( s \mapsto a + b + 1 - s - r \) in the sums for \( \D_{2r+1} ( \text{RHS \autoref{eqn:zs2242full2mot}} ) \); after considering the cases in each resulting delta term -- and dropping terms \( \zm(1) = 0 \) by regularisation-- we find
\begin{align*}
& \D_{2r+1} (\text{RHS \autoref{eqn:zs2242full2mot}}) = {} \\
& 8 \delta_{r=a} \zeta^\lmot(2r+1) \otimes \sum_{k=1}^{b+1} \zm(2k+1) \zm(\overline{2b+2-2k}) \\
& + 8 \delta_{r \leq b+1} \zeta^\lmot(2r+1) \otimes \zm(2a+1) \zm(\overline{2b + 2 - 2r}) \\
&  - 8 \sum_{s = \max(1, b-r+1)}^{a+b+1-r}  \begin{aligned}[t] 
\zeta^\lmot(2r+1) \otimes \bigg( {-} \binom{2s}{2b+1} + & \binom{2s}{2b+1-2r} \bigg) \\ 
& \cdot \zm(\overline{2s+1}) \zm(\overline{2a + 2b + 2 - 2r - 2s)}) \end{aligned} \\
& - 8 \sum_{s=\max(1, a-r+1)}^{a+b+1-r} \begin{aligned}[t]
\zeta^\lmot(2r+1) \otimes \bigg( \binom{2s}{2a} + & \delta_{r=a} - \binom{2s}{2a-2r} \bigg) \\ 
& \cdot \zm(2s + 1) \zm(\overline{2a + 2b + 2 - 2r - 2s)}) \,.
\end{aligned}
\end{align*}
Note here that the two terms involving \( \delta_{r=a} \) cancel.  Then the sums over  \( s \) may be extended to start at \( s = 1 \).  The first sum needs no correction term, as the numerators of each binomial are strictly greater than the denominators in this case, however the term \( \binom{2s}{2a-2r} \) in the second sum needs to be corrected when \( s = a-r \) for \( a-r \geq 1 \).  We obtain
\begin{align*}
& \D_{2r+1} (\text{RHS \autoref{eqn:zs2242full2mot}}) = {} \\
& 8 \delta_{r \leq b+1} \zeta^\lmot(2r+1) \otimes \zm(2a+1) \zm(\overline{2b + 2 - 2r}) \\
& - 8 \delta_{r \leq a-1} \zeta^\lmot(2r+1) \otimes \zm(2a + 1 - 2r) \zm(\overline{2b+2}) \\
&  - 8 \sum_{s = 1}^{a+b+1-r}  \begin{aligned}[t] 
\zeta^\lmot(2r+1) \otimes \bigg( {-} \binom{2s}{2b+1} + & \binom{2s}{2b+1-2r} \! \bigg) \\ 
& \cdot \zm(\overline{2s+1}) \zm(\overline{2a + 2b + 2 - 2r - 2s)}) \end{aligned} \\
& - 8 \sum_{s=1}^{a+b+1-r} \begin{aligned}[t]
\zeta^\lmot(2r+1) \otimes \bigg( \! \binom{2s}{2a} & - \binom{2s}{2a-2r} \! \bigg) \\ 
& \cdot \zm(2s + 1) \zm(\overline{2a + 2b + 2 - 2r - 2s)}) 
\end{aligned}
\end{align*}
Finally write \( \zm(\overline{2s+1}) = -(1-2^{2s})\zm(2s+1) \).  It is now straightforward to check that \( \D_{2r+1} ( \text{LHS \autoref{eqn:zs2242full2mot}} ) = \D_{2r+1} ( \text{RHS \autoref{eqn:zs2242full2mot}} ) \); the two terms outside the sum for \( \D_{2r+1} ( \text{RHS \autoref{eqn:zs2242full2mot}} ) \) above correspond to the deltas terms in the expression for \( \D_{2r+1}( \text{LHS \autoref{eqn:zs2242full2mot}} ) \).
\medskip

This completes the proof of \autoref{prop:zsasdp3:mot}, and shows the reduction of \( \zeta^{\star}(\{2\}^a, 4, \{2\}^b) \) to depth 3 alternating MZVs is motivic.
\end{proof}

\subsection{\texorpdfstring{Motivic evaluation of \( \zeta^\lmot(\{2\}^a, 4, \{2\}^b) \)}{Motivic evaluation of zeta\textasciicircum{}l(\{2\}\textasciicircum{}a, 4, \{2\}\textasciicircum{}b)}}

Now that we have verified all of the ingredients for the evaluations of \( \zeta(\{2\}^a, 4, \{2\}^b) \) and \( \zeta^\star(\{2\}^a, 4, \{2\}^b) \) are motivic, we may conclude that the identities in \autoref{thm:eval:z2242} and \autoref{thm:eval:zs2242} hold for \( \zeta^{(\star)} \) replaced by their motivic counterparts, and \( \ii\pi \) replaced by \( \frac{1}{2} \mathbb{L}^\mot = (\ii\pi)^\mot \).

More importantly, the evaluation of \( \zeta(\{2\}^a, 4, \{2\}^b) \) modulo products from \autoref{cor:eval:z2242prod} is also motivic, and we obtain the result of \autoref{lem:eval:z2242prodmot} as an immediate corollary.

\begin{cor}\label{cor:eval:z2242prodmot}
	The following evaluation holds in the motivic coalgebra
	\begin{align*}
	&  \zeta^\lmot(\{2\}^a, 4, \{2\}^b) = \\
	& (-1)^{a+b} \begin{aligned}[t] \bigg\{  & {-} 4 \zeta^\lmot(2a+2, 2b+2) +  4 \zeta^\lmot(2b+1, 2a+3)
	\\[-1ex] & 
	+ \sum_{\crampedsubstack{i + j = 2a + 2b \\ i, j \geq 0 }} \bigg(  \frac{1}{2^i} \binom{i+1}{2a+1} + \frac{1}{2^j} \binom{j+1}{2b+1} \! \bigg) \zeta^\lmot(i+2, j+2) \bigg\} 
	 \,. \end{aligned}
	\end{align*}
\end{cor}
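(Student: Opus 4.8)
The plan is to upgrade the analytic identity of \autoref{cor:eval:z2242prod} to the motivic level and then project to the Lie coalgebra of indecomposables. Recall that \autoref{cor:eval:z2242prod} is obtained by extracting from the full evaluation \autoref{thm:eval:z2242} precisely those double zeta terms not carrying a power of $\pi$, and that \autoref{thm:eval:z2242} is itself assembled from the following ingredients: the stuffle-antipode identity \autoref{eqn:gs:zs2242asz2242} relating $\zeta(\{2\}^a,4,\{2\}^b)$ to $\zeta^\star(\{2\}^a,4,\{2\}^b)$; Zhao's generalised 2-1 Theorem \cite{ZhaoIdentity16} reducing the latter to a depth-$3$ alternating zeta-half value; the depth-$3$ parity theorem \cite{PanzerParity16}; the shuffle-regularisation formulae together with the dihedral symmetries \autoref{eqn:dih:1and2lbar} and \autoref{eqn:dih:1and2lp1}; and the generalised doubling identity of \autoref{prop:zevbarevbar:eval}.

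First I would observe that the shuffle and stuffle products are motivic (\cite{RacinetThesis,SouderesDblSh}), so any manipulation built purely out of them lifts automatically, as do the evaluations of $\zeta(\{2\}^n)$, $\zeta^\star(\{2\}^n)$ and $\zeta^\mot(\{2\}^\alpha,3,\{2\}^\beta)$ \cite{BrownMTM12}. It then remains to feed in the two motivic lifts established above, namely \autoref{prop:zsasdp3:mot} (that the reduction \autoref{eqn:zs2242full2} of $\zeta^\star(\{2\}^a,4,\{2\}^b)$ to depth-$3$ alternating MZVs is motivic) and \autoref{prop:zevbarevbar:mot} (the motivic Galois descent of $\zeta^\mot(\overline{2\ell},\overline{2k})$). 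Running the chain of identities of \autoref{sec:num2242} verbatim, with every step replaced by its motivic counterpart, produces the motivic analogue of \autoref{thm:eval:z2242}, hence of \autoref{cor:eval:z2242prod}: one gets $\zeta^\mot(\{2\}^a,4,\{2\}^b)$ equal to $(-1)^{a+b}$ times the displayed combination of motivic double zeta values, plus a linear combination of products (of motivic MZVs and of powers of $(\ii\pi)^\mot = \tfrac12\mathbb{L}^\mot$).

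Finally, applying the projection $\pi\colon\mathcal{H}\to\mathcal{A}\to\mathcal{L}$ annihilates every decomposable term, and since $\zeta^\lmot$ is by definition the image of $\zeta^\mot$ in $\mathcal{L}$, what survives is exactly the asserted identity. I expect the only real subtlety to be the one already dealt with in \autoref{sec:mot2242}: because the intermediate steps genuinely pass through alternating MZVs, motivicity must be checked using Glanois' analogue of Brown's $\ker\D_{<N}$ criterion — in particular one must verify that the $\D_1$-contribution vanishes in each relevant case, since $\log^\mot(2)$ is non-zero in that framework — rather than with Brown's criterion directly. Everything else is bookkeeping; alternatively, one may simply note that this statement coincides with \autoref{lem:eval:z2242prodmot}, for which a self-contained motivic proof via $\partial_{<w}$ and \autoref{prop:coaction:modprod} was given in \autoref{sec:proofs}.
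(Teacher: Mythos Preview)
Your proposal is correct and follows essentially the same approach as the paper: once the two non-automatic ingredients \autoref{prop:zsasdp3:mot} and \autoref{prop:zevbarevbar:mot} are established (and the shuffle/stuffle manipulations are noted to be motivic), the full evaluation of \autoref{thm:eval:z2242} lifts to motivic MZVs, and projecting to $\mathcal{L}$ kills the decomposable terms, leaving exactly the stated identity. The paper in fact says no more than this, presenting the corollary as an immediate consequence of the preceding motivic verifications and noting the equivalence with \autoref{lem:eval:z2242prodmot}.
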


\end{document}